\theoremstyle{plain}
\newtheorem{thm}{Theorem}[section]
\newtheorem{lma}[thm]{Lemma}
\newtheorem{cor}[thm]{Corollary}
\newtheorem{prop}[thm]{Proposition}
\newtheorem*{announceconj}{Conjecture}
\theoremstyle{definition}
\newtheorem{dfn}[thm]{Definition}
\newtheorem{cnstr}[thm]{Construction}
\theoremstyle{remark}
\newtheorem{rem}[thm]{Remark}
\newtheorem{ex}[thm]{Example}
\newtheorem{conv}[thm]{Convention}
\newtheorem{notn}[thm]{Notation}
\DeclareMathOperator{\Chow}{CH}
\DeclareMathOperator{\codim}{codim}
\DeclareMathOperator{\Cyc}{Z}
\DeclareMathOperator{\Db}{D^b}
\DeclareMathOperator{\Dperf}{D^{perf}}
\DeclareMathOperator{\Hom}{Hom}
\DeclareMathOperator{\id}{id}
\DeclareMathOperator{\im}{im}
\DeclareMathOperator{\Kzero}{K_0}
\DeclareMathOperator{\Min}{Min}
\DeclareMathOperator{\Mod}{\!-mod}
\DeclareMathOperator{\proj}{\!-proj}
\DeclareMathOperator{\Proj}{Proj}
\DeclareMathOperator{\Spc}{Spc}
\DeclareMathOperator{\stab}{\!-stab}
\DeclareMathOperator{\supp}{supp}
\begin{document}
	
\title{Intersection products for tensor triangular Chow groups}
\author{Sebastian Klein\thanks{The author acknowledges the support of the European Union for the ERC grant No.\ 257004-HHNcdMir.}}
\date{}

\maketitle

\begin{abstract}
We show that under favorable circumstances, one can construct an intersection product on the Chow groups of a tensor triangulated category $\mathcal{T}$ (as defined in \cite{balmerchow}) which generalizes the usual intersection product on a non-singular algebraic variety. Our construction depends on the choice of an algebraic model for $\mathcal{T}$ (a \emph{tensor Frobenius pair}), which has to satisfy a $\mathrm{K}$-theoretic regularity condition analogous to the Gersten conjecture from algebraic geometry. In this situation, we are able to prove an analogue of the Bloch formula and use it to define an intersection product similar to Grayson's construction from \cite{graysonproduct}. We then recover the usual intersection product on a non-singular algebraic variety assuming a $\mathrm{K}$-theoretic compatibility condition.
\end{abstract}

\tableofcontents

\pagestyle{headings}

\section{Introduction}

In \cite{balmerchow}, the Chow groups $\Chow^{\Delta}_p(\mathcal{T})$ of a tensor triangulated category $\mathcal{T}$ were introduced and it was shown in \cite{kleinchow} that they have a lot of desirable properties, in analogy with the situation in algebraic geometry. The intersection product, one of the most important operations on the Chow groups of a non-singular algebraic variety, however, did not have an analogue in the tensor triangular world yet. In this article, we give a construction that provides us --- under favorable circumstances --- with an intersection product for a tensor triangulated category $\mathcal{T}$, that is defined on groups $\prescript{}{\cap}\Chow^{\Delta}_p(\mathcal{T}) \subset \Chow^{\Delta}_p(\mathcal{T})$ (see Definition \ref{dfnaltchow2}). In the case that $\mathcal{T} = \Dperf(X)$ for a separated, non-singular scheme $X$ of finite type over a field, the groups $\prescript{}{\cap}\Chow^{\Delta}_p(\mathcal{T})$ coincide with $\Chow^{\Delta}_p(\mathcal{T})$ (see Lemma \ref{lmachoweq}) and thus recover the usual Chow groups $\Chow^p(X)$ by \cite{kleinchow}*{Theorem 3.2.6}. 

In order to define the intersection product, the category $\mathcal{T}$ should satisfy two conditions: Firstly, $\mathcal{T}$ should have an ``algebraic model'' in the sense that there should exist a tensor Frobenius pair $\mathbf{A}$ (see Definition \ref{tensorfrobdef}) with derived category $\mathcal{T}$. Following Schlichting \cite{schlichtingnegative}, the assumption that $\mathcal{T}$ has a Frobenius pair as a model gives us the tools of the higher and negative algebraic $\mathrm{K}$-theory of the model. They allow us to define $\mathrm{K}$-theory sheaves $\mathscr{K}^{(0)}_p$ on the spectrum $\Spc(\mathcal{T})$ (see Definition \ref{ksheafdef}). Our second (and more restrictive) assumption concerns the behavior of a localization sequence arising from the $\mathrm{K}$-theory of the Frobenius models associated to certain sub-quotients of $\mathcal{T}$, and states that an analogue of the Gersten conjecture from algebraic geometry should hold (see Definition~\ref{gersten}). Under this assumption, we can construct a partial flasque resolution of the sheaf $\mathscr{K}^{(0)}_p$ and calculate
\[\mathrm{H}^{p}(\Spc(\mathcal{T}),\mathscr{K}^{(0)}_p) \cong \prescript{}{\cap}\Chow^{\Delta}_p(\mathcal{T})\]
(see Theorem \ref{bloch}), a result analogous to the usual Bloch formula from algebraic geometry. We combine this identification with the cup product from sheaf cohomology and a map
\[\mathscr{K}^{(0)}_p \otimes \mathscr{K}^{(0)}_q \to \mathscr{K}^{(0)}_{p+q} \]
induced by the product in Waldhausen $\mathrm{K}$-theory to obtain an intersection product
\[\prescript{}{\cap}\Chow^{\Delta}_p(\mathcal{T}) \otimes \prescript{}{\cap}\Chow^{\Delta}_q(\mathcal{T}) \to \prescript{}{\cap}\Chow^{\Delta}_{p+q}(\mathcal{T}) \]
(see Definition \ref{productdef}). Using the identification $\prescript{}{\cap}\Chow^{\Delta}_p(\Dperf(X)) = \Chow^p(X)$ for a separated, non-singular scheme $X$ of finite type over a field, we show that the product coincides with the usual intersection product on $X$ (see \ref{thmprodagrees}), by recurring to a similar theorem of Grayson (see \cite{graysonproduct}). For the proof we need to assume a compatibility condition between the products in the Waldhausen and Quillen $\mathrm{K}$-theory of a certain exact category.

The appendix discusses the countable envelope of a tensor Frobenius pair, a construction that extends work of Keller \cite{kellerchain}*{Appendix B} and Schlichting \cite{schlichtingnegative}*{Section 4} and that we need in order to obtain an algebraic model for the idempotent completion of a tensor triangulated category (see Section \ref{sectmodic}).

\textbf{Acknowledgements:} The results in this paper are all taken from the author's Ph.D.\ thesis, which was written at Utrecht University and jointly supervised by Paul Balmer and Gunther Cornelissen. The author would like to thank both of them for their support. The members of the thesis committee also pointed out some corrections and suggested some improvements, for which the author is grateful.

\section{Preliminaries}
\subsection{Tensor triangular geometry}
Let us recall some basics of tensor triangulated categories and \emph{tensor triangular geometry}, our main tool to study them.
\begin{dfn}[see \cite{balmer2010tensor}*{Definition 3}] 
	A tensor triangulated category is an essentially small triangulated category $\mathcal{T}$ endowed with a compatible symmetric monoidal structure. That is, there is a bifunctor
	\[\otimes: \mathcal{T} \times \mathcal{T} \to \mathcal{T}\]
	and a unit object $\mathbb{I}$, together with associator, unitor and commutator isomorphisms: 
	for all objects $X,Y,Z$ in $\mathcal{T}$, we have natural isomorphisms 
	\[X \otimes (Y \otimes Z) \cong (X \otimes Y) \otimes Z, \quad X \otimes \mathbb{I} \cong X \cong \mathbb{I} \otimes X, \quad X \otimes Y \cong Y \otimes X\]
	that satisfy the coherence conditions of \cite{maclanecwm}*{Section XI.1} to make $\mathcal{T}$ a symmetric monoidal category. Furthermore, the bifunctor $\otimes$ is required to be exact in each variable.
	\label{defttcat}
\end{dfn}

\begin{dfn}
	A triangulated category $\mathcal{T}$ is called \emph{idempotent complete} if all idempotent endomorphisms in $\mathcal{T}$ split. A full subcategory $\mathcal{J} \subset \mathcal{T}$ is called \emph{dense} if every object of $\mathcal{T}$ is a direct summand of an object of $\mathcal{J}$.
\end{dfn}
Recall that we can always embed a (tensor) triangulated category $\mathcal{T}$ into its \emph{idempotent completion} $\mathcal{T}^{\natural}$ as a dense subcategory. The category $\mathcal{T}^{\natural}$ is idempotent complete and naturally (tensor) triangulated, and the essential image of $\mathcal{T}$ in $\mathcal{T}^{\natural}$ is dense (see \cite{balschlichidem} and \cite{balmer2005spectrum}*{Remark 3.12}). Idempotent completions are functorial and if $\mathcal{J} \hookrightarrow \mathcal{T}$ is the inclusion of a full dense subcategory, then the induced functor $\mathcal{J}^{\natural} \hookrightarrow \mathcal{T}^{\natural}$ is an equivalence. 

The starting point for studying tensor triangulated categories geometrically is the \emph{spectrum} of a tensor triangulated category, whose construction we briefly describe next.
\begin{dfn} 
	Let $\mathcal{T}$ be a tensor triangulated category. A thick triangulated subcategory $\mathcal{J} \subset \mathcal{T}$ is called
	\begin{itemize}
		\item \emph{$\otimes$-ideal} if $\mathcal{T} \otimes \mathcal{J} \subset \mathcal{J}$.
		\item \emph{prime} if $\mathcal{J}$ is a proper $\otimes$-ideal ($\mathcal{J} \neq \mathcal{T}$) and $A \otimes B \in \mathcal{J}$ implies $A \in \mathcal{J}$ or $B \in \mathcal{J}$ for all objects $A,B \in \mathcal{T}$.
	\end{itemize}
\end{dfn}

\begin{dfn}[see \cite{balmer2005spectrum}] 
	Let $\mathcal{T}$ be an essentially small tensor triangulated category. The \emph{spectrum} of $\mathcal{T}$ is the set
	\[\Spc(\mathcal{T}) := \lbrace \mathcal{P} \subset \mathcal{T}: \mathcal{P}~\text{is a prime ideal} \rbrace\]
	topologized by the basis of closed sets of the form
	\[\supp(A) := \lbrace \mathcal{P} \in \Spc(\mathcal{T}): A \notin \mathcal{P} \rbrace\]
	for objects $A \in \mathcal{T}$. The set $\supp(A)$ is called the \emph{support of $A$}.
\end{dfn}

The spectrum $\Spc(\mathcal{T})$ is well-behaved: it is always a spectral topological space and it behaves (contravariantly) functorially with respect to the class of $\otimes$-exact functors (see e.g.\ \cite{balmer2010tensor}). Let us give some computations of $\Spc(\mathcal{T})$ for the purpose of illustration.

\begin{ex}
	The idempotent completion $\mathcal{T}^{\natural}$ is naturally a tensor triangulated category and the map $\Spc(\mathcal{T}^{\natural}) \to \Spc(\mathcal{T})$ induced by the inclusion $\mathcal{T} \hookrightarrow \mathcal{T}^{\natural}$ is a homeomorphism (see \cite{balmer2005spectrum}*{Corollary~3.14}).
	\label{exidcomptt}
\end{ex}

\begin{ex}[see \cite{balmer2005spectrum}, \cite{bkssupport}*{Theorem 9.5}]
	Let $X$ be a quasi-compact, quasi-sep\-a\-rated scheme and let $\Dperf(X)$ denote the derived category of perfect complexes on $X$. This is a tensor triangulated category with tensor product $\otimes^{\mathrm{L}}$. We have $\Spc(\Dperf(X)) \cong X$ and moreover, the support $\supp(A^{\bullet})$ of a complex $A^{\bullet} \in \Dperf(X)$ coincides with the support of the total homology sheaf $\bigoplus_i \mathrm{H}^i(A^{\bullet})$ on $X$ under this isomorphism. The proof of the statement uses Thomason's classification result from \cite{thomasonclassification}.
	\label{exschemereconstruct}
\end{ex}

\begin{ex}[see \cite{balmer2005spectrum}*{Corollary 5.10}]
	Let $G$ be a finite group and $k$ be a field such that $\mathrm{char}(k)$ divides the order of $G$. Let $kG\stab$ denote the stable module category, i.e.\ the category of finitely generated $kG$-left modules with $\Hom_{kG\stab}(M,N) = \Hom_{kG\Mod}(M,N)/\mathcal{J}$, where $\mathcal{J}$ denotes the subgroup of morphisms that factor through a projective module. This is a tensor triangulated category with tensor product $\otimes_k$. We have $\Spc(kG\stab) \cong \mathcal{V}_G(k)$, the \emph{projective support variety} of $k$. The variety $\mathcal{V}_G(k)$ is defined as $\Proj(\mathrm{H}^*(G,k))$, where $\mathrm{H}^*(G,k)$ denotes the cohomology ring of $G$ over $k$. The support $\supp(M)$ of a module $M \in kG\stab$ coincides with the cohomological support of $M$ in $\mathcal{V}_G(k)$ under this isomorphism. The proof of the statement uses the classification of thick $\otimes$-ideals in $kG\stab$ from \cite{bencarlrick}.
	\label{exstablemodcat}
\end{ex}

The spectrum of a tensor triangulated category $\mathcal{T}$ gives us the possibility to assign to each object of $\mathcal{T}$ the dimension of its support.
\begin{dfn}[see \cite{balmerfiltrations}] 
	A \emph{dimension function} on $\mathcal{T}$ is a map 
	\[\dim: \Spc(\mathcal{T}) \to \mathbb{Z} \cup \lbrace \pm \infty \rbrace\]
	such that the following two conditions hold:
	\begin{enumerate}
		\item If $\mathcal{Q} \subset \mathcal{P}$ are prime tensor ideals of $\mathcal{T}$, then~$\dim(\mathcal{Q}) \leq \dim(\mathcal{P})$.
		\item If $\mathcal{Q} \subset \mathcal{P}$ and $\dim(\mathcal{Q}) = \dim(\mathcal{P}) \in \mathbb{Z}$, then~$\mathcal{Q} = \mathcal{P}$.
	\end{enumerate}
	For a subset $V \subset \Spc(\mathcal{T})$, we define $\dim(V) := \sup \lbrace \dim(\mathcal{P}) | \mathcal{P} \in V \rbrace$. For every $p \in \mathbb{Z} \cup \lbrace \pm \infty \rbrace$, we define the full subcategory
	\[\mathcal{T}_{(p)} := \lbrace a \in \mathcal{T} : \dim(\supp(a)) \leq p \rbrace ~.\]
	We denote by $\Spc(\mathcal{T})_p$ the set of points $\mathcal{Q}$ of $\Spc(\mathcal{T})$ such that $\dim(\mathcal{Q}) = p$.
	\label{dimfuncdef}
\end{dfn}

\begin{rem}
	From the properties of $\supp(-)$, it follows that $\mathcal{T}_{(p)}$ is a thick tensor ideal in~$\mathcal{T}$. 
\end{rem}

\begin{ex}
	The main examples of dimension functions we will consider are the Krull dimension and the opposite of the Krull co-dimension. For $\mathcal{P} \in \Spc(\mathcal{T})$, its \emph{Krull dimension} $\dim_{\mathrm{Krull}}(\mathcal{P})$ is the maximal length $n$ of a chain of irreducible closed subsets
	\[\emptyset \subsetneq C_0 \subsetneq C_1 \subsetneq \ldots \subsetneq C_n = \overline{\lbrace \mathcal{P} \rbrace}. \]
	Dually, we define the \emph{opposite of the Krull co-dimension} 
	\[-\codim_{\mathrm{Krull}}(\mathcal{P})\] 
	as follows: if we have a chain of irreducible closed subsets of maximal length
	\[ \overline{\lbrace \mathcal{P} \rbrace} = C_0 \subsetneq C_1 \ldots \subsetneq C_n = \text{ maximal irred.~comp.~of $\Spc(\mathcal{T})$ containing $\mathcal{P}$}\]
	we set 
	\[-\codim_{\mathrm{Krull}}(\mathcal{P}) = -n~.\]
	\label{dimfuncex}
\end{ex}

A dimension function determines a filtration of $\mathcal{T}$. We have a chain of $\otimes$-ideals 
\[\mathcal{T}_{(-\infty)} \subset \cdots \subset \mathcal{T}_{(p)} \subset \mathcal{T}_{(p+1)} \subset \cdots \subset \mathcal{T}_{(\infty)} = \mathcal{T}~.\]
The sub-quotients of this filtration have a local description which we will recall next. First, let us introduce another useful property of tensor triangulated categories.
\begin{dfn}[see \cite{balmer2010tensor}*{Definition 20}]
	A tensor triangulated category $\mathcal{T}$ is called \emph{rigid} if there is an exact functor $D: \mathcal{T}^{\mathrm{op}} \to \mathcal{T}$ and a natural isomorphism $\Hom_{\mathcal{T}}(a \otimes b, c) \cong \Hom_{\mathcal{T}}(b, D(a) \otimes c)$ for all objects $a,b,c \in \mathcal{T}$. The object $D(a)$ is called the \emph{dual} of $a$.
	\label{dfnrigidttcat}
\end{dfn}

\begin{ex}
	Both categories $\Dperf(X)$ and $kG\stab$ (see Examples \ref{exschemereconstruct} and \ref{exstablemodcat}) are rigid. In these cases, the functor $D$ is given by $\mathrm{R}\mathcal{H}\!om(-,\mathcal{O}_X)$ and $\Hom_{k}(-,k)$ respectively.
\end{ex}

\begin{rem}
	From the natural isomorphism
	\[\Hom_{\mathcal{T}}(a \otimes b, c) \cong \Hom_{\mathcal{T}}(b, D(a) \otimes c)\]
	of Definition \ref{dfnrigidttcat}, it follows that  $a \otimes -$ and $D(a) \otimes -$ form an adjoint pair of functors for all objects $a \in \mathcal{T}$.
	\label{remrigidadjoint}
\end{rem}

We now fix a dimension function on a rigid tensor triangulated category $\mathcal{T}$ and look at the sub-quotients of the induced filtration. They have a local description.
\begin{thm}[see \cite{balmerfiltrations}*{Theorem 3.24}] 
	Let $\mathcal{T}$ be a rigid tensor triangulated category equipped with a dimension function $\dim$ such that $\Spc(\mathcal{T})$ is a noetherian topological space. Then, for all $p \in \mathbb{Z}$, there is an exact equivalence
	\[\left(\mathcal{T}_{(p)}/\mathcal{T}_{(p-1)}\right)^{\natural} \to \coprod_{\substack{P \in \Spc(\mathcal{T}) \\ \dim(P) = p}} \Min(\mathcal{T}_P)~.\]
	where $\mathcal{T}_P:= (\mathcal{T}/P)^{\natural}$ and $\Min(\mathcal{T}_P)$ denotes the full triangulated subcategory of objects with support the unique closed point of $\Spc(\mathcal{T}_P)$.
	\label{thmfiltdecomp}
\end{thm}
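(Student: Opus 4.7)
The plan is to build the equivalence by combining, for each prime $P \in \Spc(\mathcal{T})$ with $\dim(P) = p$, the localization functor $\mathcal{T} \to \mathcal{T}_P = (\mathcal{T}/P)^{\natural}$, restricted to the thick tensor ideal $\mathcal{T}_{(p)}$. First I would verify that this restriction lands in $\Min(\mathcal{T}_P)$: for $a \in \mathcal{T}_{(p)}$, the support of its image in $\Spc(\mathcal{T}_P) = \{Q \in \Spc(\mathcal{T}) : Q \supseteq P\}$ equals $\supp(a) \cap \{Q : Q \supseteq P\}$, and any $Q \supsetneq P$ must have $\dim(Q) > p$ by conditions~(1) and~(2) of Definition~\ref{dimfuncdef}, so this support is contained in the closed point $\{P\}$. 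Moreover, for $a \in \mathcal{T}_{(p-1)}$ we have $P \notin \supp(a)$, hence $a \in P$ and the image vanishes, so the composition descends to a functor $\mathcal{T}_{(p)}/\mathcal{T}_{(p-1)} \to \Min(\mathcal{T}_P)$.

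Next I would assemble these into a single functor $F$ taking values in the coproduct rather than the full product. This requires that each $a \in \mathcal{T}_{(p)}$ is sent to a zero object in all but finitely many $\Min(\mathcal{T}_P)$, and this is exactly where the noetherian hypothesis enters: the closed set $\supp(a) \subseteq \Spc(\mathcal{T})$ has only finitely many maximal points under specialization (the generic points of its irreducible components), and any $P \in \supp(a)$ with $\dim(P) = p$ must be such a maximal point, since any strictly larger prime would have dimension $> p$ and hence lie outside the set $\{Q : \dim(Q) \leq p\}$ that contains $\supp(a)$. Passing to idempotent completion on the source --- the coproduct on the right is already idempotent-complete --- yields the candidate functor $F^{\natural}$.

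The main work lies in showing $F^{\natural}$ is an equivalence. For essential surjectivity, any object of $\Min(\mathcal{T}_P)$ is a summand of the image of some $c \in \mathcal{T}$, and I would modify $c$ to lie in $\mathcal{T}_{(p)}$ without changing its image in $\mathcal{T}_P$ up to summands by tensoring with an auxiliary object whose support is a suitable closed neighborhood of $\overline{\{P\}}$ inside $\{Q : \dim(Q) \leq p\}$, whose existence is guaranteed by rigidity. For fullness and faithfulness, morphisms in $\mathcal{T}_{(p)}/\mathcal{T}_{(p-1)}$ and in each $\mathcal{T}_P$ both admit filtered colimit descriptions over fractions whose denominators are supported in strictly smaller closed subsets, and rigidity (Remark~\ref{remrigidadjoint}) lets one convert the relevant $\Hom$-groups into tensor expressions that decompose across the finitely many dimension-$p$ points of $\supp(a)$. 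I expect the matching of these two colimit systems --- and in particular the verification that no ``off-diagonal'' contribution between distinct dimension-$p$ primes survives after idempotent completion --- to be the main technical obstacle.
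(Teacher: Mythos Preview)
The paper does not prove this theorem; it is quoted from \cite{balmerfiltrations}*{Theorem 3.24}. The only indication of the argument is the remark immediately following the statement, which records that the equivalence is induced by the functor
\[
a \longmapsto (Q_{\mathcal{P}}(a))_{\mathcal{P}}, \qquad Q_{\mathcal{P}}: \mathcal{T} \to \mathcal{T}/\mathcal{P},
\]
and that Balmer shows this functor has dense image, so that idempotent completion on both sides yields the equivalence.

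Your construction of the functor is exactly this one, and your verifications that it lands in $\Min(\mathcal{T}_P)$, kills $\mathcal{T}_{(p-1)}$, and takes values in the coproduct (via the noetherian hypothesis) are correct and more detailed than anything the present paper records. The one organizational difference is that the cited argument proceeds by showing the functor $\mathcal{T}_{(p)}/\mathcal{T}_{(p-1)} \to \coprod_P \Min(\mathcal{T}/\mathcal{P})$ is fully faithful with \emph{dense} image before completing, whereas you aim directly for essential surjectivity and full faithfulness after completing the source. These are equivalent formulations; the ``dense image'' phrasing has the mild advantage that density is often easier to check than essential surjectivity onto an idempotent-completed target, and it automatically handles the off-diagonal vanishing you flag as a concern. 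Your plan for essential surjectivity (tensoring with an object supported near $\overline{\{P\}}$, using rigidity) is indeed the mechanism Balmer uses.
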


\begin{rem}
	The exact equivalence of Theorem \ref{thmfiltdecomp} is induced by the functor.
	\begin{align*}
	\mathcal{T}_{(p)}/\mathcal{T}_{(p-1)} &\to \coprod_{\substack{\mathcal{P} \in \Spc(\mathcal{T}) \\ \dim(\mathcal{P}) = p}} \Min(\mathcal{T}/\mathcal{P})\\
	a &\mapsto (Q_{\mathcal{P}}(a)) 
	\end{align*}
	where $Q_{\mathcal{P}}$ is the localization functor $\mathcal{T} \to \mathcal{T}/\mathcal{P}$. It is shown in \cite{balmerfiltrations} that the image of this functor is dense, so it induces an equivalence after idempotent completion on both sides.
\end{rem}

\begin{cnstr}[Dimension functions and localizations]
	Assume we are given a rigid tensor triangulated category $\mathcal{T}$ equipped with a dimension function $\dim$ and a thick $\otimes$-ideal $I \subset \mathcal{T}$. By the classification of thick $\otimes$-ideals in $\mathcal{T}$ (see \cite{balmer2005spectrum}*{Theorem 4.10}), there exists a Thomason subset $Z \subset \Spc(\mathcal{T})$ (i.e.\ $Z$ can be written as a union of closed subsets with quasi-compact complement) such that
	\[I=\mathcal{T}_Z = \lbrace a \in \mathcal{T} | \supp(a) \subset Z\rbrace ~.\]
	Let $q: \mathcal{T} \to \mathcal{T}/\mathcal{T}_Z$ be the Verdier localization functor and denote by $U$ the complement of $Z$ in $\Spc(\mathcal{T})$. By \cite{balmer2005spectrum}*{Proposition 3.11}, $q$ induces a homeomorphism 
	\[\Spc(q): \Spc(\mathcal{T}/\mathcal{T}_Z) \xlongrightarrow{\sim} U \subset \Spc(\mathcal{T})\]
	which we use to define an induced dimension function $\dim|_U$ on $\mathcal{T}/\mathcal{T}_Z$ by setting
	\[\dim|_U(\mathcal{P}) := \dim(\Spc(q)(\mathcal{P}))\]
	for $\mathcal{P} \in \Spc(\mathcal{T}/\mathcal{T}_Z)$.
	\label{constrresdim}
\end{cnstr}

The dimension function $\dim|_U$ enables us to also filter the quotient $\mathcal{T}/\mathcal{T}_Z$ by dimension of support. Let us prove that the quotient functor $q$ is compatible with it.
\begin{lma}
	Let  $\mathcal{T}$ be a rigid tensor triangulated category equipped with a dimension function $\dim$ and let $\mathcal{T}_Z\subset \mathcal{T}$ be a thick tensor ideal as in Construction \ref{constrresdim}. Equip $\mathcal{T}/\mathcal{T}_Z$ with the dimension function $\dim_U$ and let $q: \mathcal{T} \to \mathcal{T}/\mathcal{T}_Z$ denote the localization functor. Then $q(\mathcal{T}_{(p)}) \subset (\mathcal{T}/\mathcal{T}_Z)_{(p)}$ for all $p \in \mathbb{Z}$. (In the terminology of \cite{kleinchow}, the functor $q$ \emph{has relative dimension 0}).
	\label{lmareshasreldim0}
\end{lma}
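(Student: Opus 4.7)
The plan is to unravel the definitions and reduce the claim to a straightforward comparison of supports via the homeomorphism $\Spc(q)$.

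First I would fix an object $a \in \mathcal{T}_{(p)}$, so that $\dim(\supp(a)) \leq p$ by definition, and aim to show that $\dim|_U(\supp(q(a))) \leq p$. The key intermediate step is to identify $\supp(q(a)) \subset \Spc(\mathcal{T}/\mathcal{T}_Z)$ with $\supp(a) \cap U \subset \Spc(\mathcal{T})$ under the homeomorphism $\Spc(q)$ from Construction \ref{constrresdim}. This is essentially immediate from the fact that $\Spc(q)$ sends a prime $\mathcal{P} \subset \mathcal{T}/\mathcal{T}_Z$ to $q^{-1}(\mathcal{P}) \in \Spc(\mathcal{T})$, combined with the fact that $q(a) \notin \mathcal{P}$ if and only if $a \notin q^{-1}(\mathcal{P})$.

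Once this identification is in place, I would just chase the definitions: by definition of $\dim|_U$,
\[\dim|_U(\supp(q(a))) = \sup\{\dim(\Spc(q)(\mathcal{P})) : \mathcal{P} \in \supp(q(a))\} = \sup\{\dim(\mathcal{P}') : \mathcal{P}' \in \supp(a) \cap U\}.\]
Since $\supp(a) \cap U \subset \supp(a)$, the last supremum is bounded above by $\dim(\supp(a)) \leq p$, which gives $q(a) \in (\mathcal{T}/\mathcal{T}_Z)_{(p)}$ as required.

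There is no real obstacle here; the only step that requires any care is verifying the support identification $\Spc(q)(\supp(q(a))) = \supp(a) \cap U$, but this is a standard consequence of how $\Spc$ and $\supp$ behave under Verdier localization (as already used implicitly in the construction of $\dim|_U$). The rest is a formal comparison of suprema.
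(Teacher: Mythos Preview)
Your proposal is correct and follows essentially the same approach as the paper's proof: both identify $\supp(q(a))$ with $\supp(a)\cap U$ via $\Spc(q)$ (the paper cites \cite{balmer2005spectrum}*{Proposition 3.6} for this, you argue it directly), and then bound $\dim|_U(\supp(q(a)))=\dim(\supp(a)\cap U)\leq\dim(\supp(a))\leq p$.
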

\begin{proof}
	Let $a$ be an object in $\mathcal{T}_{(p)}$. Then we have
	\[\dim|_U(\supp(q(a))) = \dim|_U(\supp(a) \cap U) = \dim(\supp(a) \cap U) \leq \dim(\supp(a)) \leq p\]
	where the first equality follows from \cite{balmer2005spectrum}*{Proposition 3.6} as $\Spc(q)$ is the inclusion~$U \hookrightarrow \Spc(\mathcal{T})$.
\end{proof}

The dimension function$-\codim_{\mathrm{Krull}}$ is well-behaved with respect to restriction:
\begin{lma}
	Let $\mathcal{T}$ be a rigid tensor triangulated category equipped with the dimension function $-\codim^{\mathcal{T}}_{\mathrm{Krull}}$ and $\mathcal{T}_Z\subset \mathcal{T}$ be a thick tensor ideal as in Construction \ref{constrresdim}. Then the dimension functions $-\codim^{\mathcal{T}}_{\mathrm{Krull}}|_U$ and $-\codim^{\mathcal{T}/\mathcal{T}_Z}_{\mathrm{Krull}}$ on $\mathcal{T}/\mathcal{T}_Z$ coincide.
	\label{lmarescodimiscodim}
\end{lma}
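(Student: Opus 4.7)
The key tool is the homeomorphism $\Spc(q)\colon \Spc(\mathcal{T}/\mathcal{T}_Z) \xrightarrow{\sim} U$, together with the fact that $U$ is open in $\Spc(\mathcal{T})$. Write $\mathcal{P}' = \Spc(q)(\mathcal{P}) \in U$ for a given point $\mathcal{P} \in \Spc(\mathcal{T}/\mathcal{T}_Z)$. By definition of the restricted dimension function, $-\codim^{\mathcal{T}}_{\mathrm{Krull}}|_U(\mathcal{P}) = -\codim^{\mathcal{T}}_{\mathrm{Krull}}(\mathcal{P}')$, so the task is to show that a maximal chain of irreducible closed subsets ending at a maximal irreducible component of $\Spc(\mathcal{T})$ containing $\mathcal{P}'$ has the same length as a maximal chain ending at a maximal irreducible component of $\Spc(\mathcal{T}/\mathcal{T}_Z)$ containing $\mathcal{P}$.

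The plan is to set up a length-preserving bijection between these two sorts of chains, using the classical correspondence $C \mapsto C \cap U$ (with inverse $C' \mapsto \overline{C'}$) between irreducible closed subsets of $\Spc(\mathcal{T})$ meeting $U$ and irreducible closed subsets of $U$. This correspondence preserves strict inclusion (since $C \cap U$ is open and dense in $C$ whenever it is nonempty, so $\overline{C \cap U} = C$), and it sends $\overline{\{\mathcal{P}'\}}$ to $\overline{\{\mathcal{P}'\}} \cap U$, which under $\Spc(q)$ corresponds to $\overline{\{\mathcal{P}\}}$.

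The step that requires the most care is showing that maximal irreducible components of $\Spc(\mathcal{T})$ containing $\mathcal{P}'$ correspond under this bijection to maximal irreducible components of $U$ containing $\mathcal{P}'$. For one direction, if $C$ is a maximal irreducible component of $\Spc(\mathcal{T})$ containing $\mathcal{P}'$ and $C \cap U \subsetneq C''$ with $C''$ irreducible closed in $U$, then $\overline{C''}$ is irreducible closed in $\Spc(\mathcal{T})$ and contains $\overline{C \cap U} = C$; maximality of $C$ forces $\overline{C''} = C$, but then $C'' = \overline{C''} \cap U = C \cap U$, a contradiction. The reverse direction is symmetric, using that any strict enlargement of $\overline{C'}$ inside $\Spc(\mathcal{T})$ would, after intersecting with $U$, strictly enlarge $C'$.

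Granting this, every chain $\overline{\{\mathcal{P}\}} = C_0 \subsetneq \cdots \subsetneq C_n$ in $\Spc(\mathcal{T}/\mathcal{T}_Z)$ ending at a maximal irreducible component transports (via $\Spc(q)$ and then $C' \mapsto \overline{C'}$) to a chain $\overline{\{\mathcal{P}'\}} \subsetneq \cdots$ in $\Spc(\mathcal{T})$ of the same length ending at a maximal irreducible component containing $\mathcal{P}'$, and vice versa. Taking the supremum over such chains on both sides yields the claimed equality of dimension functions.
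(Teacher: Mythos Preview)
Your proof is correct. It is essentially equivalent to the paper's argument, but phrased purely topologically in terms of irreducible closed subsets, whereas the paper works with prime ideals directly. The paper simply observes that $\Spc(q)$ gives an inclusion-preserving bijection between the primes of $\mathcal{T}/\mathcal{T}_Z$ and the primes of $\mathcal{T}$ containing $\mathcal{T}_Z$, and then notes that $-\codim_{\mathrm{Krull}}(\mathcal{P})$ is (minus) the maximal length of a chain of primes \emph{containing} $\mathcal{P}$; since any prime containing $\mathcal{P}' \supseteq \mathcal{T}_Z$ automatically contains $\mathcal{T}_Z$ as well, such chains are in bijection and the equality is immediate. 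Your bijection $C \mapsto C\cap U$, $C' \mapsto \overline{C'}$ is the same correspondence expressed via closures of points, and your verification that maximal irreducible components match up is exactly the step the paper leaves implicit. The paper's formulation is shorter; yours makes the topological content (in particular, why the chains stay inside $U$) more visible.
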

\begin{proof}
	The Verdier quotient functor $q: \mathcal{T} \to \mathcal{T}/\mathcal{T}_Z$ induces an inclusion-preserving bijection between the prime ideals of $\mathcal{T}/\mathcal{T}_Z$ and the prime ideals of $\mathcal{T}$ containing $\mathcal{T}_Z$ (see \cite{balmer2005spectrum}*{Proposition 3.11}). From this, the claim follows immediately, as $-\codim_{\mathrm{Krull}}|$ counts prime ideals in a chain that contain a given one.
\end{proof}

\begin{rem}
	Lemma \ref{lmarescodimiscodim} is not true if we replace $-\codim_{\mathrm{Krull}}$ by $\dim_{\mathrm{Krull}}$. For example, let $\mathcal{P} \in \Spc(\mathcal{T})$ be a prime ideal with $\dim_{\mathrm{Krull}}(\mathcal{P})=c>0$ and set $\mathcal{T}_Z := \mathcal{P}$. Then
	\[\dim_{\mathrm{Krull}}|_U(0)= \dim_{\mathrm{Krull}}(\mathcal{P}) = c\]
	but the Krull dimension of the prime ideal $0 \in \Spc(\mathcal{T}/\mathcal{T}_Z)$ is~0.
\end{rem}

\subsection{Tensor triangular Chow groups}
In \cite{balmerchow}, the following notion of Chow groups for a tensor triangulated category was introduced. Let $\mathcal{T}$ be a tensor triangulated category as in Definition \ref{defttcat}, equipped with a dimension function and let $p \in \mathbb{Z}$. Consider the diagram
\[\xymatrix{
	\mathcal{T}_{(p)} \ar@{^{(}->}[r]^I \ar@{->>}[d]^Q& \mathcal{T}_{(p+1)} \\
	\mathcal{T}_{(p)}/\mathcal{T}_{(p-1)} \ar@{^{(}->}[r]^J & (\mathcal{T}_{(p)}/\mathcal{T}_{(p-1)})^{\natural}
}
\]
where $I,J$ denote the obvious embeddings and $Q$ is the Verdier quotient functor. After applying $\Kzero$ we get a diagram
\[
\xymatrix{
	\Kzero(\mathcal{T}_{(p)}) \ar[r]^i \ar@{->>}[d]^q& \Kzero(\mathcal{T}_{(p+1)}) \\
	\Kzero(\mathcal{T}_{(p)}/\mathcal{T}_{(p-1)}) \ar@{^{(}->}[r]^(0.38){j} & \Kzero\left( (\mathcal{T}_{(p)}/\mathcal{T}_{(p-1)})^{\natural} \right) 
}
\]
where the lowercase maps are induced by the uppercase functors.

\begin{dfn}[see \cite{balmerchow}*{Definitions 8 and 10}]
	The \emph{$p$-dimensional cycle group of $\mathcal{T}$} is defined as
	\[\Cyc^{\Delta}_{p}(\mathcal{T}) := \Kzero\left( (\mathcal{T}_{(p)}/\mathcal{T}_{(p-1)} )^{\natural}\right)~.\]
	The \emph{$p$-dimensional Chow group of $\mathcal{T}$} is defined as
	\[\Chow^{\Delta}_{p}(\mathcal{T}) := \Cyc^{\Delta}_{p}(\mathcal{T}) / j \circ q (\ker(i))~.\]
	\label{defcycle} \label{defchow}
\end{dfn}

Let us recall from \cite{kleinchow} the following theorem that justifies the name "Chow groups":
\begin{thm}[see \cite{kleinchow}*{Theorem 3.2.6}]
	Let $X$ be a separated, non-singular scheme of finite type over a field and assume that the tensor triangulated category $\Dperf(X)$ is equipped with the dimension function~$-\codim_{\mathrm{Krull}}$. Then there are isomorphisms
	\[\Cyc^{\Delta}_{p}\left(\Dperf(X)\right) \cong Z^{-p}(X) \qquad \text{and} \qquad \Chow^{\Delta}_{p}\left(\Dperf(X)\right) \cong \Chow^{-p}(X) \]
	for all $p \in \mathbb{Z}$.
	\label{agreementthm}
\end{thm}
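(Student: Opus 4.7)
The plan is to first establish the cycle-group isomorphism via the local-to-global theorem \ref{thmfiltdecomp}, and then extend it to the Chow-group level by matching the tensor triangular relation $j \circ q(\ker i)$ with classical rational equivalence.

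For the cycle isomorphism, I would apply Theorem \ref{thmfiltdecomp} to $\Dperf(X)$ (which is rigid because $X$ is non-singular) equipped with $-\codim_{\mathrm{Krull}}$. Under the identification $\Spc(\Dperf(X)) \cong X$ from Example \ref{exschemereconstruct}, primes $\mathcal{P} \in \Spc(\Dperf(X))$ with $\dim(\mathcal{P}) = p$ correspond to points $x \in X$ of codimension $-p$, yielding
\[\left(\Dperf(X)_{(p)}/\Dperf(X)_{(p-1)}\right)^{\natural} \simeq \coprod_{\codim(x) = -p} \Min\bigl(\Dperf(X)_{\mathcal{P}_x}\bigr).\]
For each such $x$, I would identify $\Dperf(X)_{\mathcal{P}_x}$ with (the idempotent completion of) $\Dperf(\mathcal{O}_{X,x})$, so that $\Min$ picks out perfect complexes supported at the closed point. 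Since $\mathcal{O}_{X,x}$ is a regular local ring, every finite-length module admits a finite projective resolution, and a dévissage / Koszul argument gives $\Kzero(\Min(\Dperf(X)_{\mathcal{P}_x})) \cong \mathbb{Z}$, generated by the class of the residue field $[k(x)]$. Taking $\Kzero$ of the coproduct yields $\Cyc^{\Delta}_p(\Dperf(X)) \cong \bigoplus_{\codim(x)=-p} \mathbb{Z} = Z^{-p}(X)$.

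For the Chow-group isomorphism, I would show that $j \circ q(\ker i)$ corresponds to the subgroup of cycles rationally equivalent to zero under the above identification. A class $\alpha \in \Kzero(\Dperf(X)_{(p)})$ lies in $\ker i$ iff it becomes zero in $\Kzero(\Dperf(X)_{(p+1)})$, and by the right-exact Verdier sequence
\[\Kzero(\Dperf(X)_{(p)}) \xrightarrow{i} \Kzero(\Dperf(X)_{(p+1)}) \to \Kzero\bigl(\Dperf(X)_{(p+1)}/\Dperf(X)_{(p)}\bigr) \to 0,\]
such classes arise (after idempotent completion) as boundaries of classes supported on codimension-$(-p-1)$ points. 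For each codimension-$(-p-1)$ point $y$ with closure $V = \overline{\{y\}}$ and each $f \in k(V)^{\times}$, I would exhibit a class in $\Kzero(\Dperf(X)_{(p+1)})$ coming from the Koszul-type resolution of $\mathcal{O}_V/f$ whose image under $j \circ q$ is $\mathrm{div}(f)$, thereby recovering the classical presentation of $\Chow^{-p}(X)$.

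The main obstacle will be the second step: showing that the boundary map arising from the tensor triangular Verdier sequence matches, generator-by-generator, the classical divisor map $K_1(k(V)) \to \bigoplus_{x \in V^{(1)}} \mathbb{Z}$. This requires a careful local computation at each codimension-$(-p-1)$ point, tracking how the Koszul generator of $\Kzero(\Min(\mathcal{T}_{\mathcal{P}_y}))$ transforms under multiplication by $f$, and bridging the tensor triangular definitions to Quillen's Gersten-style localization sequence on $\Spec(\mathcal{O}_{X,y})$. The delicate point is to ensure that idempotent completion and the equivalence of Theorem \ref{thmfiltdecomp} are compatible with these boundary maps, so that the two descriptions of rational equivalence agree on the nose rather than merely up to an automorphism of $Z^{-p}(X)$.
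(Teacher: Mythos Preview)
The paper does not prove this statement at all: Theorem~\ref{agreementthm} is stated with the attribution ``see \cite{kleinchow}*{Theorem 3.2.6}'' and is simply recalled from the author's earlier preprint, with no argument given here. So there is nothing in the present paper to compare your proposal against.

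That said, your outline is broadly the standard route to such a comparison result and is consistent with how the paper \emph{uses} the surrounding machinery (Theorem~\ref{thmfiltdecomp}, the identification $\Spc(\Dperf(X))\cong X$, and the regular-local computation of $\Kzero(\Min)$). The first half of your sketch, producing $\Cyc^{\Delta}_p \cong Z^{-p}(X)$ via the local decomposition and d\'evissage at regular local rings, is correct in spirit. For the second half, your plan to match $j\circ q(\ker i)$ with rational equivalence is the right target, but the argument as written is still a plan rather than a proof: you would need to actually identify the relevant localization/boundary map with the tame-symbol map $K_1(k(y))\to\bigoplus_{x}\mathbb{Z}$, not merely exhibit that divisors of functions lie in the image. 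In practice this is done by passing through the coniveau filtration on $\Db(\mathrm{Coh}(X))$ and invoking Quillen's localization and d\'evissage to identify the triangular sequence with the classical Gersten complex, exactly the bridge you flag as the ``main obstacle''. The paper itself later performs this kind of comparison (see the proof of Lemma~\ref{gerstenexample}), so the ingredients are available, but the full verification lives in \cite{kleinchow}.
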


\subsection{Algebraic models for tensor triangulated categories}
\label{sectionmodels}
From now on, let $\mathcal{T}$ denote an essentially small tensor triangulated category as in Definition~\ref{defttcat}. It is well-known that there is no $\mathrm{K}$-theory functor from the category of small triangulated categories to the category of spaces, if we require that it satisfies some natural axioms (see \cite{schlichtingknote}). In order to be able to talk about the higher and negative $\mathrm{K}$-theory of $\mathcal{T}$, we therefore work with an algebraic model of $\mathcal{T}$, rather than $\mathcal{T}$ itself. The primary aim of this section is, given a tensor triangulated category $\mathcal{T}$ with an algebraic model, to produce algebraic models for certain triangulated subquotients of $\mathcal{T}$, as well as for their idempotent completions.

\subsubsection{Monoidal models.}

Let us first recall the notions of Frobenius pair and tensor Frobenius pair.
\begin{dfn}[see \cite{schlichtingnegative}*{Section 3.4}, Definition \ref{dfnfrobpair}]
	A \emph{Frobenius category} is an exact category $\mathcal{E}$ that has enough injective and enough projective objects such that the classes of injective and projective objects in $\mathcal{E}$ coincide. A \emph{Frobenius pair} $\mathbf{E}=(\mathcal{E},\mathcal{E}_0)$ is a strictly full, faithful and exact inclusion of Frobenius categories $\mathcal{E}_0 \hookrightarrow \mathcal{E}$ such that the projective-injective objects of $\mathcal{E}_0$ are mapped to the projective-injective objects of~$\mathcal{E}$.
\end{dfn}

To every Frobenius category, we can associate its \emph{stable category}:
\begin{dfn}
	Let $\mathcal{E}$ be a Frobenius category. Its \emph{stable category} $\underline{\mathcal{E}}$ is the category with the same objects as $\mathcal{E}$ and for two objects $a,b \in \underline{\mathcal{E}}$, we have
	\[\Hom_{\underline{\mathcal{E}}}(a,b) := \Hom_{\mathcal{E}}(a,b)/\mathcal{J}~,\]
	where $\mathcal{J}$ is the subgroup of homomorphisms that factor through a projective-injective object of $\mathcal{E}$.
\end{dfn}
It is well-known that the category $\underline{\mathcal{E}}$ has a natural structure of triangulated category (see e.g.\ \cite{kellerderuse}). The distinguished triangles are given as follows: for each object $a \in \mathcal{E}$, choose a conflation 
\[0 \to a \xrightarrow{\iota_a} ia \xrightarrow{\pi_a} \Sigma a \to 0 ~,\] 
where $ia$ is an injective object of $\mathcal{E}$. Then, for a conflation
\[a \xrightarrow{f} b \xrightarrow{g} c\]
in $\mathcal{E}$, we obtain a \emph{basic distinguished triangle}
\[a \xrightarrow{\overline{f}} b \xrightarrow{\overline{g}} c \xrightarrow{\overline{s}} \Sigma a\] in $\underline{\mathcal{E}}$ from the commutative diagram in $\mathcal{E}$
\[
\xymatrix{
	a \ar[r]^f \ar[d]^{\id} & b \ar[r]^g \ar@{.>}[d]& c \ar[d]^{s}\\
	a \ar[r]^{\iota_a} & ia \ar[r]^{\pi_a}& \Sigma a
}
\]
where the dotted arrow exists by the injectivity of $ia$ and induces $s$. The distinguished triangles in $\underline{\mathcal{E}}$ are given as all sequences 
\[x \to y \to z \to \Sigma x\]
isomorphic in $\underline{\mathcal{E}}$ to basic distinguished triangles.

\begin{lma}
	Given a Frobenius pair $\mathbf{E}=(\mathcal{E},\mathcal{E}_0)$, the inclusion $i:\mathcal{E}_0 \hookrightarrow \mathcal{E}$ induces a fully faithful and exact functor $\underline{i}:\underline{\mathcal{E}_0} \hookrightarrow \underline{\mathcal{E}}$
	\label{lmafrobpairinclusion}
\end{lma}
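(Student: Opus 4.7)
The plan is to verify well-definedness, then fullness (which is trivial), then faithfulness (the crux of the argument), and finally exactness. For well-definedness of $\underline{i}$, note that by the definition of a Frobenius pair, $i$ sends projective-injective objects of $\mathcal{E}_0$ to projective-injective objects of $\mathcal{E}$. So any morphism in $\mathcal{E}_0$ factoring through a projective-injective of $\mathcal{E}_0$ has its $i$-image factoring through a projective-injective of $\mathcal{E}$, and $i$ descends to a functor $\underline{i} \colon \underline{\mathcal{E}_0} \to \underline{\mathcal{E}}$. Fullness is immediate from $i$ being a full inclusion on the underlying exact categories: any representative in $\mathcal{E}$ of a morphism between objects of $\mathcal{E}_0$ lifts to $\mathcal{E}_0$ and then descends to the stable quotient.

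The heart of the proof is faithfulness. Suppose $f \colon a \to b$ in $\mathcal{E}_0$ satisfies $\underline{i}(\bar{f}) = 0$ in $\underline{\mathcal{E}}$, so $i(f) = g \circ h$ with $h \colon i(a) \to p$, $g \colon p \to i(b)$, and $p$ projective-injective in $\mathcal{E}$. The plan is to replace $p$ by a projective-injective coming from $\mathcal{E}_0$ (whose image is automatically projective-injective in $\mathcal{E}$). Since $\mathcal{E}_0$ has enough injectives, I pick an admissible monomorphism $\iota \colon a \hookrightarrow I$ in $\mathcal{E}_0$ with $I$ projective-injective in $\mathcal{E}_0$. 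Then $i(\iota)$ is admissible in $\mathcal{E}$, and since $p$ is injective, $h$ extends to $h' \colon i(I) \to p$ with $h' \circ i(\iota) = h$. Hence $i(f) = (g h') \circ i(\iota)$, and since $g h' \colon i(I) \to i(b)$ is a morphism between objects of $\mathcal{E}_0$, the fullness of $i$ yields some $\psi \colon I \to b$ in $\mathcal{E}_0$ with $i(\psi) = g h'$. Faithfulness of $i$ then gives $f = \psi \circ \iota$, i.e., $f$ factors through the projective-injective object $I$ of $\mathcal{E}_0$, so $\bar f = 0$ in $\underline{\mathcal{E}_0}$.

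Exactness requires producing a natural isomorphism $\underline{i} \circ \Sigma_{\underline{\mathcal{E}_0}} \cong \Sigma_{\underline{\mathcal{E}}} \circ \underline{i}$ compatible with distinguished triangles. For $a \in \mathcal{E}_0$, both $\Sigma_{\underline{\mathcal{E}_0}} a$ and $\Sigma_{\underline{\mathcal{E}}} i(a)$ are defined via chosen conflations $a \to I_a \to \Sigma_{\underline{\mathcal{E}_0}} a$ and $i(a) \to I'_{i(a)} \to \Sigma_{\underline{\mathcal{E}}} i(a)$ into projective-injectives; the standard comparison of such conflations (using the injectivity of the targets in $\mathcal{E}$) produces an isomorphism $i(\Sigma_{\underline{\mathcal{E}_0}} a) \cong \Sigma_{\underline{\mathcal{E}}} i(a)$ in $\underline{\mathcal{E}}$, and the usual diagram chase shows it is natural in $a$. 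Under this isomorphism, the basic distinguished triangle attached to a conflation $a \to b \to c$ in $\mathcal{E}_0$ is sent to the basic distinguished triangle attached to $i(a) \to i(b) \to i(c)$ in $\mathcal{E}$, which is a conflation by exactness of $i$.

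The main obstacle is the faithfulness step: the projective-injectives of $\mathcal{E}$ are generally strictly more numerous than (the images of) those of $\mathcal{E}_0$, so a factorization of $i(f)$ through a projective-injective of $\mathcal{E}$ need not \emph{a priori} come from a factorization in $\mathcal{E}_0$. The injective-extension trick combined with the fullness of $i$ is what bridges this gap; once it is in place, the remaining assertions follow from standard manipulations with the triangulated structure on a stable category.
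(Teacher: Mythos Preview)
Your proof is correct and follows essentially the same approach as the paper: the faithfulness argument via choosing an inflation $a \hookrightarrow I$ into an injective of $\mathcal{E}_0$ and extending along it using the injectivity of $p$ in $\mathcal{E}$ is exactly the paper's trick. You give more detail on the exactness step than the paper (which simply asserts it as a direct consequence of $i$ being exact and preserving projective-injectives), but the substance is the same.
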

\begin{proof}
	The functor  $\underline{i}$ is well-defined as the inclusion $i$ maps the projective-injective objects of $\mathcal{E}_0$ to the projective-injective objects of~$\mathcal{E}$. It is full because $i$ is so. In order to check faithfulness,
	let $\overline{f} \in \Hom_{\underline{\mathcal{E}_0}}(a,b)$ and assume $\underline{i}(\overline{f}) = 0$, i.e.\ $i(f)$ factors as $i(a) \to x \to i(b) $ for some projective-injective object $x$ of $\mathcal{E}$. Let $a \hookrightarrow y$ be an inflation in $\mathcal{E}_0$ with $y$ injective. As $i$ preserves projective-injective objects and is exact, we get a commutative diagram
	\[
	\xymatrix{
	i(a) \ar[r]\ar@{^(->}[rd] & x \ar[r] & i(b) \\
	& i(y) \ar@{.>}[u] &
	}
	\]
	where the dotted arrow exists because of the injectivity of $x$. We therefore see that we can write $i(f)$ as a composition $i(a) \to i(y) \to i(b)$. As $i$ was fully faithful, it follows that we can write $f$ as a composition $a \to y \to b$, from which we see that $\overline{f} = 0$, as $y$ was injective.
	
	The exactness of $\underline{i}$ is a direct consequence of the assumption that $i$ is exact and preserves projective-injective objects.
\end{proof}

Lemma \ref{lmafrobpairinclusion} tells us that we can view $\underline{\mathcal{E}_0} $ as a triangulated subcategory of $\underline{\mathcal{E}}$.

\begin{dfn}[see \cite{schlichtingnegative}*{Definition 3.5}]
	The \emph{derived category} of a Frobenius pair $\mathbf{E}=(\mathcal{E},\mathcal{E}_0)$ is defined as the Verdier quotient
	\[\mathrm{D}(\mathbf{E}) := \underline{\mathcal{E}}/\underline{\mathcal{E}_0}~.\]
\end{dfn}

\begin{ex}[see \cite{schlichtingnegative}*{Section 5.3}]
	Let $\mathcal{E}$ be an exact category and consider the category $\mathrm{Ch^b}(\mathcal{E})$ of chain complexes in $\mathcal{E}$. The category $\mathrm{Ch^b}(\mathcal{E})$  can be made into an exact category, if we let the conflations be those short exact sequences of chain complexes that split in every degree (the splittings need not be compatible with the differentials). In that case, $\mathrm{Ch}^b(\mathcal{E})$ becomes a Frobenius category, where the projective-injective objects are given as the contractible complexes. Therefore, $\underline{\mathrm{Ch^b}(\mathcal{E})} = \mathrm{K^b}(\mathcal{E})$, the usual bounded homotopy category of $\mathcal{E}$. If $\mathrm{Ac^b}(\mathcal{E}) \subset \mathrm{Ch^b}(\mathcal{E})$ denotes the full subcategory of complexes homotopy equivalent to an acyclic complex, then $\mathbf{Ch^b}(\mathcal{E}) := (\mathrm{Ch^b}(\mathcal{E}), \mathrm{Ac^b}(\mathcal{E}))$ is a Frobenius pair and we have 
	\[\mathrm{D}(\mathbf{Ch^b}(\mathcal{E})) = \mathrm{K^b}(\mathcal{E})/\underline{\mathrm{Ac^b}(\mathcal{E})} = \Db(\mathcal{E})~,\]
	the usual bounded derived category of $\mathcal{E}$.
\end{ex}

The notion of \emph{tensor Frobenius pair} is a symmetric monoidal variation on the definition of a Frobenius pair.
\begin{dfn}[see Definition \ref{dfntensorfrobpair} ]
A \emph{tensor Frobenius pair} is the datum of a triple $\mathbf{E}=(\mathcal{E},\mathcal{E}_0,\otimes)$ such that:
\begin{enumerate}[label=(\roman*)]
 \item $(\mathcal{E},\mathcal{E}_0)$ is a Frobenius pair.
 \item $(\mathcal{E},\otimes)$ is a symmetric monoidal category.
 \item For all objects $a \in \mathcal{E}$, the functor $a \otimes -$ is exact, i.e. it preserves conflations.
 \item For all objects $a \in \mathcal{E}$, the functor $a \otimes -$ preserves the projective/injective objects of $\mathcal{E}$.
 \item $\mathcal{E}_0$ is a $\otimes$-ideal in $\mathcal{E}$.
 \item For two inflations $a \rightarrowtail a'$ and $b \rightarrowtail b'$, the canonical map 
 \[a' \otimes b \coprod_{a \otimes b} a \otimes b' \to a' \otimes b'\]
 is an inflation. \label{listtensorfrobdefpushprod}
\end{enumerate}
\label{tensorfrobdef}
\end{dfn}

\begin{notn}
	We will omit the symbol $\otimes$ for the tensor product from the notation for a tensor Frobenius pair when there is no danger of confusion.
\end{notn}

\begin{lma}
	Let $\mathbf{A}=(\mathcal{A},\mathcal{A}_0)$ be a tensor Frobenius pair. Then $\mathrm{D}(\mathbf{A})$ inherits the structure of a tensor triangulated category and the localization functor $q: \mathcal{A} \to \mathrm{D}(\mathbf{A})$ is a tensor functor. \label{derivedismonoidal}
\end{lma}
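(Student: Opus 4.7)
The plan is to transport the symmetric monoidal structure of $\mathcal{A}$ first to the stable category $\underline{\mathcal{A}}$, and then to the Verdier quotient $\mathrm{D}(\mathbf{A}) = \underline{\mathcal{A}}/\underline{\mathcal{A}_0}$, checking at each stage that the tensor bifunctor interacts correctly with the triangulated structure.

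First I would descend $\otimes$ to $\underline{\mathcal{A}}$. Condition (iv) of Definition \ref{tensorfrobdef} says that $c \otimes -$ preserves projective-injective objects, so whenever a morphism $f : a \to b$ in $\mathcal{A}$ factors through a projective-injective $x$, the morphism $\id_c \otimes f$ factors through $c \otimes x$, which is still projective-injective; a symmetric argument applies on the other factor. Thus $\otimes$ passes to a well-defined bifunctor $\underline{\otimes}: \underline{\mathcal{A}} \times \underline{\mathcal{A}} \to \underline{\mathcal{A}}$, and the associator, unitor, and commutator isomorphisms from $\mathcal{A}$ descend and still satisfy the coherence axioms of Definition \ref{defttcat}.

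Next I would check the triangulated exactness of $\underline{\otimes}$ in each variable. Fix $c$ and let $0 \to a \to ia \to \Sigma a \to 0$ be the chosen conflation defining $\Sigma a$. By (iii), tensoring with $c$ yields a conflation $0 \to c \otimes a \to c \otimes ia \to c \otimes \Sigma a \to 0$ whose middle term is projective-injective by (iv); this is thus a legitimate suspension conflation for $c \otimes a$, producing a canonical isomorphism $c \otimes \Sigma a \cong \Sigma(c \otimes a)$. Tensoring an arbitrary conflation $a \to b \to d$ with $c$ again produces a conflation, hence a basic distinguished triangle which is the image of the one defining the original triangle; since every distinguished triangle is isomorphic to a basic one, $c \otimes -$ is exact.

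Finally, condition (v) says $\mathcal{A}_0$ is a $\otimes$-ideal, and this immediately implies that $\underline{\mathcal{A}_0}$ is a thick $\otimes$-ideal in $\underline{\mathcal{A}}$. The tensor product thus descends to an exact symmetric monoidal bifunctor on the Verdier quotient $\mathrm{D}(\mathbf{A})$ by the standard construction for quotients by tensor ideals, and the composite $q: \mathcal{A} \to \underline{\mathcal{A}} \to \mathrm{D}(\mathbf{A})$ is a tensor functor by construction. The main obstacle is the second step: matching the natural isomorphism $c \otimes \Sigma a \cong \Sigma(c \otimes a)$ with all the coherence diagrams required for a tensor triangulated category, and verifying bi-exactness (not only exactness in each variable separately). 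Condition (vi) is expected to enter here, since the pushout--product inflation property is exactly what is needed to control the behaviour of $\otimes$ on cones and to make the construction compatible on both sides.
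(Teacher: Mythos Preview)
Your approach is essentially the same as the paper's: descend $\otimes$ to $\underline{\mathcal{A}}$ using condition (iv), use condition (v) to pass to the Verdier quotient, and check exactness of $a \otimes -$ in each variable. The paper packages your second step more abstractly: conditions (iii) and (iv) together say precisely that for each $a \in \mathcal{A}$ the functor $a \otimes -$ is a map of Frobenius pairs $(\mathcal{A},\mathcal{A}_0) \to (\mathcal{A},\mathcal{A}_0)$, and such maps always induce exact functors on derived categories (Schlichting, \cite{schlichtingnegative}*{Section 3.5}). Your explicit argument via suspension conflations is exactly what underlies that general fact.

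Your anticipated obstacles are not real here. Definition~\ref{defttcat} only asks for exactness in each variable separately, so there is no further bi-exactness coherence to verify, and condition (vi) plays no role in this lemma. The pushout--product axiom is used only later (Lemma~\ref{tensorwaldexact}) to guarantee that $\otimes$ is biexact in Waldhausen's sense, which is needed for the $\mathrm{K}$-theory product, not for the tensor triangulated structure on $\mathrm{D}(\mathbf{A})$.
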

\begin{proof}
	As $\mathcal{A}_0$ is a tensor ideal, the triangulated subcategory $\underline{\mathcal{A}_0}$ is a tensor ideal in $\underline{\mathcal{A}}$ and thus the quotient $\underline{\mathcal{A}}/\underline{\mathcal{A}_0}$ is a tensor triangulated category where the tensor product $\otimes^L$ is induced from the one on $\mathcal{A}$. Indeed, $\otimes^L$ makes $\mathrm{D}(\mathbf{A})$ a symmetric monoidal category, where the associativity, commutativity and unit natural isomorphisms are given as the images of the ones of $(\mathcal{A},\otimes)$ under the functor $\mathcal{A} \to \mathrm{D}(\mathbf{A})$. The functors $a \otimes^L -$ are exact for all objects $a$ of $\mathrm{D}(\mathbf{A})$ since the definition of tensor Frobenius pair guarantees that $a \otimes -$ is a map of Frobenius pairs for all objects $a$ of $\mathcal{A}$. These maps always induce exact functors on the derived categories (cf. \cite{schlichtingnegative}*{Section 3.5}).
\end{proof}

\begin{ex}
	Let $X$ be a non-singular, separated scheme of finite type over a field. Consider the Frobenius pair $(\mathrm{sPerf}(X),\mathrm{asPerf}(X))$, where $\mathrm{sPerf}(X)$ denotes the exact category of strict perfect complexes on $X$ with conflations the degree-wise split ones and $\mathrm{asPerf}(X)$ is the subcategory of acyclic complexes (see Definition \ref{dfnperf}). In Section \ref{theintproduct} we will see that this is a tensor Frobenius pair with respect to the usual tensor product of chain complexes, with derived category $\Dperf(X)$.
\end{ex}

\begin{ex}
	Let $G$ be a finite group, $k$ be a field such that $\mathrm{char}(k)$ divides $|G|$ and let $kG\Mod$ be the category of finitely generated $kG$-modules, which is a Frobenius category as $kG$ is a self-injective ring. Denote by $kG\proj$ the subcategory of projective modules, then $(kG\Mod,kG\proj)$ is a Frobenius pair. It is also a tensor Frobenius pair with respect to the tensor product of modules $\otimes_k$ and its derived category is $kG\stab$, the stable category of the Frobenius category $kG\Mod$.
\end{ex}

\begin{cor}
	Let $\mathbf{A}=(\mathcal{A},\mathcal{A}_0,\otimes)$ be a tensor Frobenius pair and let $\mathcal{J} \subset \mathrm{D}(\mathbf{A})$ be a tensor ideal. Let $\mathcal{B} \subset \mathcal{A}$ be the full subcategory of those objects that become isomorphic to an object of $\mathcal{J}$ after passing to $\mathrm{D}(\mathbf{A})$. Then $\mathbf{B}:=(\mathcal{B},\mathcal{A}_0)$ is a Frobenius pair and $\mathbf{C}:=(\mathcal{A},\mathcal{B},\otimes)$ is a tensor Frobenius pair, with derived categories $\mathrm{D}(\mathbf{B}) = \mathcal{J}$ and $\mathrm{D}(\mathbf{C}) = \mathrm{D}(\mathbf{A})/\mathcal{J}$.
	\label{corsubtfp}
\end{cor}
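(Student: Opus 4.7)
The plan is to verify the structural claims in order: $\mathcal{B}$ is Frobenius with the same projective-injectives as $\mathcal{A}$, $(\mathcal{B},\mathcal{A}_0)$ is a Frobenius pair with derived category $\mathcal{J}$, and then $(\mathcal{A},\mathcal{B},\otimes)$ is a tensor Frobenius pair with derived category $\mathrm{D}(\mathbf{A})/\mathcal{J}$.

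First, I would show $\mathcal{B}$ is closed under extensions in $\mathcal{A}$: a conflation $a \rightarrowtail b \twoheadrightarrow c$ maps to a distinguished triangle in $\mathrm{D}(\mathbf{A})$, and since $\mathcal{J}$ is triangulated, $q(a), q(c) \in \mathcal{J}$ forces $q(b) \in \mathcal{J}$. Thus $\mathcal{B}$ inherits an exact structure from $\mathcal{A}$. Every projective-injective $p \in \mathcal{A}$ satisfies $q(p) = 0$ (since $\id_p$ factors through $p$ and so is zero already in $\underline{\mathcal{A}}$), so $p \in \mathcal{B}$; in particular $\mathcal{A}_0 \subseteq \mathcal{B}$, and $\mathcal{B}$ has the same projective-injectives as $\mathcal{A}$, making it a Frobenius category with $(\mathcal{B},\mathcal{A}_0)$ a Frobenius pair. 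To identify $\mathrm{D}(\mathbf{B}) \simeq \mathcal{J}$, I use Lemma~\ref{lmafrobpairinclusion} to view $\underline{\mathcal{A}_0} \subseteq \underline{\mathcal{B}} \subseteq \underline{\mathcal{A}}$ as thick triangulated subcategories, noting that because the quotient $\underline{\mathcal{A}} \to \mathrm{D}(\mathbf{A})$ is the identity on objects, $\underline{\mathcal{B}}$ is precisely the preimage of the triangulated subcategory $\mathcal{J}$. The standard Verdier calculus for an iterated quotient then makes $\mathrm{D}(\mathbf{B}) = \underline{\mathcal{B}}/\underline{\mathcal{A}_0}$ a full triangulated subcategory of $\mathrm{D}(\mathbf{A})$, with essential image exactly $\mathcal{J}$.

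For the tensor Frobenius pair $\mathbf{C}$, axioms (i)--(iv) and (vi) of Definition~\ref{tensorfrobdef} are inherited unchanged from $\mathbf{A}$, since they depend only on the monoidal and Frobenius structures on $\mathcal{A}$, which do not change. The only new content is axiom (v), that $\mathcal{B}$ is a $\otimes$-ideal: if $a \in \mathcal{A}$ and $b \in \mathcal{B}$, then $q(a \otimes b) = q(a) \otimes^L q(b) \in \mathcal{J}$ because $\mathcal{J}$ is a $\otimes$-ideal in $\mathrm{D}(\mathbf{A})$, hence $a \otimes b \in \mathcal{B}$. Finally, $\mathrm{D}(\mathbf{C}) = \underline{\mathcal{A}}/\underline{\mathcal{B}}$ is identified with $\mathrm{D}(\mathbf{A})/\mathcal{J}$ via the iterated Verdier quotient isomorphism $\underline{\mathcal{A}}/\underline{\mathcal{B}} \simeq (\underline{\mathcal{A}}/\underline{\mathcal{A}_0})/(\underline{\mathcal{B}}/\underline{\mathcal{A}_0})$, using the identification $\underline{\mathcal{B}}/\underline{\mathcal{A}_0} \simeq \mathcal{J}$ from the previous step. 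I do not expect any serious obstacle; the only mildly delicate bookkeeping is ensuring $\underline{\mathcal{B}}$ is the literal preimage of $\mathcal{J}$ on the nose, which relies on the quotient functors between Frobenius and stable/derived categories being identities on objects; after that, every remaining step is a direct translation through the universal properties of Verdier quotients.
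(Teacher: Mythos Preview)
Your proposal is correct and follows essentially the same approach as the paper: the paper simply cites \cite{schlichtingnegative}*{Section 5.2} for the facts that $(\mathcal{B},\mathcal{A}_0)$ and $(\mathcal{A},\mathcal{B})$ are Frobenius pairs with the stated derived categories, whereas you unpack those details by hand (extension-closure, projective-injectives, iterated Verdier quotients). The only additional content---that $\mathcal{B}$ is a $\otimes$-ideal because it is the preimage of the tensor ideal $\mathcal{J}$ under the tensor functor $q:\mathcal{A}\to\mathrm{D}(\mathbf{A})$---is argued identically in both.
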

\begin{proof}
	From \cite{schlichtingnegative}*{Section 5.2}, we already know that $(\mathcal{B},\mathcal{A}_0)$ and $(\mathcal{A},\mathcal{B})$ are Frobenius pairs with corresponding derived categories $\mathcal{J}$ and $\mathrm{D}(\mathbf{A})/\mathcal{J}$. The fact that $\mathbf{C}$ is a \emph{tensor} Frobenius pair follows since the localization functor $\mathcal{A} \to \mathrm{D}(\mathbf{A})$ is a tensor functor and the preimage of a tensor ideal under such a functor is again a tensor ideal.
\end{proof}

\subsubsection{Models for idempotent completion.}
\label{sectmodic} 
If $\mathcal{T} = \mathrm{D}(\mathbf{A})$ for a given tensor Frobenius pair $\mathbf{A} = (\mathcal{A},\mathcal{A}_0)$, we would like to find a tensor Frobenius pair that models the idempotent completion~$\mathcal{T}^{\natural}$. In order to do so, we need the \emph{countable envelope} $\mathrm{C}\mathbf{A} = (\mathrm{C}\mathcal{A},\mathrm{C}\mathcal{A}_0)$, which is a tensor Frobenius pair associated to $\mathbf{A}$ and whose construction is discussed in Appendix \ref{chaptercountable}. The idea is to first embed $\mathrm{D}(\mathbf{A})$ into $\mathrm{D}(\mathrm{C}\mathbf{A})$, the derived category of the countable envelope of $\mathbf{A}$ (see Theorem \ref{thmceistfrobpair}), which has countable copdroducts and is therefore idempotent complete. Then one takes thick closures. Let us give some more details.

The embedding $\mathbf{A} \to \mathrm{C}\mathbf{A}$ (see Remark \ref{remyonedfactorscountable}) induces a fully faithful embedding 
\[\mathrm{D}(\mathbf{A}) \to \mathrm{D}(\mathrm{C}\mathbf{A})\]
(see \cite{schlichtingnegative}*{Proposition 4.4}). In particular we can view $\mathrm{D}(\mathbf{A})$ as a triangulated subcategory of $\mathrm{D}(\mathrm{C}\mathbf{A})$ and consider its thick closure $\overline{\mathrm{D}(\mathbf{A})} \subset \mathrm{D}(\mathrm{C}\mathbf{A})$ which is a triangulated subcategory as well. By \cite{schlichtingnegative}*{Section 5.2}, $\overline{\mathrm{D}(\mathbf{A})}$ admits a Frobenius model $\mathbf{A}^{\natural}$ given as follows: if $\mathcal{B}$ is the full subcategory of $\mathrm{C}\mathcal{A}$ that consists of objects that are isomorphic to objects of $\overline{\mathrm{D}(\mathbf{A})}$ in $\mathrm{D}(\mathrm{C}\mathbf{A})$, then we put $\mathbf{A}^{\natural} = (\mathcal{B},\mathrm{C}\mathcal{A}_0)$.

\begin{lma}
	Assume that $\mathbf{A}$ is a tensor Frobenius pair. Then the Frobenius pair $\mathbf{A}^{\natural}$ is a tensor Frobenius pair, with the tensor structure inherited from the one of~$\mathrm{C}\mathbf{A}$.
	\label{lmaictfp}
\end{lma}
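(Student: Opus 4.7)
The plan is to verify the axioms (i)–(vi) of Definition \ref{tensorfrobdef} for $\mathbf{A}^{\natural} = (\mathcal{B}, \mathrm{C}\mathcal{A}_0)$ equipped with the tensor product inherited from $\mathrm{C}\mathbf{A}$. Axiom (i), the statement that $(\mathcal{B}, \mathrm{C}\mathcal{A}_0)$ is a Frobenius pair, is already available from the construction recalled immediately before the statement (following \cite{schlichtingnegative}*{Section 5.2}). The bulk of the work is to check that the tensor structure on $\mathrm{C}\mathcal{A}$ restricts appropriately to $\mathcal{B}$.

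The crux is axiom (ii): I must show that $\otimes$ descends to a symmetric monoidal structure on $\mathcal{B}$, i.e.\ that $\mathcal{B}$ is closed under $\otimes$ and contains the unit. Since $\mathrm{D}(\mathbf{A})$ is already a triangulated subcategory of $\mathrm{D}(\mathrm{C}\mathbf{A})$, its thick closure $\overline{\mathrm{D}(\mathbf{A})}$ is obtained just by closing under direct summands, so every object of $\overline{\mathrm{D}(\mathbf{A})}$ is a summand of an object of $\mathrm{D}(\mathbf{A})$. The embedding $\mathbf{A} \to \mathrm{C}\mathbf{A}$ provided by the appendix is monoidal, hence the induced embedding $\mathrm{D}(\mathbf{A}) \hookrightarrow \mathrm{D}(\mathrm{C}\mathbf{A})$ is a tensor functor and $\mathrm{D}(\mathbf{A})$ is closed under $\otimes$. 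If $a, b \in \overline{\mathrm{D}(\mathbf{A})}$ are summands of $a', b' \in \mathrm{D}(\mathbf{A})$, then $a \otimes b$ is a summand of $a' \otimes b' \in \mathrm{D}(\mathbf{A})$ and therefore lies in $\overline{\mathrm{D}(\mathbf{A})}$. The unit of $\mathrm{C}\mathbf{A}$ comes from the unit of $\mathbf{A}$ and so belongs to $\mathrm{D}(\mathbf{A}) \subset \overline{\mathrm{D}(\mathbf{A})}$. Transferring back along the quotient $\mathrm{C}\mathcal{A} \to \mathrm{D}(\mathrm{C}\mathbf{A})$, this gives closure of $\mathcal{B}$ under $\otimes$ in $\mathrm{C}\mathcal{A}$ and containment of the unit.

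With axiom (ii) in hand, axioms (iii)–(vi) transfer almost formally from $\mathrm{C}\mathbf{A}$. The exact structure on $\mathcal{B}$ is that of a fully exact subcategory of $\mathrm{C}\mathcal{A}$, so exactness of $a \otimes -$ on $\mathcal{B}$ for $a \in \mathcal{B}$ follows from the corresponding property in $\mathrm{C}\mathcal{A}$ combined with the closure statement. In Schlichting's construction the projective-injective objects of $\mathcal{B}$ are precisely the projective-injective objects of $\mathrm{C}\mathcal{A}$ that lie in $\mathcal{B}$; these are preserved by $a \otimes -$ by axiom (iv) for $\mathrm{C}\mathbf{A}$ together with the closure of $\mathcal{B}$ under $\otimes$. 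Axiom (v) is immediate since $\mathrm{C}\mathcal{A}_0$ is already a $\otimes$-ideal in $\mathrm{C}\mathcal{A}$. For the pushout-product axiom (vi), pushouts of inflations in $\mathcal{B}$ are computed inside $\mathrm{C}\mathcal{A}$ and remain in $\mathcal{B}$ by closure under extensions, so the pushout-product inflation property specializes from $\mathrm{C}\mathbf{A}$ to $\mathcal{B}$.

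The main obstacle is the tacit appeal in axiom (ii) to the appendix: one needs to know both that $\mathrm{C}\mathbf{A}$ really is a tensor Frobenius pair and that the canonical embedding $\mathbf{A} \hookrightarrow \mathrm{C}\mathbf{A}$ respects the monoidal structure, so that the induced embedding of derived categories is tensor. Both facts are exactly what is proved in Appendix \ref{chaptercountable} (see Theorem \ref{thmceistfrobpair} and Remark \ref{remyonedfactorscountable}); once they are accepted, the present lemma is a formal consequence of the summand generation of thick closures.
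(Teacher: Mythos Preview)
Your proposal is correct and follows essentially the same route as the paper: both reduce the lemma to showing that $\mathcal{B}$ is closed under the tensor product of $\mathrm{C}\mathcal{A}$, and both deduce this from the monoidality of the embedding $\mathrm{D}(\mathbf{A}) \hookrightarrow \mathrm{D}(\mathrm{C}\mathbf{A})$ together with the fact that thick closures are generated by direct summands. The paper leaves axioms (iii)--(vi) entirely implicit (``it is clear that all we have to prove is\ldots''), whereas you spell them out; the only minor slip is that the monoidality of the embedding is Proposition~\ref{propcetensex}, not Remark~\ref{remyonedfactorscountable}.
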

\begin{proof}
	According to Theorem \ref{thmceistfrobpair}, $\mathrm{C}\mathbf{A}$ is naturally a tensor Frobenius pair. The Frobenius pair $\mathbf{A}^{\natural}$ is given as $(\mathcal{B},\mathcal{A}_0)$, where $\mathcal{B}$ is the full subcategory of $\mathrm{C}\mathcal{A}$ that consists of objects that are isomorphic to objects of $\overline{\mathrm{D}(\mathbf{A})}$ in $\mathrm{D}(\mathrm{C}\mathbf{A})$. From this perspective, it is clear that all we have to prove is that $\mathcal{B}$ is closed under taking $\otimes_{\mathrm{C}\mathcal{A}}$-products.
	
	To do this, notice that by Proposition \ref{propcetensex}, the embedding $\mathrm{D}(\mathbf{A}) \to \mathrm{D}(\mathrm{C}\mathbf{A})$ preserves tensor products, and therefore $\mathrm{D}(\mathbf{A})$ is closed under $\otimes_{\mathrm{C}\mathcal{A}}$-products when we consider it as a triangulated subcategory of $\mathrm{D}(\mathrm{C}\mathbf{A})$. Now, take two objects $A,B$ of $\mathcal{B} \subset \mathcal{A}$ and denote by $L:\mathrm{C}\mathcal{A} \to \mathrm{D}(\mathrm{C}\mathbf{A})$ the localization functor given as the composition \[\mathrm{C}\mathcal{A} \to \underline{\mathrm{C}\mathcal{A}} \to \underline{\mathrm{C}\mathcal{A}}/\underline{\mathrm{C}\mathcal{A}_0} = \mathrm{D}(\mathrm{C}\mathbf{A})~.\] 
	The functor $L$ preserves tensor products since both functors in the composition do. By definition of thick closure there exist two objects $A',B' \in \mathcal{B}$ such that $L(A) \oplus L(A') \in \mathrm{D}(\mathbf{A})$ and $L(B) \oplus L(B) \in \mathrm{D}(\mathbf{A})$. Thus
	\begin{align*}
	&\left(L(A) \oplus L(A')\right) \otimes_{\mathrm{D}(\mathcal{F}\mathbf{A})} \left(L(B) \oplus L(B')\right) \cong\\ 
	&\cong \left(L(A)  \otimes_{\mathrm{D}(\mathcal{F}\mathbf{A})} L(B)\right) \oplus \left(L(A)  \otimes_{\mathrm{D}(\mathcal{F}\mathbf{A})} L(B')\right) \oplus \left(L(B)  \otimes_{\mathrm{D}(\mathcal{F}\mathbf{A})} L(A')\right) \\
	& \phantom{\cong} \oplus \left(L(B)  \otimes_{\mathrm{D}(\mathcal{F}\mathbf{A})} L(B')\right)
	\end{align*}
	which shows that $L(A)  \otimes_{\mathrm{D}(\mathrm{C}\mathbf{A})} L(B)= L(A \otimes_{\mathrm{C}\mathcal{A}} B)$ is isomorphic to a direct summand of an object in $\mathrm{D}(\mathbf{A})$ and proves that $A \otimes_{\mathrm{C}\mathcal{A}} B \in \mathcal{B}$.
\end{proof}

\begin{lma}
	The category $\mathrm{D}\left(\mathbf{A}^{\natural}\right)$ realizes the idempotent completion $\left(\mathrm{D}(\mathbf{A})\right)^{\natural}$ as a tensor triangulated category.
	\label{idempmodel}
\end{lma}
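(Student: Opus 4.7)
The plan is to reduce the statement to two facts: first, that the underlying triangulated category of $\mathrm{D}(\mathbf{A}^{\natural})$ is an idempotent completion of $\mathrm{D}(\mathbf{A})$; and second, that the tensor structure on $\mathrm{D}(\mathbf{A}^{\natural})$ restricts, under this identification, to the one on $\mathrm{D}(\mathbf{A})$. Once both hold, the universal property of the idempotent completion of a tensor triangulated category from \cite{balschlichidem} forces the tensor triangulated structures to match on the nose.

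First I would recall the (purely triangulated) statement from \cite{schlichtingnegative}*{Section~5.2}. By construction, $\mathrm{D}(\mathbf{A}^{\natural}) = \overline{\mathrm{D}(\mathbf{A})}$ is the thick closure of the essential image of the fully faithful embedding $\mathrm{D}(\mathbf{A}) \hookrightarrow \mathrm{D}(\mathrm{C}\mathbf{A})$. Since $\mathrm{C}\mathcal{A}$ has countable coproducts, so does $\underline{\mathrm{C}\mathcal{A}}$ and hence $\mathrm{D}(\mathrm{C}\mathbf{A})$; in particular $\mathrm{D}(\mathrm{C}\mathbf{A})$ is idempotent complete, so any thick subcategory is idempotent complete as well. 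Consequently $\mathrm{D}(\mathbf{A}^{\natural})$ is an idempotent complete triangulated category containing $\mathrm{D}(\mathbf{A})$ as a dense triangulated subcategory, and this characterizes it uniquely (up to canonical equivalence) as the triangulated idempotent completion of $\mathrm{D}(\mathbf{A})$.

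Next I would upgrade this to a tensor triangulated equivalence. By Lemma~\ref{lmaictfp}, $\mathrm{D}(\mathbf{A}^{\natural})$ carries a tensor structure inherited from $\mathrm{D}(\mathrm{C}\mathbf{A})$, and the inclusion $\mathrm{D}(\mathbf{A}^{\natural}) \hookrightarrow \mathrm{D}(\mathrm{C}\mathbf{A})$ is a tensor functor by construction. By Proposition~\ref{propcetensex}, the embedding $\mathrm{D}(\mathbf{A}) \hookrightarrow \mathrm{D}(\mathrm{C}\mathbf{A})$ also preserves tensor products, so its factorization through $\mathrm{D}(\mathbf{A}^{\natural})$ is a tensor functor into $\mathrm{D}(\mathbf{A}^{\natural})$. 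Thus we have exhibited $\mathrm{D}(\mathbf{A}^{\natural})$ as a tensor triangulated category, together with a dense tensor embedding of $\mathrm{D}(\mathbf{A})$ into an idempotent complete target.

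Finally I would invoke \cite{balschlichidem}, which shows that the tensor triangulated structure on the idempotent completion $(\mathrm{D}(\mathbf{A}))^{\natural}$ is uniquely determined by the requirement that $\mathrm{D}(\mathbf{A}) \hookrightarrow (\mathrm{D}(\mathbf{A}))^{\natural}$ be a tensor functor. Applied to our situation, this uniqueness implies that the triangulated equivalence $\mathrm{D}(\mathbf{A}^{\natural}) \simeq (\mathrm{D}(\mathbf{A}))^{\natural}$ from the first step is in fact an equivalence of tensor triangulated categories. The main conceptual point—and the only step that requires more than bookkeeping—is the verification that the tensor embedding $\mathrm{D}(\mathbf{A}) \hookrightarrow \mathrm{D}(\mathrm{C}\mathbf{A})$ actually lands in $\mathrm{D}(\mathbf{A}^{\natural})$ as a tensor subcategory, which is precisely what is secured by Lemma~\ref{lmaictfp} together with Proposition~\ref{propcetensex}.
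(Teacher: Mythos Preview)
Your proposal is correct and follows essentially the same approach as the paper: both arguments use that $\mathrm{D}(\mathrm{C}\mathbf{A})$ is idempotent complete (via countable coproducts), that $\mathrm{D}(\mathbf{A}^{\natural})$ is the thick closure of $\mathrm{D}(\mathbf{A})$ therein, and that the embedding preserves tensor products by Proposition~\ref{propcetensex} together with Lemma~\ref{lmaictfp}. The only cosmetic difference is that the paper writes down the explicit equivalence $(a,e)\mapsto \im(e)$ and checks tensor preservation directly, whereas you invoke the uniqueness of the tensor structure on the idempotent completion; note that this uniqueness is recorded in \cite{balmer2005spectrum}*{Remark~3.12} rather than in \cite{balschlichidem} itself.
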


\begin{proof}
	This follows as $\mathrm{D}(\mathrm{C}\mathbf{A})$ is idempotent complete (since it has countable coproducts by \cite{schlichtingnegative}*{Proposition 4.4}) and $\mathrm{D}\left(\mathbf{A}^{\natural}\right)$ is the thick closure of $\mathrm{D}(\mathbf{A})$ in $\mathrm{D}(\mathrm{C}\mathbf{A})$. The equivalence is explicitly given by sending a pair $(a,e)$ in $\mathrm{D}\left(\mathbf{A}\right)^{\natural}$, with $a$  an object of $\mathrm{D}\left(\mathbf{A}\right)$ and $e: a \to a$ an idempotent endomorphism, to $\im(e) \in \mathrm{D}\left(\mathbf{A}^{\natural}\right)$. We see that this equivalence preserves the tensor product, as the embedding $\mathrm{D}\left(\mathbf{A}\right) \to \mathrm{D}\left(\mathbf{A}^{\natural}\right)$ preserves tensor products by Proposition \ref{propcetensex}.
\end{proof}

\begin{lma}
	The assignment $\mathbf{A} \mapsto \mathbf{A}^{\natural}$ is functorial for maps of Frobenius pairs.
	\label{lmaidemfunc}
\end{lma}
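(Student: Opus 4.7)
The plan is to reduce functoriality of $(-)^{\natural}$ to the (presumably established in the appendix) functoriality of the countable envelope $\mathrm{C}(-)$ on Frobenius pairs. Given a map $F: \mathbf{A} \to \mathbf{A}'$ of Frobenius pairs, I would apply the functoriality of $\mathrm{C}(-)$ to obtain a map $\mathrm{C}F: \mathrm{C}\mathbf{A} \to \mathrm{C}\mathbf{A}'$ of Frobenius pairs, and then check that its underlying functor restricts appropriately to the full subcategories used to define the Frobenius models for the idempotent completions.

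More precisely, write $\mathbf{A}^{\natural} = (\mathcal{B},\mathrm{C}\mathcal{A}_0)$ and $(\mathbf{A}')^{\natural} = (\mathcal{B}',\mathrm{C}\mathcal{A}'_0)$, where $\mathcal{B} \subset \mathrm{C}\mathcal{A}$ (resp.\ $\mathcal{B}'$) consists of the objects isomorphic in $\mathrm{D}(\mathrm{C}\mathbf{A})$ (resp.\ $\mathrm{D}(\mathrm{C}\mathbf{A}')$) to an object of $\overline{\mathrm{D}(\mathbf{A})}$ (resp.\ $\overline{\mathrm{D}(\mathbf{A}')}$). The induced exact functor $\mathrm{D}(\mathrm{C}F): \mathrm{D}(\mathrm{C}\mathbf{A}) \to \mathrm{D}(\mathrm{C}\mathbf{A}')$ fits into a commutative square with the embeddings $\mathrm{D}(\mathbf{A}) \hookrightarrow \mathrm{D}(\mathrm{C}\mathbf{A})$, $\mathrm{D}(\mathbf{A}') \hookrightarrow \mathrm{D}(\mathrm{C}\mathbf{A}')$ of \cite{schlichtingnegative}*{Proposition~4.4} and the derived functor $\mathrm{D}(F): \mathrm{D}(\mathbf{A}) \to \mathrm{D}(\mathbf{A}')$; this compatibility is really the content of naturality of the embedding $\mathbf{A} \to \mathrm{C}\mathbf{A}$ (Remark \ref{remyonedfactorscountable}). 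Consequently $\mathrm{D}(\mathrm{C}F)$ sends $\mathrm{D}(\mathbf{A})$ into $\mathrm{D}(\mathbf{A}') \subset \overline{\mathrm{D}(\mathbf{A}')}$, and since an exact functor between triangulated categories sends thick closures to thick closures, it also sends $\overline{\mathrm{D}(\mathbf{A})}$ into $\overline{\mathrm{D}(\mathbf{A}')}$. Chasing this through the respective localization functors shows that the underlying functor of $\mathrm{C}F$ carries $\mathcal{B}$ into $\mathcal{B}'$; combined with the fact that $\mathrm{C}F$ already carries $\mathrm{C}\mathcal{A}_0$ into $\mathrm{C}\mathcal{A}'_0$ and preserves projective-injective objects (because it is a map of Frobenius pairs), this yields a map of Frobenius pairs $F^{\natural}: \mathbf{A}^{\natural} \to (\mathbf{A}')^{\natural}$.

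Finally, the identities $(\id_{\mathbf{A}})^{\natural} = \id_{\mathbf{A}^{\natural}}$ and $(G \circ F)^{\natural} = G^{\natural} \circ F^{\natural}$ are inherited directly from the corresponding identities for the countable envelope, since $F^{\natural}$ is by construction simply the restriction of the underlying functor of $\mathrm{C}F$ to $\mathcal{B}$. The main obstacle I foresee is checking the compatibility square between $\mathrm{D}(\mathrm{C}F)$, $\mathrm{D}(F)$ and the Yoneda-type embeddings $\mathrm{D}(\mathbf{A}) \hookrightarrow \mathrm{D}(\mathrm{C}\mathbf{A})$: one has to unpack the construction of the embedding in the appendix and verify that it is natural in $\mathbf{A}$. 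Once this is in place, everything else is a straightforward bookkeeping exercise.
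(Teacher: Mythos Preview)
Your proposal is correct and follows essentially the same approach as the paper: both reduce to the functoriality of the countable envelope $\mathrm{C}(-)$ (which the paper cites from \cite{schlichtingnegative}*{Definition 4.3}) and then argue that $\mathrm{C}F$ restricts to a map $\mathbf{A}^{\natural} \to (\mathbf{A}')^{\natural}$. The only difference is one of detail: where you carefully verify the restriction via the commutative square of derived categories and preservation of thick closures, the paper compresses this into a single phrase (``by the additivity of $\mathrm{C}m$''), which is the same observation that an additive exact functor carries direct summands to direct summands and hence thick closures to thick closures.
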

\begin{proof}
	The assignment $\mathbf{A} \mapsto \mathrm{C}\mathbf{A}$ is functorial (see \cite{schlichtingnegative}*{Definition 4.3}) and so a map of Frobenius pairs $m: \mathbf{A} \to \mathbf{B}$ gives a map $\mathrm{C}m: \mathrm{C}\mathbf{A} \to \mathrm{C}\mathbf{B}$. By the additivity of $\mathrm{C}m$ it follows that its restriction to $\mathbf{A}^{\natural}$ maps into $\mathbf{B}^{\natural}$ which proves the lemma.
\end{proof}
As a consequence of Lemma \ref{idempmodel}, we now have a Frobenius model for $\left(\mathrm{D}(\mathbf{A})\right)^{\natural}$ at our disposal.

\subsection{Higher and negative algebraic $\mathrm{K}$-theory of a Frobenius pair} 
Let $\mathbf{A} = (\mathcal{A},\mathcal{A}_0)$ be a Frobenius pair. In \cite{schlichtingnegative}*{Section 11}, Schlichting defines a $\mathrm{K}$-theory spectrum $\mathbb{K}(\mathbf{A})$ for $\mathbf{A}$ that we will use in the following. The associated $\mathrm{K}$-groups of $\mathbf{A}$ are given as follows (see \cite{schlichtingnegative}*{Theorem 11.7}): 
\begin{itemize}
	\item For $i>0$, the groups $\mathbb{K}_i(\mathbf{A})$ are the Waldhausen $\mathrm{K}$-groups of $\mathbf{A}$. That is, we make $\mathcal{A}$ into a category with cofibrations and weak equivalences by declaring the cofibrations to be the inflations of $\mathcal{A}$ and the weak equivalences those morphisms that become isomorphisms in $\mathrm{D}(\mathbf{A})$. Then $\mathbb{K}_i(\mathbf{A})$ is the $i$-th Waldhausen $\mathrm{K}$-group $\mathrm{K}_i^W(\mathcal{A})$ of the category with cofibrations and weak equivalences $\mathcal{A}$.
	\item $\mathbb{K}_0(\mathbf{A}) = \mathrm{K}_0\left(\mathrm{D}(\mathbf{A})^{\natural}\right)$.
	\item For $i<0$ one defines $\mathbb{K}_{i}(\mathbf{A})$ as follows: Let $\mathrm{S}_0\mathbf{A}$ denote the full subcategory of $\mathrm{C}\mathcal{A}$ consisting of all objects in the kernel of the Verdier quotient functor 
	\[\mathrm{D}(\mathrm{C}\mathbf{A}) \to \mathrm{D}(\mathrm{C}\mathbf{A})/\mathrm{D}(\mathbf{A})~.\] 
	The suspension $\mathrm{S}\mathbf{A}$ of $\mathbf{A}$ is defined as the Frobenius pair $(\mathrm{C}\mathcal{A}, \mathrm{S}_0\mathbf{A})$, and for $n \geq 1$, $\mathrm{S}^n\mathbf{A}$ denotes the Frobenius pair obtained from $\mathbf{A}$ by applying the suspension construction $n$ times. For $i<0$, Schlichting (see \cite{schlichtingnegative}*{Definition 4.7}) defines 
	\[\mathbb{K}_{i}(\mathbf{A}) := \Kzero(\mathrm{S}^{-i}\mathbf{A})~.\] 
\end{itemize}

One then obtains long exact localization sequences. Let
\[\mathbf{B} \to \mathbf{A} \to \mathbf{C}\] 
be an exact sequence of Frobenius pairs, i.e.\ one such that the induced sequence
\[\mathrm{D}(\mathbf{B}) \to \mathrm{D}(\mathbf{A}) \to \mathrm{D}(\mathbf{C})\] 
is exact up to factors: the composition is zero, the functor $\mathrm{D}(\mathbf{B}) \to \mathrm{D}(\mathbf{A})$ is fully faithful and the induced functor 
\[\mathrm{D}(\mathbf{A})/\mathrm{D}(\mathbf{B}) \to \mathrm{D}(\mathbf{C})\] 
is cofinal. Then we obtain a long exact localization sequence
\[\cdots \to \mathbb{K}_{p}(\mathbf{B}) \to \mathbb{K}_{p}(\mathbf{A}) \to \mathbb{K}_{p}(\mathbf{C}) \to \mathbb{K}_{p-1}(\mathbf{B}) \to \cdots \]
for all $p \in \mathbb{Z}$ (see \cite{schlichtingnegative}*{Theorem 11.10}). 

\begin{rem}
	Assume that $\mathcal{T} = \mathrm{D}(\mathbf{A})$ for a \emph{tensor} Frobenius pair $\mathbf{A}$, such that $\mathcal{T}$ is a tensor triangulated category. Let $\mathcal{J} \subset \mathcal{T}$ be a tensor ideal. Corollary \ref{corsubtfp} and Lemmas \ref{lmaictfp} and \ref{idempmodel} provide models for $\mathbf{B}$ and $\mathbf{C}$ for $\mathcal{J}$ and $(\mathcal{T}/\mathcal{J})^{\natural}$, respectively. The sequence of Frobenius pairs
	\[\mathbf{B} \to \mathbf{A} \to \mathbf{C}\]
	induces the sequence of derived categories
	\[\mathcal{J} \to \mathcal{T} \to (\mathcal{T}/\mathcal{J})^{\natural}\]
	which is exact up to factors. This gives us a long exact sequence in $\mathrm{K}$-theory
	\[\cdots \to \mathbb{K}_{p}(\mathbf{B}) \to \mathbb{K}_{p}(\mathbf{A}) \to \mathbb{K}_{p}\left(\mathbf{C}\right) \to \mathbb{K}_{p-1}(\mathbf{B}) \to \cdots ~.\]
\end{rem}

\begin{rem}
	An application of the localization sequence implies the following: if we are given two Frobenius pairs with equivalent derived categories, and the equivalence is induced by a functor on the level of Frobenius pairs (which need \emph{not} be an equivalence), then the $\mathrm{K}$-groups arising from the two different models will be isomorphic. This is why we informally think of the $\mathrm{K}$-theory of the Frobenius pair $\mathbf{A}$ as the $\mathrm{K}$-theory of the triangulated category $\mathrm{D}(\mathbf{A})$. One must be careful though: it is not true that any model of $\mathrm{D}(\mathbf{A})$ yields the same $\mathrm{K}$-theory (see \cite{schlichtingknote}).
\end{rem}

\section{$\mathrm{K}$-theory sheaves on the spectrum}
Let us start by proving a basic but useful lemma. Notice first that any dimension function on a tensor triangulated category $\mathcal{T}$ induces a dimension function on $\mathcal{T}^{\natural}$, as the inclusion $\mathcal{T} \hookrightarrow \mathcal{T}^{\natural}$ induces a homeomorphism of spectra (see Example \ref{exidcomptt}).
\begin{lma}
	Let $\mathcal{T}$ be an essentially small tensor triangulated category that is equipped with a dimension function $\dim$. Then for all $l \in \mathbb{Z}$, the subcategory
	\[\mathcal{T}_{(l)} \subset \left(\mathcal{T}^{\natural}\right)_{(l)}\]
	is dense. Therefore the inclusion induces an equivalence
	\[\left(\mathcal{T}_{(l)}\right)^{\natural} \cong \left(\mathcal{T}^{\natural}\right)_{(l)}~.\]
	\label{lmasubidemcom}
\end{lma}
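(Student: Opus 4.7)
The proof naturally splits into two parts. First, I would establish the density statement $\mathcal{T}_{(l)} \subset (\mathcal{T}^\natural)_{(l)}$. Given $a \in (\mathcal{T}^\natural)_{(l)}$, the density of $\mathcal{T}$ in $\mathcal{T}^\natural$ produces some $x \in \mathcal{T}$ with $x \cong a \oplus a'$ in $\mathcal{T}^\natural$, but $x$ itself need not lie in $(\mathcal{T}^\natural)_{(l)}$ because $\supp(a')$ is uncontrolled. The trick is to replace $x$ by $b := a \oplus \Sigma a$: since support is invariant under the shift, $\supp(b) = \supp(a) \cup \supp(\Sigma a) = \supp(a)$, so $\dim(\supp(b)) \leq l$ for free, and $a$ is plainly a direct summand of $b$.

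The main obstacle is to verify that $b = a \oplus \Sigma a$ is isomorphic to an honest object of $\mathcal{T}$, not merely of $\mathcal{T}^\natural$. I would obtain this from a direct cone computation. The splitting $x \cong a \oplus a'$ in $\mathcal{T}^\natural$ corresponds to an idempotent endomorphism $e \colon x \to x$ (a morphism of $\mathcal{T}$) with image $a$; in the decomposition $x = a \oplus a'$ the map $e$ reads as $\id_a \oplus 0_{a'}$, so its cone (which can be computed in $\mathcal{T}$ since $e$ is a morphism of $\mathcal{T}$) decomposes as $\mathrm{Cone}(\id_a) \oplus \mathrm{Cone}(0 \colon a' \to a') \cong 0 \oplus (a' \oplus \Sigma a')$. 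Thus $a' \oplus \Sigma a' \in \mathcal{T}$; applying the same argument to the complementary idempotent $1-e$ (whose image in $\mathcal{T}^\natural$ is $a'$) gives $a \oplus \Sigma a \in \mathcal{T}$. Combined with the dimension bound above, this shows $b \in \mathcal{T}_{(l)}$ and completes the density argument.

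For the second assertion, the inclusion $\mathcal{T}_{(l)} \hookrightarrow (\mathcal{T}^\natural)_{(l)}$ induces a functor on idempotent completions $(\mathcal{T}_{(l)})^\natural \to ((\mathcal{T}^\natural)_{(l)})^\natural$. Here $(\mathcal{T}^\natural)_{(l)}$ is thick in $\mathcal{T}^\natural$ (by the remark following Definition \ref{dimfuncdef}), hence is itself idempotent complete, so the target simplifies to $(\mathcal{T}^\natural)_{(l)}$. The resulting functor is fully faithful because idempotent completion preserves full faithfulness, and essentially surjective by the density just proved, yielding the claimed equivalence.
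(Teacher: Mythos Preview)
Your proof is correct and follows the same overall strategy as the paper: show that for any $a \in \mathcal{T}^{\natural}$ the object $a \oplus \Sigma a$ lies in $\mathcal{T}$, then observe that $\supp(a \oplus \Sigma a) = \supp(a)$ to conclude density. The only difference is in how the key fact $a \oplus \Sigma a \in \mathcal{T}$ is obtained. The paper invokes Thomason's classification of dense subcategories, which identifies $\mathcal{T}$ inside $\mathcal{T}^{\natural}$ as the preimage of $\Kzero(\mathcal{T}) \subset \Kzero(\mathcal{T}^{\natural})$, and then notes that $[a \oplus \Sigma a] = 0$. You instead give a direct cone computation: with $e$ the idempotent on $x \in \mathcal{T}$ projecting onto $a$, the cone of $1-e$ (computed in $\mathcal{T}$ by fullness) is isomorphic in $\mathcal{T}^{\natural}$ to $a \oplus \Sigma a$. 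Your route is more self-contained and avoids the classification theorem entirely, at the cost of a short explicit calculation; the paper's route is quicker to state but imports a nontrivial result. Your treatment of the second assertion is also slightly more detailed than the paper's, which simply appeals to the general fact that a dense inclusion induces an equivalence after idempotent completion.
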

\begin{proof}
	As $\mathcal{T}$ is dense in $\mathcal{T}^{\natural}$, for every object $a \in \mathcal{T}^{\natural}$, $a \oplus \Sigma(a) \in \mathcal{T}$. Indeed, this follows by Thomason's classification of dense subcategories (see \cite{thomasonclassification}) which gives
	\[\mathcal{T} = \left\lbrace a \in \mathcal{T}^{\natural}: [a] \in \Kzero(\mathcal{T}) \subset \Kzero\left(\mathcal{T}^{\natural}\right)\right\rbrace~.\]
	Given $b \in \left(\mathcal{T}^{\natural}\right)_{(l)}$, we have $\Sigma(b)\in \left(\mathcal{T}^{\natural}\right)_{(l)}$ as well and by our previous argument $b \oplus \Sigma(b) \in \mathcal{T}$. As 
	\[\dim(\supp(b \oplus \Sigma(b))) = \dim(\supp(b) \cup \supp(\Sigma(b))) = \dim(\supp(b)) \leq l~,\]
	it follows that $b \oplus \Sigma(b) \in \mathcal{T}_{(l)}$. This shows that every object of $\left(\mathcal{T}^{\natural}\right)_{(l)}$ is a direct summand of an object of $\mathcal{T}_{(l)}$ and therefore proves the claim.
\end{proof}

Before we define $\mathrm{K}$-theory sheaves on $\Spc(\mathcal{T})$ we fix some assumptions on $\mathcal{T}$ that we will need for the rest of the article.
\begin{conv}
	From now on we fix a tensor Frobenius pair $\mathbf{A} = (\mathcal{A},\mathcal{A}_0,\otimes)$ and let $\mathcal{T} = \mathrm{D}(\mathbf{A})$. We assume $\mathcal{T}$ to be essentially small, rigid, equipped with a dimension function $\dim$ and such that $\Spc(\mathcal{T})$ is noetherian. We also implicitly assume that whenever we look at a Verdier quotient $\mathcal{T}/\mathcal{T}_Z$ of $\mathcal{T}$ by a thick $\otimes$-ideal $\mathcal{T}_Z$, the category $\mathcal{T}/\mathcal{T}_Z$ is equipped with the dimension function $\dim|_U$ as in Construction \ref{constrresdim}. In this situation, we will denote by $\mathcal{T}_U$ the category $(\mathcal{T}/\mathcal{T}_Z)^{\natural}$.
	\label{convbasicass}
\end{conv}

\begin{dfn}
	For any $p \in \mathbb{Z}_{\geq 0}, l \in \mathbb{Z}$, the sheaf $\mathscr{K}^{(l)}_p$ on $\Spc{\mathcal{T}}$ is defined as the sheaf associated to the presheaf
	\[U \mapsto \mathbb{K}_p\left((\mathbf{A}_U)_{(l)}\right)\]
	for an open $U \subset \Spc(\mathcal{T})$ with complement $Z$. Here, $(\mathbf{A}_U)_{(l)}$ is the Frobenius pair obtained from $\mathbf{A}$ by subsequently taking models for the Verdier quotient $\mathcal{T}/\mathcal{T}_Z$, then for the triangulated subcategory $\left(\mathcal{T}/\mathcal{T}_Z\right)_{(l)}$ and finally for the idempotent completion $\left(\left(\mathcal{T}/\mathcal{T}_Z\right)_{(l)}\right)^{\natural} \cong \left(\mathcal{T}_U\right)_{(l)}$ (by Lemma \ref{lmasubidemcom}), as described in Section \ref{sectionmodels}. By construction, we then have 
	\[\mathrm{D}((\mathbf{A}_U)_{(l)}) = \left(\mathcal{T}_U\right)_{(l)}~.\]
	The restriction map $\mathscr{K}^{(l)}_p(U) \to \mathscr{K}^{(l)}_p(V)$ for two opens $V \subset U \subset \Spc(\mathcal{T})$ with complements $W \supset Z$ respectively is induced in the following way: the Frobenius pair that models $\left(\mathcal{T}/\mathcal{T}_Z\right)_{(l)}$ is given by $(\mathcal{A}^U_{(l)},\mathcal{A}_Z)$, where $\mathcal{A}_Z$ is the full subcategory of $\mathcal{A}$ consisting of those objects that become isomorphic to objects of $\mathcal{T}_Z$ in $\mathrm{D}(\mathbf{A})$ and $\mathcal{A}^U_{(l)}$ is the full subcategory of $\mathcal{A}$ consisting of those objects that become isomorphic to objects of $\left(\mathcal{T}/\mathcal{T}_Z\right)_{(l)}$ in $\mathrm{D}((\mathcal{A},\mathcal{A}_Z)) =  \mathcal{T}/\mathcal{T}_Z$. Using Lemma \ref{lmareshasreldim0}, we see that there is a map of Frobenius pairs
	\[(\mathcal{A}^U_{(l)},\mathcal{A}_Z) \to (\mathcal{A}^V_{(l)},\mathcal{A}_W)\]
	given by inclusion. After applying idempotent completion as in Lemma \ref{lmaidemfunc} we obtain a map of Frobenius pairs 
	\[(\mathbf{A}_U)_{(l)} \to (\mathbf{A}_V)_{(l)}\]
	which induces the restriction map.
	
	Similarly for any $p \in \mathbb{Z}_{\geq 0}, l \in \mathbb{Z}$, we define the sheaves $\mathscr{K}^{(l/l-1)}_p$ on $\Spc(\mathcal{T})$ as the sheaves associated to the presheaves
	\[U \mapsto \mathbb{K}_p\left((\mathbf{A}_U)_{(l)/(l-1)}\right)\]
	for an open $U \subset \Spc(\mathcal{T})$ with complement $Z$. Here, $(\mathbf{A}_U)_{(l)/(l-1)}$ is the Frobenius pair associated to the subquotient $\left((\mathcal{T}_U)_{(l)}/(\mathcal{T}_U)_{(l-1)}\right)^{\natural}$ of $\mathcal{T}$, given as $(\mathcal{A}^U_{(l)},\mathcal{A}^U_{(l-1)})^{\natural}$. By construction, \[\mathrm{D}\left(\left(\mathcal{A}^U_{(l)},\mathcal{A}^U_{(l-1)}\right)\right) \cong \left(\mathcal{T}/\mathcal{T}_Z\right)_{(l)}/\left(\mathcal{T}/\mathcal{T}_Z\right)_{(l-1)}\]
	and thus we indeed have
	\begin{align*}
	\mathrm{D}\left(\left(\mathcal{A}^U_{(l)},\mathcal{A}^U_{(l-1)}\right)^{\natural}\right) &\cong \left(\left(\mathcal{T}/\mathcal{T}_Z\right)_{(l)}/\left(\mathcal{T}/\mathcal{T}_Z\right)_{(l-1)} \right)^{\natural}\\ 
	&\cong \left(\left(\mathcal{T}/\mathcal{T}_Z\right)_{(l)}^{\natural}/\left(\mathcal{T}/\mathcal{T}_Z\right)_{(l-1)}^{\natural} \right)^{\natural} \\
	&\cong \left((\mathcal{T}_U)_{(l)}/(\mathcal{T}_U)_{(l-1)}\right)^{\natural}
	\end{align*}
	by \cite{balmerfiltrations}*{Proposition 1.13} and Lemma \ref{lmasubidemcom}. For an open $V \subset U$, there is a map of Frobenius pairs
	\[\left(\mathcal{A}^U_{(l)},\mathcal{A}^U_{(l-1)}\right) \to \left(\mathcal{A}^V_{(l)},\mathcal{A}^V_{(l-1)}\right)\]
	given by inclusion. Again, after applying idempotent completion as in Lemma \ref{lmaidemfunc} we obtain a map of Frobenius pairs 
	\[(\mathbf{A}_U)_{(l)/(l-1)} \to (\mathbf{A}_V)_{(l)/(l-1)}\]
	which induces the restriction map.
	\label{ksheafdef}
\end{dfn}

The next result is a key instrument for the constructions of the following sections, as it shows that we can use the sheaves $\mathscr{K}^{(l/l-1)}_p$ to calculate cohomology.
\begin{prop}
	For any $p \in \mathbb{Z}_{\geq 0}, l \in \mathbb{Z}$, the sheaves $\mathscr{K}^{(l/l-1)}_p$ are flasque.
	\label{propflasque}
\end{prop}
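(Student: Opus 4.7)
The plan is to establish flasqueness by identifying $\mathscr{K}^{(l/l-1)}_p$ with a direct sum of skyscraper sheaves at the points of dimension $l$ in $\Spc(\mathcal{T})$. The key input is Theorem \ref{thmfiltdecomp}, applied relatively over each open $U \subset \Spc(\mathcal{T})$.

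More precisely, for an open $U \subset \Spc(\mathcal{T})$ with complement $Z$, the category $\mathcal{T}_U = (\mathcal{T}/\mathcal{T}_Z)^{\natural}$ is rigid with noetherian spectrum $U$ and carries the induced dimension function $\dim|_U$. Theorem \ref{thmfiltdecomp} therefore gives an equivalence
\[\mathrm{D}((\mathbf{A}_U)_{(l)/(l-1)}) \cong \left((\mathcal{T}_U)_{(l)}/(\mathcal{T}_U)_{(l-1)}\right)^{\natural} \cong \coprod_{\substack{P \in U \\ \dim(P) = l}} \Min(\mathcal{T}_P),\]
using $(\mathcal{T}_U)_P \cong \mathcal{T}_P$ for $P \in U$. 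I would then lift this to K-theory: for $V \subset U$ open, writing $W := U \setminus V$ (closed in $U$), the partition of the index set $\{P \in U : \dim(P) = l\}$ into points in $V$ and points in $W$ splits the localization sequence of Frobenius pairs $\mathbf{B} \to (\mathbf{A}_U)_{(l)/(l-1)} \to (\mathbf{A}_V)_{(l)/(l-1)}$, where $\mathbf{B}$ models the subcategory of dimension-$l$ objects supported on $W$. The induced long exact K-theory sequence then degenerates into split short exact sequences, yielding
\[\mathbb{K}_p((\mathbf{A}_U)_{(l)/(l-1)}) \cong \bigoplus_{\substack{P \in U \\ \dim(P) = l}} \mathbb{K}_p(\mathbf{A}_P),\]
where $\mathbf{A}_P$ is a Frobenius model for $\Min(\mathcal{T}_P)$, and the restriction map to $V$ becomes the projection onto the subsum indexed by $P \in V$.

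Having this, I would identify $\mathscr{K}^{(l/l-1)}_p$ with $\bigoplus_{P : \dim(P)=l} (i_P)_* \mathbb{K}_p(\mathbf{A}_P)$, a direct sum of skyscraper sheaves indexed by the points of dimension $l$. Using the noetherianness of $\Spc(\mathcal{T})$ to reduce to finite covers, the presheaf $U \mapsto \bigoplus_{P \in U,\, \dim(P) = l} \mathbb{K}_p(\mathbf{A}_P)$ already satisfies the sheaf condition and hence agrees with its sheafification. Each skyscraper $(i_P)_* \mathbb{K}_p(\mathbf{A}_P)$ is trivially flasque (a section over $V$ extends either as the same element to any $U \supset V$ with $P \in U$, or by zero otherwise), and on a noetherian topological space arbitrary direct sums of flasque sheaves remain flasque, giving the desired conclusion.

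The main obstacle is the K-theoretic lift of the coproduct decomposition: Theorem \ref{thmfiltdecomp} lives on the triangulated (and idempotent-completed) level, while the K-theory one needs is that of the Frobenius pair models. For $\mathbb{K}_0$ this is immediate from the definition $\mathbb{K}_0(\mathbf{A}) = \mathrm{K}_0(\mathrm{D}(\mathbf{A})^{\natural})$, but for higher and negative $\mathbb{K}_p$ one has to argue via Schlichting's localization sequence and exhibit an explicit splitting of the restriction $(\mathbf{A}_U)_{(l)/(l-1)} \to (\mathbf{A}_V)_{(l)/(l-1)}$ coming from the coproduct structure at the level of derived categories — this is what makes the argument work uniformly across all $p$.
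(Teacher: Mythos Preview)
Your overall strategy is the same as the paper's: identify the presheaf with $U \mapsto \bigoplus_{Q \in U,\ \dim(Q)=l} \mathbb{K}_p(\mathbf{Min}_Q)$, check it is already a sheaf using noetherianness of $\Spc(\mathcal{T})$, and conclude flasqueness since the restriction maps are projections. The difference lies in how the decomposition is lifted from derived categories to $\mathbb{K}$-theory.

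The paper does not go through a localization sequence and a splitting argument. Instead it observes that the equivalence of Theorem~\ref{thmfiltdecomp} is \emph{induced by a map of Frobenius pairs}: for each $Q \in U$ with $\dim(Q)=l$ there is an explicit inclusion $(\mathcal{A}^U_{(l)},\mathcal{A}^U_{(l-1)}) \to (\mathcal{A}_{\mathrm{Min}},\mathcal{A}_Q)$, and after idempotent completion the sum of these gives $\epsilon_U \colon (\mathbf{A}_U)_{(l)/(l-1)} \to \coprod_Q \mathbf{Min}_Q$. Since $\epsilon_U$ induces an equivalence on derived categories, $\mathbb{K}_p(\epsilon_U)$ is an isomorphism for all $p$ immediately (a map of Frobenius pairs that is a derived equivalence gives a $\mathbb{K}$-theory isomorphism). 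A single commutative square then shows that restriction to $V \subset U$ corresponds to the canonical projection onto the $V$-indexed summands.

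Your route via a localization sequence $\mathbf{B} \to (\mathbf{A}_U)_{(l)/(l-1)} \to (\mathbf{A}_V)_{(l)/(l-1)}$ is not wrong, but the step ``the sequence splits'' is exactly where the content sits. A splitting on the derived level does not by itself split the long exact $\mathbb{K}$-sequence; you need a section or retraction realized by a map of Frobenius pairs. Producing one amounts to constructing the projections $(\mathbf{A}_U)_{(l)/(l-1)} \to \mathbf{Min}_Q$ at the model level --- i.e.\ the paper's $\epsilon_U$. So your argument is correct in outline, but once made precise it collapses to the paper's more direct construction, which avoids the localization machinery altogether.
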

\begin{proof}
	We show that the presheaf
	\[U \mapsto \mathbb{K}_p\left((\mathbf{A}_U)_{(l)/(l-1)}\right)\]
	is already a sheaf and that it is flasque. The main point here is that the equivalence 
	\begin{equation}
	\left((\mathcal{T}_U)_{(l)}/(\mathcal{T}_U)_{(l-1)}\right)^{\natural} \cong \coprod_{\substack{Q \in U \\ \dim(Q) = l}} \Min(\mathcal{T}_Q)
	\label{eqbaldecomp}
	\end{equation}
	from Theorem \ref{thmfiltdecomp} is induced on the level of Frobenius models. 
		
	For $Q \in U, \dim(P) = l$, the Frobenius pair associated to $\Min(\mathcal{T}_Q)$ is constructed as follows: we let $\mathcal{A}_Q \subset \mathcal{A}$ be the full subcategory of those objects becoming isomorphic to objects of $Q \subset \mathcal{T}$ in $\mathrm{D}(\mathbf{A}) = \mathcal{T}$. Let $\mathcal{A}_{\mathrm{Min}} \subset \mathcal{A}$ be the full subcategory of objects that become isomorphic to objects with minimal support in $\mathrm{D}\left((\mathcal{A},\mathcal{A}_Q)\right) = \mathcal{T}/Q$. The Frobenius model we use for $\Min(\mathcal{T}_Q)$ is then given as $(\mathcal{A}_{\mathrm{Min}},\mathcal{A}_Q)^{\natural}$ which we will denote by $\mathbf{Min}_Q$. Indeed, by construction we have
	\[\mathrm{D}\left(\mathbf{Min}_Q\right) = \mathrm{D}\left((\mathcal{A}_{\mathrm{Min}},\mathcal{A}_Q)^{\natural}\right) \cong (\mathrm{Min}(\mathcal{T}/Q))^{\natural} \cong \mathrm{Min}(\mathcal{T}_Q) ~.\]
	The last equivalence follows by Lemma \ref{lmasubidemcom} as $\mathrm{Min}(\mathcal{T}/Q) = (\mathcal{T}/Q)_{(n)}$, where $n \in \mathbb{Z}$ is the dimension of the unique closed point of $\mathcal{T}/Q$.
	
	There is an inclusion $\mathcal{A}^U_{(l-1)} \subset \mathcal{A}_Q$ (see Definition \ref{ksheafdef}) by \cite{balmerfiltrations}*{Prop.~3.21}. We also have $\mathcal{A}^U_{(l)} \subset \mathcal{A}_{\mathrm{Min}}$ which implies that we get a map of Frobenius pairs
	\[(\mathcal{A}^U_{(l)},\mathcal{A}^U_{(l-1)}) \to (\mathcal{A}_{\mathrm{Min}},\mathcal{A}_Q)\]
	for all $Q \in U$, given by inclusion. After idempotent completion we obtain maps 
	\[(\mathbf{A}_U)_{(l)/(l-1)} \to \mathbf{Min}_Q\]
	and the sum of these maps for all $P \in U$
	\[\epsilon_U: (\mathbf{A}_U)_{(l)/(l-1)} \to \coprod_{\substack{Q \in U \\ \dim(Q) = l}} \mathbf{Min}_Q \]
	induces the equivalence (\ref{eqbaldecomp}) on the derived categories.
	
	As a consequence, we see that the sheaf $\mathscr{K}^{(l/l-1)}_p$ is given as the sheafification of the presheaf
	\[U \mapsto \coprod_{\substack{Q \in U \\ \dim(Q) = l}} \mathbb{K}_p\left(\mathbf{Min}_Q\right)~.\]
	Now, for two opens $V \subset U$ consider the diagram
	\[
	\xymatrix@=4em{
		(\mathbf{A}_U)_{(l)/(l-1)} \ar[d]^{\epsilon_U} \ar[r]^{\mathrm{res}} & (\mathbf{A}_V)_{(l)/(l-1)} \ar[d]^{\epsilon_V} \\
		\displaystyle\coprod\limits_{\substack{Q \in U \\ \dim(Q) = l}} \mathbf{Min}_Q \ar@<10pt>[r]^{\pi} & \displaystyle\coprod\limits_{\substack{Q \in V \\ \dim(Q) = l}} \mathbf{Min}_Q
	}
	\]
	where $\mathrm{res}$ is the restriction functor from Definition \ref{ksheafdef} and $\pi$ is the canonical projection. One checks that this square is commutative. The maps $\epsilon_U$ and $\epsilon_V$ become equivalences on the corresponding derived categories and therefore $\mathbb{K}_p(\epsilon_U), \mathbb{K}_p(\epsilon_V)$ become isomorphisms and the square commutes after applying $\mathbb{K}_p(-)$. It follows that the restriction maps of the presheaf
	\[U \mapsto \coprod_{\substack{Q \in U \\ \dim(Q) = l}} \mathbb{K}_p\left(\mathbf{Min}_Q\right)\]
	are given as the canonical projections.
	
	We now show that this presheaf is already a sheaf (and will therefore coincide with $\mathscr{K}^{(l/l-1)}_p$): from the nature of the restriction maps, it is clear that an element of the group $\mathbb{K}_p\left(\mathbf{A}_U)_{(l)/(l-1)}\right)$ with trivial restriction to an open cover must be trivial on $U$. Furthermore, if we are given an open covering $U = \bigcup\limits_{i \in I} V_i$ and $s_i \in \mathbb{K}_p\left((\mathbf{A}_{V_i})_{(l)/(l-1)}\right)$ with compatible restrictions to the mutual intersections, we can glue them together to an element $s \in \mathbb{K}_p\left(\mathbf{A}_U)_{(l)/(l-1)}\right)$: from the $s_i$ we know what the germ $s_P$ of $s$ at $P$ should be for every $P \in U$. In order to check that there are only finitely many non-zero $s_P$'s, we use that $\Spc(\mathcal{T})$ was assumed to be noetherian and thus finitely many $V_{i_1}, \ldots, V_{i_n}$ suffice to cover $U$. By definition, $(s_{i_j})_P = 0$ for all but finitely many $P \in V_{i_j}$ for $j =1 ,\ldots, n$. This implies that $s_P = 0$ for all but finitely many $P \in U$ and thus $s \in \mathbb{K}_p\left(\mathbf{A}_U)_{(l)/(l-1)}\right)$ as desired.
	
	The flasqueness of $\mathscr{K}^{(l/l-1)}_p$ now follows directly, as its restriction maps coincide with those of the presheaf, and these are clearly surjective.
\end{proof}

\section{Triangulated Gersten conjecture and triangulated Bloch formula}
\subsection{The triangulated Gersten conjecture} 
We stick to our assumptions from Convention \ref{convbasicass}. For any $l \in \mathbb{Z}$ and $U \subset \Spc(\mathcal{T})$ we have a sequence of Frobenius pairs
\[(\mathbf{A}_U)_{(l-1)} \to (\mathbf{A}_U)_{(l)} \to (\mathbf{A}_U)_{(l)/(l-1)}\]
which induces a sequence of tensor triangulated categories
\[(\mathcal{T}_U)_{(l-1)} \hookrightarrow (\mathcal{T}_U)_{(l)} \to \left((\mathcal{T}_U)_{(l)}/(\mathcal{T}_U)_{(l-1)}\right)^{\natural} \]
that is exact up to factors. Therefore we obtain localization sequences
\[\cdots \to \mathbb{K}_{p}\left((\mathbf{A}_U)_{(l)}\right) \to \mathbb{K}_{p}\left((\mathbf{A}_U)_{(l)/(l-1)}\right) \to \mathbb{K}_{p-1}\left((\mathbf{A}_U)_{(l-1)}\right) \to \cdots \]
which, by applying sheafification, give us a long exact sequence of sheaves
\begin{equation}
\cdots \to \mathscr{K}^{(l-1)}_{p} \to \mathscr{K}^{(l)}_{p} \to \mathscr{K}^{(l/l-1)}_{p} \to \mathscr{K}^{(l-1)}_{p-1} \to \cdots ~.
\label{longexactgersten}
\end{equation}

\begin{dfn}
	We say that \emph{the triangulated Gersten conjecture holds for the Frobenius pair $\mathbf{A}$ (see Convention \ref{convbasicass}) in bidegree $(l,p)$} for $(l,p)\in \mathbb{Z}^2$ if in the above long exact sequence (\ref{longexactgersten}), the map $\mathscr{K}^{(l-1)}_{p} \to \mathscr{K}^{(l)}_{p}$ vanishes.
	\label{gersten}
\end{dfn}

\begin{rem}
	Whether the triangulated Gersten conjecture holds for $\mathbf{A}$ might depend on the choice of dimension function for $\mathcal{T}$.
\end{rem}

\begin{rem}
	As we will see in Lemma \ref{gerstenexample}, the triangulated Gersten conjecture can be viewed as a generalization of the usual Gersten conjecture from algebraic $\mathrm{K}$-theory. Let us recall its statement:
	\begin{announceconj}[Gersten]
		Let $X$ be the spectrum of a regular local ring $R$. Let $\mathrm{M}_l(X)$ denote the category of coherent sheaves on $X$ with codimension of support~$\geq l$ with associated Quillen $\mathrm{K}$-groups $\mathrm{K}_p(\mathrm{M}_l(X))$ for~$p \geq 0$. Then the maps 
		\[\mathrm{K}_p(\mathrm{M}_{l+1}(X)) \to \mathrm{K}_p(\mathrm{M}_l(X))\]
		induced for all $p \geq 0$ by the inclusion $\mathrm{M}_{l+1}(X) \to \mathrm{M}_{l}(X)$ vanish.
	\end{announceconj}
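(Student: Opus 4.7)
The Gersten conjecture in full generality for arbitrary regular local rings is a long-standing open problem; my plan is to sketch Quillen's proof in the case he settled, namely $R = \mathcal{O}_{\bar X,x}$ for a smooth variety $\bar X$ over an infinite field $k$ and a closed point $x$. The general case lies beyond this purely geometric method: to cover mixed characteristic or to drop the smoothness-over-a-field hypothesis one has to invoke subsequent work of Gillet--Levine, Panin, Kerz and others, and no single unified argument is currently known.

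Under Quillen's hypotheses I would first reduce to a global statement on $\bar X$ by writing $X = \colim_{U \ni x} U$ as a filtered colimit of affine Zariski neighbourhoods of $x$, so that by continuity of Quillen $\mathrm{K}$-theory of abelian categories it suffices to show that the inclusion-induced map $\mathrm{K}_p(\mathrm{M}_{l+1}(U)) \to \mathrm{K}_p(\mathrm{M}_l(U))$ becomes zero in the colimit over shrinking $U$. Given a representative supported on a closed subscheme $Y \subset U$ of codimension $\geq l+1$ passing through $x$, the aim is then to exhibit, after further shrinking, an explicit nullhomotopy of the inclusion.

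The key tool is Quillen's \emph{geometric presentation lemma}: after shrinking $\bar X$ around $x$, one can find a finite morphism $\pi \colon \bar X \to \mathbb{A}^{d-1}_k$ whose restriction to $Y$ is still finite and which is smooth (even étale) at $x$. This realizes $\bar X$, near $x$, as a flat $\mathbb{A}^1$-family over $\mathbb{A}^{d-1}_k$ along which $Y$ is finite. Exploiting this ``extra $\mathbb{A}^1$-direction'', I would execute \emph{Quillen's trick}: construct an exact endofunctor of the category of modules on $\bar X$ supported on (a shrinking of) $Y$ whose translate along one section of $\pi$ agrees with the identity, and whose translate along another section is zero. Homotopy invariance of Quillen $\mathrm{K}$-theory for regular schemes together with the additivity theorem then force the induced map on $\mathrm{K}_p$ to equal both itself and zero, delivering the vanishing; an induction on $l$ covers the whole coniveau filtration. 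The main obstacle is precisely the presentation lemma itself: it genuinely needs the residue field to be infinite (the finite-field case is handled by a standard norm trick) and uses smoothness over a field in an essential way to arrange the finite projection transverse to $Y$ — which is exactly the reason the conjecture remains open in general.
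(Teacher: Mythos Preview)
The paper does not prove this statement: it is presented as the classical Gersten \emph{Conjecture} (in the \texttt{announceconj} environment), recalled only to motivate Definition~\ref{gersten}. The paper's sole commentary is that Quillen established it when $R$ is a regular local ring of a variety over a field and that Panin later removed the finite-type hypothesis; no argument is given or even sketched. So there is no ``paper's own proof'' against which your proposal can be compared.

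That said, your proposal is a reasonable outline of Quillen's original argument, and you are right to flag that the general case remains open. A few minor points: the presentation lemma produces a smooth morphism of relative dimension one (not necessarily \'etale) to an open in $\mathbb{A}^{d-1}_k$, and the finite-residue-field case is handled not by a norm trick but by a transfer/limit argument passing to an infinite extension. More substantively, since the paper only cites the conjecture and its known cases (Quillen, Panin) without proof, submitting a proof sketch here goes beyond what the paper itself does; the appropriate response is simply that the statement is quoted as a conjecture, not proved.
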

	The conjecture was proved by Quillen in \cite{quillenhigher} for the case that $R$ is a finitely generated algebra over a field, and later Panin \cite{panin} removed the finite generation hypothesis. Quillen uses his result in \cite{quillenhigher} to prove the \emph{Bloch formula}, which identifies the Chow groups of a non-singular variety $X$ with certain cohomology groups of $\mathrm{K}$-theory sheaves on $X$. We will use the triangulated Gersten conjecture for a similar purpose in Theorem \ref{bloch}. If we pass to sheafified Quillen $\mathrm{G}$-theory on $X$, the conjecture is equivalent to the vanishing of maps in the localization sequence associated to the coniveau filtration, similar to the requirement in Definition \ref{gersten}.
\end{rem}

\begin{rem}
If $\mathbf{A}$ satisfies the triangulated Gersten conjecture in bidegrees $(l,p)$ and ${(l,p-1)}$, then the long exact sequence (\ref{longexactgersten}) contains the short exact sequence
\begin{equation}
0 \to \mathscr{K}^{(l)}_{p} \to \mathscr{K}^{(l/l-1)}_{p} \to \mathscr{K}^{(l-1)}_{p-1} \to 0 ~.
\label{ses}
\end{equation}
\end{rem}

\subsection{The triangulated Bloch formula}
For any essentially small tensor triangulated category $\mathcal{L}$ equipped with a dimension function and $l \in \mathbb{Z}$, we can define sheaves of Grothendieck groups on $\Spc(\mathcal{L})$ as follows: let $\mathcal{F}^{l}(\mathcal{L})$ denote the sheaf associated to the presheaf
\[U \mapsto \Kzero\left((\mathcal{L}_U)_{(l)}\right))\]
and let $\mathcal{F}^{l/l-1}(\mathcal{L})$ denote the sheaf
\[U \mapsto \Kzero\left(\left((\mathcal{L}_U)_{(l)}/(\mathcal{L}_U)_{(l-1)}\right)^{\natural}\right)\]
so that we have $\mathcal{F}^{l}(\mathrm{D}(\mathbf{A})) = \mathscr{K}^{(l)}_0$ and $\mathcal{F}^{l/l-1}(\mathrm{D}(\mathbf{A})) = \mathscr{K}^{(l/l-1)}_0$ as special cases (see Definition~\ref{ksheafdef}). Note that for $\mathcal{F}^{l/l-1}(\mathcal{L})$, we don't need to sheafify by Proposition~\ref{propflasque}. There is also a map of sheaves
\begin{equation}
\beta: \mathcal{F}^{l}(\mathcal{L}) \to \mathcal{F}^{l/l-1}(\mathcal{L})
\end{equation}
which is obtained as the sheafification of a map of presheaves $\beta'$ induced by the composition of the Verdier localization functor and the inclusion into the idempotent completion: 
\[\beta'(U):  \Kzero\left((\mathcal{L}_U)_{(l)}\right)) \rightarrow  \Kzero\left((\mathcal{L}_U)_{(l)}/(\mathcal{L}_U)_{(l-1)}\right) \hookrightarrow \Kzero\left(\left((\mathcal{L}_U)_{(l)}/(\mathcal{L}_U)_{(l-1)}\right)^{\natural}\right) ~.\]
For $\mathcal{L} = \mathrm{D}(\mathbf{A})$, the map $\beta$ is the one of the localization sequence (\ref{longexactgersten}). We will be interested in the group of global sections
\begin{equation}
\Gamma(\im(\beta)) \subset \Gamma\left(\mathcal{F}^{l/l-1}(\mathcal{L})\right) = \Kzero\left((\mathcal{L}_{(l)}/\mathcal{L}_{(l-1)})^{\natural}\right) = \Cyc^{\Delta}_l(\mathcal{L})~,
\label{eqcyccontainment}
\end{equation}
where $\Cyc^{\Delta}_l(\mathcal{L})$ is the dimension $l$ tensor triangular cycle group of $\mathcal{L}$ from Definition \ref{defcycle}. The image of the map of presheaves $\beta'$ on the level of global sections is the subgroup
\[\Gamma(\im(\beta')) = \Kzero\left(\mathcal{L}_{(l)}^{\natural}/\mathcal{L}^{\natural}_{(l-1)}\right) \subset \Kzero\left((\mathcal{L}_{(l)}/\mathcal{L}_{(l-1)})^{\natural}\right)~.\]
As the presheaf $\im(\beta')$ is separated (it is, after all, a sub-presheaf of a sheaf), the natural map $\im(\beta') \to \im(\beta)$ from presheaf to sheafification is injective and thus we have an inclusion
\begin{equation}
j: \Gamma(\im(\beta')) = \Kzero\left(\mathcal{L}_{(l)}^{\natural}/\mathcal{L}^{\natural}_{(l-1)}\right) \hookrightarrow \Gamma(\im(\beta))
\label{eqinclidcomp}
\end{equation}
as well. Let $i: \Kzero(\mathcal{L}_{(l)}^{\natural}) \to \Kzero(\mathcal{L}_{(l+1)}^{\natural})$ be the map induced by the inclusion and $\phi: \Kzero(\mathcal{L}_{(l)}^{\natural}) \to \Kzero(\mathcal{L}_{(l)}^{\natural}/\mathcal{L}^{\natural}_{(l-1)})$ be the map induced by the Verdier quotient functor.

\begin{dfn}
	The \emph{$l$-dimensional $\cap$-cycle group} of $\mathcal{L}$ is defined as the group
	\[\prescript{}{\cap}\Cyc^{\Delta}_l(\mathcal{L}) := \Gamma(\im(\beta)) \subset  \Cyc^{\Delta}_l(\mathcal{L}) ~.\]
	The \emph{$l$-dimensional $\cap$-Chow group} $\mathcal{L}$ is defined as the quotient
	\[\prescript{}{\cap}\Chow^{\Delta}_l(\mathcal{L}) := \prescript{}{\cap}\Cyc^{\Delta}_l(\mathcal{L})/j \circ \phi(\ker(i))~.\]
	\label{dfnaltchow2}
\end{dfn}

\begin{rem}
	We will see in Theorem \ref{bloch} that these $\cap$-Chow groups show up in the cohomology of the sheaf $\mathscr{K}^{(0)}_p$ (see Definition \ref{ksheafdef}). 
	From Definition \ref{defchow}, it also follows that
	\[\prescript{}{\cap}\Chow^{\Delta}_l(\mathcal{L}) \subset \Chow^{\Delta}_{l}(\mathcal{L})~.\]
	When $\mathcal{L}_{(l)}^{\natural}/\mathcal{L}_{(l-1)}^{\natural}$ is idempotent complete already, it follows from (\ref{eqinclidcomp}) that
	\[\prescript{}{\cap}\Cyc^{\Delta}_l(\mathcal{L}) = \Cyc^{\Delta}_l(\mathcal{L}) \quad \text{and} \quad \prescript{}{\cap}\Chow^{\Delta}_l(\mathcal{L}) = \Chow^{\Delta}_{l}(\mathcal{L})~.\]
	\label{remcapnormcomp}
\end{rem}

\begin{ex}
	Let $X$ be a non-singular separated scheme of finite type over a field and $\mathcal{L} = \Dperf(X)$, the derived category of perfect complexes equipped with the opposite of the codimension of support as a dimension function. By Theorem \ref{agreementthm} we have that $\Cyc^{\Delta}_{-n}(\mathcal{L}) \cong \Cyc^{n}(X)$ and $\Chow^{\Delta}_{-n}(\mathcal{L}) \cong \Chow^{n}(X)$ for all $n \in \mathbb{Z}$. In this case we also have isomorphisms $\prescript{}{\cap}\Cyc^{\Delta}_{-n}(\mathcal{L}) \cong \Cyc^{\Delta}_{-n}(\mathcal{L})$ and $\prescript{}{\cap}\Chow^{\Delta}_{-n}(\mathcal{L}) \cong \Chow^{\Delta}_{-n}(\mathcal{L})$ by Remark \ref{remcapnormcomp} (see also Lemma \ref{lmachoweq}).
\end{ex}

We now assume that the dimension function $\dim$ for $\mathcal{T} = \mathrm{D}(\mathbf{A})$ is given as the opposite of the Krull codimension and furthermore, that the triangulated Gersten conjecture holds for $\mathbf{A}$ and for this choice of dimension function in bidegrees $(i,j)$ with $-p-2 \leq i \leq 0$ and $-1 \leq j \leq p$. Splicing the short exact sequences (\ref{ses}) together yields a partial flasque resolution of the sheaf $\mathscr{K}^{(0)}_{p}$
\begin{equation}
\mathscr{K}^{(0)}_{p} \to \mathscr{K}^{(0/-1)}_{p} \to  \cdots \to \mathscr{K}^{(-p+1/-p)}_{1} \overset{\delta_1}{\longrightarrow} \mathscr{K}^{(-p/-p-1)}_{0} \overset{\delta_0}{\longrightarrow} \mathscr{K}^{(-p-1/-p-2)}_{-1}
\label{res}
\end{equation}
that we can use to calculate its cohomology.

\begin{thm}[Triangulated Bloch formula] 
	Assume that the dimension function $\dim$ for $\mathcal{T}$ is given as the opposite of the Krull codimension and that the triangulated Gersten conjecture holds for $\mathbf{A}$ and for this choice of dimension function in bidegrees $(i,j)$ with $-p-2 \leq i \leq 0$ and $-1 \leq j \leq p$. Then we have isomorphisms
	\[\prescript{}{\cap}\Chow^{\Delta}_{-p}(\mathcal{T}) \cong \mathrm{H}^p(\Spc(\mathcal{T}),\mathscr{K}^{(0)}_p)\]
	for all $p \in \mathbb{Z}$.
	\label{bloch}
\end{thm}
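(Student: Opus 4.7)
The plan is to compute $\mathrm{H}^{p}(\Spc(\mathcal{T}),\mathscr{K}^{(0)}_p)$ directly from the partial flasque resolution (\ref{res}). Since each term $\mathscr{K}^{(-k/-k-1)}_{p-k}$ is flasque by Proposition \ref{propflasque}, applying $\Gamma$ to (\ref{res}) produces a cochain complex whose cohomology in degree $p$ is $\ker(\Gamma(\delta_0))/\im(\Gamma(\delta_1))$. I will then identify this kernel with $\prescript{}{\cap}\Cyc^{\Delta}_{-p}(\mathcal{T})$ and this image with $j\circ\phi(\ker(i))$, whereupon Definition \ref{dfnaltchow2} produces the desired isomorphism.

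For the kernel, I recall that $\delta_0$ is constructed by splicing: it is the composition of the connecting morphism $\mathscr{K}^{(-p/-p-1)}_0 \to \mathscr{K}^{(-p-1)}_{-1}$ from (\ref{longexactgersten}) with the inclusion $\mathscr{K}^{(-p-1)}_{-1} \hookrightarrow \mathscr{K}^{(-p-1/-p-2)}_{-1}$ of (\ref{ses}). The Gersten hypothesis at $(-p-1,-1)$ makes the second arrow injective, and the hypothesis at $(-p,0)$ turns the relevant stretch of (\ref{longexactgersten}) into the short exact sequence $0 \to \mathscr{K}^{(-p)}_0 \xrightarrow{\beta} \mathscr{K}^{(-p/-p-1)}_0 \to \mathscr{K}^{(-p-1)}_{-1}$, so that $\ker(\delta_0) = \mathscr{K}^{(-p)}_0$. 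Taking global sections and using that $\Gamma(\beta)$ is injective, I get $\ker(\Gamma(\delta_0)) = \Gamma(\mathscr{K}^{(-p)}_0) \cong \Gamma(\im(\beta)) = \prescript{}{\cap}\Cyc^{\Delta}_{-p}(\mathcal{T})$ by Definition \ref{dfnaltchow2}.

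For the image, by Proposition \ref{propflasque} the presheaf $U \mapsto \mathbb{K}_1((\mathbf{A}_U)_{(-p+1)/(-p)})$ is already a sheaf, so $\Gamma(\mathscr{K}^{(-p+1/-p)}_1) = \mathbb{K}_1((\mathbf{A})_{(-p+1)/(-p)})$. The map $\delta_1$ factors as a connecting morphism $\mathscr{K}^{(-p+1/-p)}_1 \to \mathscr{K}^{(-p)}_0$ followed by $\beta$, so $\Gamma(\delta_1)$ factors accordingly. Via the commutative square relating a presheaf to its sheafification, the first factor $\Gamma(\mathscr{K}^{(-p+1/-p)}_1) \to \Gamma(\mathscr{K}^{(-p)}_0)$ agrees with the presheaf-level connecting map $\delta_1' : \mathbb{K}_1((\mathbf{A})_{(-p+1)/(-p)}) \to \Kzero(\mathcal{T}^{\natural}_{(-p)})$ post-composed with the canonical sheafification map $\eta : \Kzero(\mathcal{T}^{\natural}_{(-p)}) \to \Gamma(\mathscr{K}^{(-p)}_0)$. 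By exactness of the presheaf localization sequence, $\im(\delta_1') = \ker(i)$. Moreover, $\Gamma(\beta) \circ \eta$ coincides with the presheaf map $\beta'(\Spc(\mathcal{T}))$, which equals $j \circ \phi$ by construction (see the discussion preceding Definition \ref{dfnaltchow2}). Since $\Gamma(\beta)$ is injective, it identifies $\Gamma(\mathscr{K}^{(-p)}_0)$ with $\prescript{}{\cap}\Cyc^{\Delta}_{-p}(\mathcal{T})$, and under this identification $\im(\Gamma(\delta_1)) = \eta(\ker(i))$ becomes exactly $j \circ \phi(\ker(i))$.

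Combining the two identifications yields $\mathrm{H}^{p}(\Spc(\mathcal{T}),\mathscr{K}^{(0)}_p) \cong \prescript{}{\cap}\Cyc^{\Delta}_{-p}(\mathcal{T})/j \circ \phi(\ker(i)) = \prescript{}{\cap}\Chow^{\Delta}_{-p}(\mathcal{T})$. The step I expect to need the most care is the identification of $\im(\Gamma(\delta_1))$ with $j \circ \phi(\ker(i))$: because $\Gamma$ is not right-exact, this image cannot be read off the sheaf-level exactness alone and must be traced through the presheaf-to-sheafification diagram, using the compatibility of $\beta$ with the presheaf map $\beta'$ to match it against the definition of $\prescript{}{\cap}\Chow^{\Delta}_{-p}(\mathcal{T})$.
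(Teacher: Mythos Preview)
Your proof is correct and follows essentially the same strategy as the paper: compute $\mathrm{H}^p$ from the partial flasque resolution (\ref{res}), identify $\ker(\Gamma(\delta_0))$ with $\prescript{}{\cap}\Cyc^{\Delta}_{-p}(\mathcal{T})$ via left-exactness of $\Gamma$, and identify $\im(\Gamma(\delta_1))$ with $j\circ\phi(\ker(i))$ by tracing through the presheaf/sheafification comparison. The only notable difference is in the image computation: the paper observes directly that since both $\mathscr{K}^{(-p+1/-p)}_1$ and $\mathscr{K}^{(-p/-p-1)}_0$ are already sheaves (Proposition~\ref{propflasque}), the sheaf map $\delta_1 = \beta\circ\alpha$ literally equals the presheaf map $\beta'\circ\alpha'$, so $\Gamma(\delta_1)$ is immediately $\phi\circ\psi$ with $\im(\psi)=\ker(i)$; you instead factor through the intermediate sheaf $\mathscr{K}^{(-p)}_0$ and its sheafification map $\eta$, which is a slightly longer route to the same identity $\Gamma(\beta)\circ\eta = \beta'(\Spc(\mathcal{T})) = j\circ\phi$.
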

\begin{proof}
	We will use the partial flasque resolution (\ref{res}) of $\mathscr{K}^{(0)}_p$ to calculate the group $\mathrm{H}^p(\Spc(\mathcal{T}),\mathscr{K}^{(0)}_p)$.
	The maps 
	\[\mathscr{K}^{(-p+1/-p)}_{1} \to \mathscr{K}^{(-p/-p-1)}_{0} \to \mathscr{K}^{(-p-1/-p-2)}_{-1}\]
	are spliced together from the exact sequences (\ref{ses}) in the following way:
	
	\[
	\xymatrix{
		&				 & 				    & 0 \ar[d] 				 &  \\
		0 \ar[r] & \mathscr{K}^{(-p+1)}_{1} \ar[r] & \mathscr{K}^{(-p+1/-p)}_{1} \ar[r]^(0.6){\alpha} \ar[rd]_{\delta_1} & \mathscr{K}^{(-p)}_{0} \ar[r] \ar[d]^{\beta} & 0\\
		&				 & 				    & \mathscr{K}^{(-p/-p-1)}_{0} \ar[d]_{\gamma} \ar[rd]^{\delta_0}	 & \\
		&				 & 0 \ar[r]			    & \mathscr{K}^{(-p-1)}_{-1} \ar[d] \ar[r]_(0.4){\epsilon} &  \mathscr{K}^{(-p-1/-p-2)}_{-1}\\
		&				 & 				    & 0					 &  	&\\
	}
	\]
	In order to calculate cohomology, we apply the global section functor.
	As taking global sections is a left-exact functor, $\Gamma(\epsilon)$ is injective and so we have that \[\ker(\Gamma(\delta_0)) = \ker(\Gamma(\gamma)) = \Gamma(\ker(\gamma)) = \Gamma(\im(\beta)) = \prescript{}{\cap}\Cyc^{\Delta}_{-p}(\mathcal{T})~,\]
	again by left-exactness of the global section functor.

	Recall that the maps $\alpha, \beta$ are given as sheafifications of maps $\alpha', \beta'$ between the corresponding presheaves. By the functoriality of sheafification it follows that $\beta \circ \alpha$ is given as the sheafification of the composition $\beta' \circ \alpha'$. But $\beta' \circ \alpha'$ is already a map of sheaves and we therefore have that $\beta \circ \alpha = \beta' \circ \alpha'$. The map $\Gamma(\beta \circ \alpha)$ is therefore given as the composition of the maps
	\[\psi: \mathbb{K}_1\left((\mathbf{A}_X)_{(-p+1)/(-p)}\right) \to \Kzero\left(\mathcal{T}_{(-p)}^{\natural}\right)\]
	with $X= \Spc(\mathcal{T})$ and
	\[\phi: \Kzero\left(\mathcal{T}_{(-p)}^{\natural}\right) \to \Kzero\left(\mathcal{T}_{(-p)}^{\natural}/\mathcal{T}^{\natural}_{(-p-1)}\right)\]
	from the corresponding localization sequences. By the exactness of the localization sequence, $\im(\psi) = \ker(i)$ with 
	\[i: \Kzero\left(\mathcal{T}_{(-p)}^{\natural}\right) \to \Kzero\left(\mathcal{T}_{(-p+1)}^{\natural}\right)\] 
	as in Definition \ref{dfnaltchow2}. Thus, we obtain $\im(\Gamma(\beta \circ \alpha)) = \phi(\ker(i))$.
	
	By our previous calculations we conclude that
	\begin{align*}
	\mathrm{H}^p(\Spc(\mathcal{T}),\mathscr{K}^{(0)}_p) &= \ker(\Gamma(\delta_0))/\im(\Gamma(\delta_1)) \\
	&= \prescript{}{\cap}\Cyc^{\Delta}_{-p}(\mathcal{T})/j \circ \phi(\ker(i)) \\
	&= \prescript{}{\cap}{\Chow}^{\Delta}_{-p}(\mathcal{T})
	\end{align*}
	which was to be shown.
\end{proof}

\begin{rem}
	From the proof of Theorem \ref{bloch}, we can get a simpler definition of $\prescript{}{\cap}{\Cyc}^{\Delta}_{-p}(\mathcal{T})$, not using $\mathrm{K}$-theory sheaves. Namely, we see that the map of sheaves 
	\[\epsilon \circ \gamma: \mathscr{K}^{(-p/-p-1)}_0 \to \mathscr{K}^{(-p-1/-p-2)}_{-1}\] 
	can be computed on global sections as the composition of the maps
	\[\gamma': \Kzero\left(\mathcal{T}_{(-p)}/\mathcal{T}_{(-p-1)} \right)^{\natural} \to \mathbb{K}_{-1}\left((\mathbf{A}_{X})_{(-p-1)}\right)\]
	with $X = \Spc(\mathcal{T})$ and
	\[\epsilon': \mathbb{K}_{-1}\left((\mathbf{A}_{X})_{(-p-1)}\right) \to \mathbb{K}_{-1}\left((\mathbf{A}_X)_{(-p-1)/(-p-2)}\right)~,\]
	both coming from the corresponding long exact localization sequences. We therefore see that 
	\[\prescript{}{\cap}\Cyc^{\Delta}_{-p}(\mathcal{T}) = \Gamma(\im(\beta)) = (\gamma')^{-1}(\ker(\epsilon'))~.\]
	This reformulation of Definition \ref{dfnaltchow2} has the disadvantage that it needs tensor Frobenius pairs in order to talk about $\mathbb{K}_{-1}$ and it is not immediately visible that it is actually independent of a choice of such a tensor Frobenius pair.
\end{rem}

\section{The intersection product}
Recall our assumptions for $\mathcal{T}$ from Convention \ref{convbasicass}. We now let $\dim$ be the opposite of the Krull codimension and require furthermore that the triangulated Gersten conjecture holds for $\mathbf{A}$ in bidegrees $(i,j)$ with $-p-2 \leq i \leq 0$ and $-1 \leq j \leq p$.

First, let us recall a general well-known fact about cup products in sheaf cohomology (see \cite{bredonsheaf}*{Theorem 7.1 and Proposition 7.2}). Let $X$ be a topological space and $\mathcal{F}, \mathcal{G}$ be sheaves of abelian groups on $X$. Then there exists a unique associative bilinear product
\[\cup: \mathrm{H}^p(X,\mathcal{F}) \times \mathrm{H}^q(X,\mathcal{G}) \to \mathrm{H}^{p+q}(X,\mathcal{F} \otimes_{\mathbb{Z}} \mathcal{G}) \]
for all $p,q \in \mathbb{Z}_{\geq 0}$ such that for $p=q=0$, the product is the one induced by the tensor product $\Gamma(X,\mathcal{F}) \times \Gamma(X,\mathcal{G}) \to \Gamma(X,\mathcal{F} \otimes \mathcal{G})$ and the axioms of \cite{bredonsheaf}*{Theorem 7.1} are satisfied. The product $\cup$ is called the \emph{cup product}.

An application of Theorem \ref{bloch} then yields the following:
\begin{cor}
	Under the assumptions of Theorem \ref{bloch} and for $p,q \in \mathbb{Z}_{\geq 0}$ there are bilinear maps
	\[\prescript{}{\cap}\Chow^{\Delta}_{-p}(\mathcal{T}) \times \prescript{}{\cap}\Chow^{\Delta}_{-q}(\mathcal{T}) \to \mathrm{H}^{p+q}\left(\Spc(\mathcal{T}),\mathscr{K}^{(0)}_p \otimes_{\mathbb{Z}} \mathscr{K}^{(0)}_q\right) ~.\]
	\label{intersectionstep1}
\end{cor}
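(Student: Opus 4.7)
The statement is essentially a direct combination of two ingredients already at our disposal, so the proof plan is short. The plan is to first invoke Theorem \ref{bloch} to re-express each $\cap$-Chow group as a sheaf cohomology group on $\Spc(\mathcal{T})$, and then to feed the result into the cup product machinery recalled immediately before the corollary.

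More precisely, the assumption on the triangulated Gersten conjecture in bidegrees $(i,j)$ with $-p-2 \leq i \leq 0$ and $-1 \leq j \leq p$ guarantees the hypotheses of Theorem \ref{bloch} both for the index $p$ and (since it is a weaker requirement) for the index $q$ whenever $q \leq p$; by symmetry, taking the maximum of $p$ and $q$ the hypotheses hold for both simultaneously. Theorem \ref{bloch} then supplies natural isomorphisms
\[\prescript{}{\cap}\Chow^{\Delta}_{-p}(\mathcal{T}) \cong \mathrm{H}^p(\Spc(\mathcal{T}),\mathscr{K}^{(0)}_p), \qquad \prescript{}{\cap}\Chow^{\Delta}_{-q}(\mathcal{T}) \cong \mathrm{H}^q(\Spc(\mathcal{T}),\mathscr{K}^{(0)}_q).\]

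I then apply the cup product in sheaf cohomology recalled just above the statement, taking $X = \Spc(\mathcal{T})$, $\mathcal{F} = \mathscr{K}^{(0)}_p$ and $\mathcal{G} = \mathscr{K}^{(0)}_q$, to obtain the bilinear map
\[\cup \colon \mathrm{H}^{p}(\Spc(\mathcal{T}),\mathscr{K}^{(0)}_p) \times \mathrm{H}^{q}(\Spc(\mathcal{T}),\mathscr{K}^{(0)}_q) \to \mathrm{H}^{p+q}(\Spc(\mathcal{T}),\mathscr{K}^{(0)}_p \otimes_{\mathbb{Z}} \mathscr{K}^{(0)}_q).\]
Composing with the two isomorphisms from the Bloch formula yields the required bilinear map, and bilinearity is inherited from bilinearity of $\cup$.

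There is no real obstacle in this argument: the technical work has been done in Theorem \ref{bloch} and in the general theory of cup products in sheaf cohomology. The only subtlety worth a line is to make sure that the Gersten hypothesis, which is stated for bidegrees in a range depending on a single index, is strong enough to simultaneously produce the Bloch isomorphism in cohomological degrees $p$ and $q$, which is immediate by monotonicity of the range. The substantial content — constructing an actual intersection product with values in $\prescript{}{\cap}\Chow^{\Delta}_{-(p+q)}(\mathcal{T})$ — is deferred to the subsequent multiplication $\mathscr{K}^{(0)}_p \otimes \mathscr{K}^{(0)}_q \to \mathscr{K}^{(0)}_{p+q}$ promised in the introduction, and is not part of this corollary.
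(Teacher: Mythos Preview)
Your proposal is correct and matches the paper's approach exactly: the paper gives no explicit proof, introducing the corollary simply with ``An application of Theorem \ref{bloch} then yields the following,'' i.e.\ the Bloch isomorphisms followed by the cup product recalled immediately beforehand. Your additional remark about the Gersten hypothesis needing to cover both indices $p$ and $q$ is a reasonable clarification that the paper leaves implicit.
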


In order to construct the intersection product, we need a map $\mathscr{K}^{(0)}_p \otimes_{\mathbb{Z}} \mathscr{K}^{(0)}_q \to \mathscr{K}^{(0)}_{p+q}$, which will then induce the product map 
\[\mathrm{H}^{p+q}(\Spc(\mathcal{T}),\mathscr{K}^{(0)}_p \otimes_{\mathbb{Z}} \mathscr{K}^{(0)}_q) \to \mathrm{H}^{p+q}(\Spc(\mathcal{T}),\mathscr{K}^{(0)}_{p+q}) = \prescript{}{\cap}\Chow^{\Delta}_{-p -q}(\mathcal{T})\] 
It will be derived from a bilinear map on Waldhausen $\mathrm{K}$-theory induced by the tensor product.

\begin{lma}
	Let $\mathbf{A} = (\mathcal{A},\mathcal{A}_0, \otimes)$ be a tensor Frobenius pair. If we consider $\mathcal{A}$ as a Waldhausen category, then $\otimes$ is a biexact functor $\mathcal{A} \times \mathcal{A} \to \mathcal{A}$ in the sense of \cite{waldktheory}*{Section 1.5}.
	\label{tensorwaldexact}
\end{lma}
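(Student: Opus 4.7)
The plan is to verify the three conditions of Waldhausen biexactness (\cite{waldktheory}*{Section 1.5}) in turn, using the axioms of a tensor Frobenius pair (Definition \ref{tensorfrobdef}) together with Lemma \ref{derivedismonoidal}. Concretely, I need to show (a) that for every $A \in \mathcal{A}$ the functor $A \otimes -$ is exact as a functor of Waldhausen categories, (b) the analogous statement for $- \otimes A$, and (c) the pushout--product condition on pairs of cofibrations. Because the tensor structure on $\mathcal{A}$ is symmetric, (b) will follow from (a) by precomposing with the commutator isomorphism, so the real work lies in (a) and (c).

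For (a), recall that exactness in the Waldhausen sense requires preservation of the zero object, of cofibrations (= inflations), of pushouts along cofibrations, and of weak equivalences. Preservation of cofibrations is axiom (iii) of Definition \ref{tensorfrobdef}, and preservation of the zero object follows from additivity. Preservation of pushouts along an inflation $A' \rightarrowtail C$ rests on a standard exact-category fact: such a pushout along an arbitrary $f: A' \to B$ is the cokernel of the inflation $(i,-f): A' \rightarrowtail C \oplus B$ and hence fits into a conflation $A' \rightarrowtail C \oplus B \twoheadrightarrow B \cup_{A'} C$. Since $A \otimes -$ is additive and preserves conflations, applying it produces a conflation that identifies $A \otimes (B \cup_{A'} C)$ with $(A \otimes B) \cup_{A \otimes A'} (A \otimes C)$.

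The nontrivial step, and the one I expect to be the main obstacle, is preservation of weak equivalences, since a weak equivalence in $\mathcal{A}$ is defined in terms of $\mathrm{D}(\mathbf{A})$ whereas the tensor Frobenius pair axioms concern only the exact-category structure. Here I would invoke Lemma \ref{derivedismonoidal}, which says that the localization functor $q: \mathcal{A} \to \mathrm{D}(\mathbf{A})$ is a symmetric monoidal functor and $\mathrm{D}(\mathbf{A})$ is tensor triangulated. Then $q(A) \otimes -: \mathrm{D}(\mathbf{A}) \to \mathrm{D}(\mathbf{A})$ is an exact endofunctor of a triangulated category, so in particular it preserves isomorphisms. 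For any weak equivalence $f$ we therefore have $q(A \otimes f) \cong q(A) \otimes q(f)$ an isomorphism, showing that $A \otimes f$ is a weak equivalence.

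Finally, (c) is precisely axiom (vi) of Definition \ref{tensorfrobdef}: for two inflations $a \rightarrowtail a'$ and $b \rightarrowtail b'$ the canonical pushout--product map $a' \otimes b \amalg_{a \otimes b} a \otimes b' \to a' \otimes b'$ is an inflation, i.e.\ a cofibration. This completes the verification, with Lemma \ref{derivedismonoidal} carrying the sole non-formal step.
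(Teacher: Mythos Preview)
Your proposal is correct and follows essentially the same route as the paper: verify preservation of cofibrations via axiom (iii), preservation of weak equivalences via the monoidality of the passage to $\mathrm{D}(\mathbf{A})$, preservation of pushouts along inflations via exact-category generalities, and finally identify the pushout--product condition with axiom (vi). The only cosmetic differences are that the paper argues weak-equivalence preservation by noting $\mathrm{cone}(f)\in\underline{\mathcal{A}_0}$ is a tensor ideal (which is the content of Lemma \ref{derivedismonoidal} you invoke), and cites \cite{buehlerexact}*{Proposition 5.2} for pushout preservation where you sketch the conflation $A'\rightarrowtail C\oplus B\twoheadrightarrow B\cup_{A'}C$ directly.
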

\begin{proof}
	By assumption, $a \otimes -$ is an exact functor for all objects $a$ of $\mathcal{A}$ which implies that it preserves cofibrations, as those are just the inflations. Let $f: x \to y$ be a weak equivalence, i.e.\ a map that becomes an isomorphism after passing to $\mathrm{D}(\mathbf{A})$. This means that the object $\mathrm{cone}(f)$ of $\underline{\mathcal{A}}$ is in $\underline{\mathcal{A}_0}$. As $\underline{\mathcal{A}_0}$ is a tensor ideal in $\underline{\mathcal{A}}$ and passing from $\mathcal{A}$ to the stable category $\underline{\mathcal{A}}$ preserves tensor products, it follows that $\mathrm{id}_a \otimes f$ is an isomorphism in $\mathrm{D}(\mathbf{A})$ as well. Therefore $a \otimes - $ preserves weak equivalences. Finally, it is proved in \cite{buehlerexact}*{Proposition 5.2} that exact functors of exact categories preserve pushouts along inflations, which in our case means that pushouts along weak equivalences are preserved. Therefore, the functors $a \otimes -$ (and by symmetry $- \otimes a$) are exact in the sense of Waldhausen (see \cite{waldktheory}*{Section 1.5}).
	
	It remains to check that $\otimes$ satisfies the ``more technical condition'' of \cite{waldktheory}*{Section 1.5}. This asserts that for two cofibrations $\alpha: a \rightarrowtail a', \beta:b \rightarrowtail b'$ in the diagram
	\[
	\xymatrix{
		a \otimes b \ar@{>->}[r]^{\alpha \otimes \mathrm{id}_b} \ar@{>->}[d]_{\mathrm{id}_a \otimes \beta} & a' \otimes b \ar[d] \ar@/^1pc/[rdd]^{\mathrm{id}_{a'} \otimes \beta}& \\
		a \otimes b' \ar[r] \ar@/_1pc/[rrd]_{\alpha \otimes \mathrm{id}_{b'}}& a' \otimes b \coprod\limits_{a \otimes b} a \otimes b'  \ar@{.>}[rd]_{\phi}& \\
		& & a' \otimes b'
	}
	\]
	the arrow $\phi$ is a cofibration, i.e.\ an inflation. This is exactly axiom \ref{listtensorfrobdefpushprod} of Definition~\ref{tensorfrobdef}.
\end{proof}

A biexact functor $\otimes: \mathcal{A} \times \mathcal{A} \to \mathcal{A}$ in the above sense gives rise to bilinear maps
\[\mathbb{K}_p(\mathbf{A}) \otimes_{\mathbb{Z}}  \mathbb{K}_q(\mathbf{A}) \to \mathbb{K}_{p+q}(\mathbf{A})\]
for all $p,q \geq 0$ (see \cite{waldktheory}*{Section 1.5}). In particular, we obtain maps
\[\mathbb{K}_p(\mathbf{A}_U) \otimes_{\mathbb{Z}}  \mathbb{K}_q(\mathbf{A}_U) \to \mathbb{K}_{p+q}(\mathbf{A}_U)\]
as $\mathbf{A}_U:= (\mathbf{A}_U)_{(0)}$ inherits the structure of a tensor Frobenius pair from $\mathbf{A}$ by Corollary~\ref{corsubtfp} and Lemma~\ref{lmaictfp}. These maps sheafify to 
\[\mathscr{K}^{(0)}_p \otimes_{\mathbb{Z}} \mathscr{K}^{(0)}_q \to  \mathscr{K}^{(0)}_{p+q}.\]
and give us
\begin{equation}
\mathrm{H}^{p+q}\left(\Spc(\mathcal{T}), \mathscr{K}^{(0)}_p \otimes_{\mathbb{Z}} \mathscr{K}^{(0)}_q\right) \to \mathrm{H}^{p+q}\left(\Spc(\mathcal{T}), \mathscr{K}^{(0)}_{p+q}\right) = \prescript{}{\cap}\Chow^{\Delta}_{-p -q}(\mathcal{T})
\label{intersectionstep2}
\end{equation}
for all $p,q \geq 0$.

\begin{dfn} 
	Let $\mathbf{A}$ be a tensor Frobenius pair with derived category $\mathcal{T}$ that satisfies the assumptions of Theorem \ref{bloch}. For $p,q \in \mathbb{Z}_{\geq 0}$, the \emph{intersection product} is the bilinear map
	\[\alpha: \prescript{}{\cap}\Chow^{\Delta}_{-p}(\mathcal{T}) \otimes \prescript{}{\cap}\Chow^{\Delta}_{-q}(\mathcal{T}) \to \prescript{}{\cap}\Chow^{\Delta}_{-p -q}(\mathcal{T})\]
	that arises as the composition of the map in Corollary~\ref{intersectionstep1} and in (\ref{intersectionstep2}).
	\label{productdef}
\end{dfn}

\begin{rem}
	While the groups $\prescript{}{\cap}\Chow^{\Delta}_{n}(\mathcal{T})$ only depend on $\mathrm{D}(\mathbf{A}) = \mathcal{T}$, the product $\alpha$ of Definition \ref{productdef} might depend on the whole model $\mathbf{A}$.
\end{rem}

\begin{rem}
	Let $\mathbf{A},\mathbf{B}$ be two tensor Frobenius pairs satsifying the assumptions of Theorem \ref{bloch} and $F: \mathbf{A} \to \mathbf{B}$ a map of tensor Frobenius pairs (i.e.\ a map of Frobenius pairs that respects the tensor products up to natural isomorphism) such that the induced maps on the derived categories has relative dimension~0 (see \cite{kleinchow}*{Definition 4.1.1}), i.e.\ $\mathrm{D}(F)(\mathrm{D}(\mathbf{A})_{(p)}) \subset \mathrm{D}(\mathbf{B})_{(p)}$ for all $p \in \mathbb{Z}_{\geq 0}$. Then $F$ induces maps 
	\[\prescript{}{\cap}\Chow(F)_{-p}: \prescript{}{\cap}\Chow^{\Delta}_{-p}\left(\mathrm{D}(\mathbf{A})\right) \to \prescript{}{\cap}\Chow^{\Delta}_{-p}\left(\mathrm{D}(\mathbf{B})\right)\]
	for all $p \in \mathbb{Z}_{\geq 0}$ and there is a commutative diagram
	\[
	\xymatrix{
		\prescript{}{\cap}\Chow^{\Delta}_{-p}\left(\mathrm{D}(\mathbf{A})\right) \times \prescript{}{\cap}\Chow^{\Delta}_{-q}\left(\mathrm{D}(\mathbf{A})\right) \ar[r]^(0.6){\alpha_{\mathbf{A}}} \ar[d]_{\prescript{}{\cap}\Chow(F)_{-p} \times \prescript{}{\cap}\Chow(F)_{-q}} & \prescript{}{\cap}\Chow^{\Delta}_{-p-q}\left(\mathrm{D}(\mathbf{A})\right) \ar[d]^{\prescript{}{\cap}\Chow(F)_{-p-q}}\\
		\prescript{}{\cap}\Chow^{\Delta}_{-p}\left(\mathrm{D}(\mathbf{B})\right) \times \prescript{}{\cap}\Chow^{\Delta}_{-q}\left(\mathrm{D}(\mathbf{B})\right) \ar[r]^(0.6){\alpha_{\mathbf{B}}} &  \prescript{}{\cap}\Chow^{\Delta}_{-p-q}\left(\mathrm{D}(\mathbf{B})\right)
	}
	\]
	with $\alpha_{\mathbf{A}}, \alpha_{\mathbf{B}}$ the respective products from Definition \ref{productdef}. In this sense, the construction is functorial.
\end{rem}

\begin{rem}
	The author expects that properties of the product in Waldhausen $\mathrm{K}$-theory and the cup product in sheaf cohomology as in Corollary \ref{intersectionstep1} imply that the intersection product from Definition \ref{productdef} makes
	\[\bigoplus_{p \geq 0} \prescript{}{\cap}\Chow^{\Delta}_{-p}(\mathcal{T})\]
	a graded-commutative ring with unit the class of $\mathbb{I}$ in $\prescript{}{\cap}\Chow^{\Delta}_{0}(\mathcal{T})$.
	
	If $G = \mathbb{Z}/2\mathbb{Z} \times \mathbb{Z}/2\mathbb{Z}$ and $k$ is an algebraically closed field of characteristic $2$, the results in \cite{kleinchow}*{Section 6.4} show that
	\[\bigoplus_{p \geq 0} \prescript{}{\cap}\Chow^{\Delta}_{-p}(kG\stab) = \mathbb{Z}/2\mathbb{Z} \oplus \mathbb{Z}/2\mathbb{Z}~.\]
	The only possible commutative unital ring structure on this group, that also has a nilpotent element is $(\mathbb{Z}/2\mathbb{Z})[\epsilon]/(\epsilon^2)$. Thus, if the above assumption holds true, any choice of tensor Frobenius pair with derived category $kG\stab$ that satisfies the triangulated Gersten conjecture in the relevant degrees (if it exists) must yield the same intersection product.
\end{rem}

\section{Example: strict perfect complexes on a non-singular algebraic variety}
\label{theintproduct}
We now introduce the main example of Definition \ref{productdef} which justifies the name ``intersection product''. Let $X$ be a non-singular separated scheme of finite type over a field. Recall that a \emph{strict perfect complex} on $X$ is a bounded complex of locally free $\mathcal{O}_X$-modules of finite rank.
\begin{dfn}
	Let $\mathrm{sPerf}$ denote the category of strict perfect complexes on $X$ endowed with the following structure of exact category: a sequence of strict perfect complexes
	\[\mathcal{F}^{\bullet} \to \mathcal{G}^{\bullet} \to \mathcal{H}^{\bullet}\]
	is a conflation if it is degree-wise a split exact sequence. We denote the full subcategory of acyclic strict perfect complexes by $\mathrm{asPerf}$.
	\label{dfnperf}
\end{dfn}

\begin{lma}
	The triple $\mathbf{sPerf}=(\mathrm{sPerf},\mathrm{asPerf},\otimes_{\mathcal{O}_X})$ is a tensor Frobenius pair.
	\label{lmasperfpushprod}
\end{lma}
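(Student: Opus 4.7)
The plan is to verify the six axioms of Definition \ref{tensorfrobdef} in turn. Axiom (i) follows from the general result recalled in the example after Lemma \ref{lmafrobpairinclusion}: for any additive category $\mathcal{E}$, the bounded complexes in $\mathcal{E}$ form a Frobenius category when one declares the conflations to be short exact sequences of complexes that split in every degree, with projective-injective objects precisely the contractible complexes. Applying this to locally free $\mathcal{O}_X$-modules of finite rank yields that $\mathrm{sPerf}$ is Frobenius. Since $\mathrm{asPerf}$ is closed under extensions by degree-wise split conflations and contains every contractible complex, it inherits a Frobenius structure for which the inclusion $\mathrm{asPerf} \hookrightarrow \mathrm{sPerf}$ is strict, full, exact, and preserves projective-injectives. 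Axiom (ii) is immediate: the total tensor product of complexes with the usual Koszul sign in the commutator preserves $\mathrm{sPerf}$ because $\otimes_{\mathcal{O}_X}$ preserves both local freeness of finite rank and boundedness, with associator, unitor and commutator isomorphisms inherited from those on $\mathcal{O}_X$-modules, unit $\mathcal{O}_X$ in degree zero.

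Axioms (iii)--(v) all follow from flatness of locally free sheaves. For (iii), any additive functor preserves split exact sequences, so $a \otimes -$ preserves degree-wise split conflations. For (iv), if $h$ is a contracting homotopy for a contractible complex $b$, then $\mathrm{id}_a \otimes h$ (with the appropriate Koszul signs) contracts $a \otimes b$, so $a \otimes -$ preserves projective-injectives. For (v), if $b \in \mathrm{asPerf}$ and $a \in \mathrm{sPerf}$, a short induction on the length of $a$ using its stupid truncations reduces acyclicity of $a \otimes b$ to acyclicity of $a^i \otimes b$ for each $i$, which holds because each $a^i$ is flat.

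The one point requiring real work is the push-product axiom (vi). Both the pushout of degree-wise split inflations of bounded complexes and the tensor product are computed degree by degree, so the question reduces, for each $n$, to whether
\[\bigoplus_{i+j=n} \bigl( (a')^i \otimes b^j \coprod_{a^i \otimes b^j} a^i \otimes (b')^j \bigr) \longrightarrow \bigoplus_{i+j=n} (a')^i \otimes (b')^j\]
is a split monomorphism of $\mathcal{O}_X$-modules. Choosing $\mathcal{O}_X$-linear retractions of the given inflations, one obtains decompositions $(a')^i \cong a^i \oplus c^i$ and $(b')^j \cong b^j \oplus d^j$. Under these, $(a')^i \otimes (b')^j$ splits into four direct summands, and the push-product is canonically identified with the direct sum of the three that involve at least one factor from $a^i$ or $b^j$; projection onto the remaining summand $c^i \otimes d^j$ provides the required splitting. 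This bookkeeping is the only step where more than a one-line justification is needed, and so it is the main obstacle I expect to encounter.
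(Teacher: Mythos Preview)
Your proposal is correct and follows essentially the same approach as the paper. In particular, your treatment of the pushout product axiom---reducing to degree-wise data, using the chosen splittings to decompose $(a')^i \cong a^i \oplus c^i$ and $(b')^j \cong b^j \oplus d^j$, and identifying the pushout with three of the four summands of $(a')^i \otimes (b')^j$---matches the paper's argument almost verbatim; the paper phrases the splittings via degree-wise automorphisms rather than retractions, but this is only cosmetic. You actually supply more explicit justification for axioms (iii)--(v) than the paper does (the paper simply asserts that tensoring with a strict perfect complex is exact and leaves the rest implicit), so your write-up is, if anything, more thorough on those points.
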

\begin{proof}
	For an exact category $\mathcal{E}$, let $\mathrm{Ch}^b(\mathcal{E})$ denote the exact category of all bounded chain complexes over $\mathcal{E}$, with the conflations defined as the degree-wise split exact sequences. Let $\mathrm{Ac}^b(\mathcal{E}) \subset \mathrm{Ch}^b(\mathcal{E})$ denote full subcategory of acyclic complexes. In \cite{schlichtingnegative}*{Section 5.3}, it is shown that $(\mathrm{Ch}^b(\mathcal{E}), \mathrm{Ac}^b(\mathcal{E}))$ is a Frobenius pair. Thus, when we consider the full subcategory of locally free sheaves of finite rank in $\mathrm{Coh}(X)$ as an exact category, it follows that $(\mathrm{sPerf},\mathrm{asPerf})$ is a Frobenius pair. 
	
	It is clear that the tensor product of two strict perfect complexes is again strict perfect and as tensoring with a strict perfect complex is an exact functor, it follows that $\mathrm{asPerf}$ is a $\otimes_{\mathcal{O}_X}$-ideal. It remains to check that the pushout product axiom of Definition \ref{dfntensorfrobpair} holds true. Thus, let $f: A_{\bullet} \rightarrowtail X_{\bullet}$ and $g: B_{\bullet} \rightarrowtail Y_{\bullet}$ be two inflations in $\mathrm{sPerf}$. This means that for each $i \in \mathbb{Z}$ we have automorphisms $\alpha_i: X_i \to X_i$ and $\beta_i: Y_i \to Y_i$ such that $\alpha_i \circ f_i$ is a split injection $A_i \hookrightarrow A_i \oplus C_i$ and $\beta_i \circ g_i$ is a split injection $B_i \hookrightarrow B_i \oplus D_i$. The maps $f \otimes \id_{B_{\bullet}}$ and $\id_{A_{\bullet}} \otimes g$ are given componentwise as
	\begin{align*}
	(f \otimes \id_{B_{\bullet}})_{k}: \bigoplus_{i+j = k} A_i \otimes B_j & \to \bigoplus_{i+j = k} X_i \otimes B_j\\
	(\id_{A_{\bullet}} \otimes g)_{k}: \bigoplus_{i+j = k} A_i \otimes B_j & \to \bigoplus_{i+j = k} A_i \otimes Y_j
	\end{align*}
	and after post-composing with the isomorphisms consisting of diagonal matrices with entries $\alpha_i \otimes \id_{B_j}$ and $\id_{A_i} \otimes \beta_j$ respectively, we obtain split injections
	\begin{align*}
	\bigoplus_{i+j = k} A_i \otimes B_j & \to \bigoplus_{i+j = k} (A_i \otimes B_j) \oplus (C_i \otimes B_j)\\
	\bigoplus_{i+j = k} A_i \otimes B_j & \to \bigoplus_{i+j = k} (A_i \otimes B_j) \oplus (A_i \otimes D_j)~.
	\end{align*}
	We see that therefore 
	\[\left((A_{\bullet} \otimes Y_{\bullet}) \coprod_{A_{\bullet} \otimes B_{\bullet}} (X_{\bullet} \otimes B_{\bullet})\right)_k \cong \bigoplus_{i+j = k} (A_i \otimes B_j) \oplus (A_i \otimes D_j) \oplus (C_i \otimes B_j)\]
	Similarly, we see that 
	\[(X_{\bullet} \otimes Y_{\bullet})_k \cong \bigoplus_{i+j = k} (A_i \otimes B_j) \oplus (A_i \otimes D_j) \oplus (C_i \otimes B_j) \oplus (C_i \otimes D_j)\]
	and the induced map 
	\[\left((A_{\bullet} \otimes Y_{\bullet}) \coprod_{A_{\bullet} \otimes B_{\bullet}} (X_{\bullet} \otimes B_{\bullet})\right)_k \longrightarrow (X_{\bullet} \otimes Y_{\bullet})_k\]
	is given as the canonical inclusion, which is split. This shows that the pushout product axiom holds in $\mathrm{sPerf}$ and finishes the proof of the lemma.
\end{proof}

\begin{lma}
	The category $\mathrm{D}(\mathbf{sPerf})$ is equivalent to $\Dperf(X)$ as a tensor triangulated category.
	\label{lmastrictlrelax}
\end{lma}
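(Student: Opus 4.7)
The plan is to first recognise $\mathbf{sPerf}$ as essentially an instance of the Frobenius pair $\mathbf{Ch}^b(\mathcal{E})$ from the example in Section~2.3, applied to the exact category $\mathcal{E} = \mathrm{LocFree}(X)$ of locally free $\mathcal{O}_X$-modules of finite rank. Definition~\ref{dfnperf} describes $\mathrm{sPerf}$ as $\mathrm{Ch}^b(\mathrm{LocFree}(X))$ with degree-wise split conflations, and $\mathrm{asPerf}$ is the subcategory of honestly acyclic such complexes. The only subtle point here is that in the earlier example the subcategory was taken to consist of complexes \emph{homotopy equivalent} to an acyclic one, but this discrepancy disappears once one passes to the stable category: a complex homotopy equivalent to an acyclic complex is isomorphic in $\mathrm{K}^b(\mathrm{LocFree}(X))$ to that acyclic complex, so $\underline{\mathrm{asPerf}} = \underline{\mathrm{Ac}^b(\mathrm{LocFree}(X))}$ inside $\underline{\mathrm{sPerf}} = \mathrm{K}^b(\mathrm{LocFree}(X))$. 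Hence, as triangulated categories,
\[ \mathrm{D}(\mathbf{sPerf}) = \mathrm{K}^b(\mathrm{LocFree}(X))/\underline{\mathrm{asPerf}} = \Db(\mathrm{LocFree}(X))~.\]

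Next, I would identify $\Db(\mathrm{LocFree}(X))$ with $\Dperf(X)$. The obvious triangulated functor sends a bounded complex of locally free sheaves of finite rank, viewed as a strict perfect complex, to its image in $\Dperf(X)$. Fully faithfulness is standard, as morphisms in $\Dperf(X)$ between two complexes of finite-rank locally free sheaves can be computed from homotopy classes, no resolution being needed on the source. Essential surjectivity is where the non-singularity of $X$ enters: on a non-singular separated scheme of finite type over a field, every coherent sheaf admits a finite resolution by locally free sheaves of finite rank, and a standard splicing/truncation argument upgrades this to the statement that every perfect complex is quasi-isomorphic to a bounded complex of such sheaves.

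Finally, I would check that the tensor structures agree. The tensor structure on $\mathrm{D}(\mathbf{sPerf})$ is induced by the componentwise tensor product $\otimes_{\mathcal{O}_X}$ on $\mathrm{sPerf}$ via Lemma~\ref{derivedismonoidal}, while the tensor structure on $\Dperf(X)$ is the derived tensor product $\otimes^{\mathrm{L}}$. Since every object of $\mathrm{sPerf}$ is a bounded complex of flat $\mathcal{O}_X$-modules, the underived componentwise tensor product already computes $\otimes^{\mathrm{L}}$; the coherence isomorphisms are the usual ones for complexes, so the equivalence from the previous paragraph is compatible with the symmetric monoidal structures.

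I expect the main obstacle to be the essential surjectivity step, i.e.\ the classical but non-trivial fact that on a non-singular scheme every perfect complex is quasi-isomorphic to a strict perfect one. All other steps are formal unwinding of definitions or invoke results already stated in the paper (the example from Section~2.3 and Lemma~\ref{derivedismonoidal}).
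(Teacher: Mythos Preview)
Your proposal is correct and follows the same route as the paper: both use the inclusion of strict perfect complexes into perfect complexes to produce the equivalence, and both observe that the tensor structures agree because bounded complexes of locally free sheaves are flat, so the underived tensor product already computes~$\otimes^{\mathrm{L}}$. The only difference is packaging of the equivalence step: where you sketch full faithfulness and essential surjectivity by hand (invoking finite locally free resolutions on a non-singular scheme), the paper simply cites \cite{thomason-trobaugh}*{Proposition 2.3.1}, which gives the equivalence whenever $X$ has an ample family of line bundles, and then notes via \cite{sga6}*{Corollaire 2.2.7.1} that noetherian, separated, and regular already guarantee this hypothesis. Your version is more self-contained; the paper's is shorter and pinpoints the exact structural condition (ample family) doing the work, which incidentally is also what underlies your resolution argument.
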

\begin{proof}
	The inclusion functor from the exact category of strict perfect complexes into the exact category of perfect complexes induces an exact equivalence of derived categories between $\mathrm{D}(\mathbf{sPerf})$ and $\Dperf(X)$ if $X$ has an ample family of line bundles, as follows from \cite{thomason-trobaugh}*{Proposition 2.3.1}, as mentioned in the proof of \cite{thomason-trobaugh}*{Lemma 3.8}. As being noetherian, separated and regular already implies that $X$ admits an ample family of line bundles (see \cite{sga6}*{Corollaire 2.2.7.1}), our assumptions on $X$ guarantee that the inclusion is an equivalence. It is also a tensor functor as we can compute the derived tensor product by tensoring with a quasi-isomorphic strict perfect complex.
\end{proof}

\begin{conv}
	For the remaining part of the section, we set $\mathbf{T} := \mathbf{sPerf}$ and $\mathcal{T} := \mathrm{D}(\mathbf{sPerf}) \cong \Dperf(X)$. We fix the opposite of the codimension of support as a dimension function on $\mathcal{T}$.
\end{conv}

\begin{lma}
	The Frobenius pair $\mathbf{sPerf}$ satisfies the triangulated Gersten conjecture in bidegrees $(l,p)$ for $l \leq 0$ and $p \geq -1$.
	\label{gerstenexample}
\end{lma}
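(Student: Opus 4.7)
The plan is to verify the vanishing of the sheaf map $\mathscr{K}^{(l-1)}_p \to \mathscr{K}^{(l)}_p$ arising from the localization sequence (\ref{longexactgersten}) by checking it on stalks, and then to reduce to the classical Gersten conjecture for the regular local rings $\mathcal{O}_{X,P}$. The first order of business will be to identify the Waldhausen K-theory $\mathbb{K}_p((\mathbf{sPerf}_U)_{(l)})$ for $p \geq 0$ with Quillen's K-theory $\mathrm{K}_p(\mathrm{M}_{-l}(U))$ of the abelian category of coherent $\mathcal{O}_U$-modules whose codimension of support is at least $-l$. This goes through three classical inputs: Lemma \ref{lmastrictlrelax} (strict perfect complexes model perfect complexes on $U$); the Gillet-Waldhausen theorem (identifying Waldhausen K-theory of bounded complexes with Quillen K-theory of the underlying exact category); and Quillen's resolution theorem combined with dévissage (using non-singularity to resolve coherent sheaves by locally free sheaves and to pass from K-theory with support to the codimension-filtered pieces of G-theory). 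The idempotent completion in the recipe of Definition \ref{ksheafdef} affects only $\mathrm{K}_0$ by a small cofinal correction à la Thomason-Trobaugh, which is harmless here.

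Once this identification is in place, I would pass to the stalk at a point $P \in X$. Since Quillen K-theory commutes with filtered colimits of exact categories, one obtains $(\mathscr{K}^{(l)}_p)_P \cong \mathrm{K}_p(\mathrm{M}_{-l}(\Spec \mathcal{O}_{X,P}))$ for $p \geq 0$, compatibly with the map in the localization sequence. Because $X$ is non-singular, $\mathcal{O}_{X,P}$ is a regular local ring, so the Gersten conjecture of Quillen \cite{quillenhigher} (extended by Panin \cite{panin} to remove finite generation hypotheses) gives the vanishing of the canonical map
\[\mathrm{K}_p(\mathrm{M}_{-l+1}(\Spec \mathcal{O}_{X,P})) \to \mathrm{K}_p(\mathrm{M}_{-l}(\Spec \mathcal{O}_{X,P}))\]
for every $p \geq 0$. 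As the map in question vanishes on every stalk, it vanishes as a map of sheaves, which settles all bidegrees $(l,p)$ with $l \leq 0$ and $p \geq 0$.

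For the remaining case $p = -1$, I would argue that the source sheaf $\mathscr{K}^{(l-1)}_{-1}$ itself is zero. Its stalk at $P$ is the negative K-group $\mathbb{K}_{-1}$ of a Frobenius pair whose derived category consists of perfect complexes on $\Spec \mathcal{O}_{X,P}$ with homology supported in codimension $\geq -l+1 \geq 1$. Invoking Schlichting's vanishing result for negative K-theory of regular schemes (applied to $\Spec \mathcal{O}_{X,P}$ and its open complement, both regular, via the localization triangle computing K-theory with support), this stalk vanishes, so the map of $\mathscr{K}^{(l-1)}_{-1}$ into $\mathscr{K}^{(l)}_{-1}$ is trivially zero.

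The hard part will be the bookkeeping in the first paragraph: one must carefully string together the equivalences (strict-perfect-to-perfect via Lemma \ref{lmastrictlrelax}, locally free resolutions exploiting regularity, Gillet-Waldhausen, dévissage to $\mathrm{M}_{-l}(U)$, and the idempotent completion step built into Definition \ref{ksheafdef}) and verify that they induce a functorial isomorphism of K-theory sheaves on $X$ compatible with the localization sequences of both the Frobenius and Quillen frameworks. Once this reduction to the classical Gersten setting is accomplished, the remaining input is purely Quillen-Panin in non-negative degrees and the vanishing of negative K-theory of regular rings in degree $-1$.
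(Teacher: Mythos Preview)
Your proposal is correct and follows essentially the same route as the paper: identify the Frobenius-pair localization sequence with Quillen's localization sequence for the codimension filtration on $\mathrm{Coh}(U)$ (the paper does this by citing \cite{thomason-trobaugh}*{Theorem 1.11.7}, which packages the Gillet--Waldhausen/resolution/d\'evissage steps you spell out), then check vanishing on stalks via the classical Gersten conjecture for the regular local rings $\mathcal{O}_{X,P}$. The only minor divergence is in degree $p=-1$: the paper kills the source sheaf more directly by invoking Schlichting's result that $\mathbb{K}_{-1}(\mathbf{Ch}^b(\mathcal{A}))=0$ for any abelian category $\mathcal{A}$, rather than your localization-triangle argument for regular schemes, but both arguments are valid.
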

\begin{proof}
	First, let us introduce some maps of exact sequences of Frobenius pairs, which will allow us to ged rid of idempotent completions and work with complexes of coherent sheaves instead of perfect ones. 
	
	For $U \subset X$ open with complement $Z$, we start with
	\begin{equation}
	\begin{gathered}
	\xymatrix{
		(\mathbf{T}_U)_{(l-1)} \ar[r] &  (\mathbf{T}_U)_{(l)} \ar[r] & (\mathbf{T}_U)_{(l)/(l-1)} \\
		\left(\mathbf{sPerf}^{~U}_{(l-1)},\mathbf{sPerf}_{Z}\right) \ar[r] \ar[u] & \left(\mathbf{sPerf}^{~U}_{(l)},\mathbf{sPerf}_{Z}\right) \ar[r] \ar[u] & \left(\mathbf{sPerf}^{~U}_{(l)},\mathbf{sPerf}^{~U}_{(l-1)}\right) \ar[u]
	}
	\label{eqexseq1}
	\end{gathered}
	\end{equation}
	in the notation of Definition \ref{ksheafdef}, where the vertical arrows are given as the inclusion into the countable envelope. Since $X$ is regular, $\mathcal{T}/\mathcal{T}_Z$ is equivalent to $\Db(\mathrm{Coh}(U))$ (see e.g.\ \cite{kleinchow}*{Section 3.2}), which is already idempotent complete. Hence, the vertical arrows induce equivalences of the corresponding derived categories since $X$ is regular. Therefore they induce isomorphisms in $\mathbb{K}$-theory.
	
	For an abelian category $\mathcal{A}$, define the Frobenius pair 
	\[\mathbf{Ch}^b(\mathcal{A}):=(\mathrm{Ch}^b(\mathcal{A}), \mathrm{Ac}^b(\mathcal{A}))~,\] 
	where $\mathrm{Ch}^b(\mathcal{A})$ is the category of bounded chain complexes in $\mathcal{A}$ and $\mathrm{Ac}^b(\mathcal{A})$ is the full subcategory of complexes homotopy equivalent to an acyclic chain complex. The conflations in $\mathbf{Ch}^b(\mathcal{A})$ are by definition the degree-wise split exact sequences. There is a  map of exact sequences of Frobenius pairs
	\begin{equation}
	\begin{gathered}
	\xymatrix{
		\left(\mathbf{sPerf}^{~U}_{(l-1)},\mathbf{sPerf}_{Z}\right)  \ar[d] \ar[r]& \left(\mathbf{sPerf}^{~U}_{(l)},\mathbf{sPerf}_{Z}\right)  \ar[d] \ar[r]& \left(\mathbf{sPerf}^{~U}_{(l)},\mathbf{sPerf}^{~U}_{(l-1)}\right) \ar[d]\\
		\mathbf{Ch}^b\left((\mathrm{Coh}(U)_{(l-1)}\right) \ar[r]  & \mathbf{Ch}^b\left(\mathrm{Coh}(U)_{(l)}\right) \ar[r]  & \mathbf{Ch}^b\left(\mathrm{Coh}(U)_{(l)}/\mathrm{Coh}(U)_{(l-1)}\right)
	}
	\label{eqexseq2}
	\end{gathered}
	\end{equation}
	where the vertical maps are given by restriction to $U$. Again, we check that they induce equivalences of the corresponding derived categories and therefore induce isomorphisms in $\mathbb{K}$-theory. 
	
	Using the maps (\ref{eqexseq1}) and (\ref{eqexseq2}) and \cite{schlichtingnegative}*{Theorem 11.10}, we see that the localization sequences corresponding to
	\[(\mathbf{T}_U)_{(l-1)} \to  (\mathbf{T}_U)_{(l)} \to (\mathbf{T}_U)_{(l)/(l-1)}\]
	and
	\[\mathbf{Ch}^b\left((\mathrm{Coh}(U)_{(l-1)}\right) \to \mathbf{Ch}^b\left(\mathrm{Coh}(U)_{(l)}\right) \to \mathbf{Ch}^b\left(\mathrm{Coh}(U)_{(l)}/\mathrm{Coh}(U)_{(l-1)}\right)\]
	are isomorphic. This proves the lemma for $p=-1$ by \cite{schlichtingnegative}*{Theorem 9.1}, which shows that $\mathbb{K}_{-1}(\mathbf{Ch}^b(\mathcal{A})) = 0$ for any abelian category $\mathcal{A}$. For $p \geq 0$, \cite{thomason-trobaugh}*{Theorem 1.11.7} shows that both localization sequences are in turn isomorphic to the localization sequence
	\[\cdots \to \mathrm{K}_{p}\left(\mathrm{Coh}(U)_{(l)}\right) \to \mathrm{K}_{p}\left(\frac{\mathrm{Coh}(U)_{(l)}}{\mathrm{Coh}(U)_{(l-1)}}\right) \to \mathrm{K}_{p-1}\left(\mathrm{Coh}(U)_{(l-1)}\right) \to \cdots \]
	from Quillen $\mathrm{K}$-theory for all $l \in \mathbb{Z}$, where $\mathrm{Coh}(U)_{(l)}$ denotes the abelian category of coherent sheaves on the open subscheme $U \subset X$, with codimension of support $\geq -l$.
	
	Therefore the stalks of the exact sequence (\ref{longexactgersten}) are exact sequences isomorphic to the usual ones in the Gersten conjecture, which is satisfied for regular local rings of finite type over a field (see \cite{quillenhigher}*{Theorem 5.11}). This implies the statement as we can check the vanishing of a map of sheaves on the stalks.
\end{proof}

\begin{lma}
	There are isomorphisms
	\[\prescript{}{\cap}\Chow^{\Delta}_{-p}(\mathcal{T}) \cong \Chow^{p}(X)\]
	for all $p \in \mathbb{Z}$.
	\label{lmachoweq}
\end{lma}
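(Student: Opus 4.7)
The plan is to reduce, via Remark~\ref{remcapnormcomp}, to the classical identification from Theorem~\ref{agreementthm}. Concretely, I will verify the idempotent-completeness hypothesis of that remark by using that $X$ is non-singular and appealing to a standard description of the sub-quotients as derived categories of abelian quotients, after which the conclusion is immediate.

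First, since $X$ is noetherian, separated and non-singular, one has a tensor triangulated equivalence $\mathcal{T} = \Dperf(X) \simeq \Db(\mathrm{Coh}(X))$. The right-hand side is the bounded derived category of an abelian category and is therefore idempotent complete (compare \cite{balschlichidem}), so $\mathcal{T}^{\natural} = \mathcal{T}$, and in particular $\mathcal{T}_{(l)}^{\natural} = \mathcal{T}_{(l)}$ for all $l \in \mathbb{Z}$. Next, under the same equivalence, the subcategory $\mathcal{T}_{(l)}$ is identified with $\Db(\mathrm{Coh}(X)_{(-l)})$, where $\mathrm{Coh}(X)_{(-l)} \subset \mathrm{Coh}(X)$ is the Serre subcategory of coherent sheaves whose support has codimension $\geq -l$. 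A standard Verdier-quotient calculation for bounded derived categories of noetherian abelian categories (of the sort used in \cite{kleinchow}*{Section~3.2}) then yields
\[\mathcal{T}_{(l)}/\mathcal{T}_{(l-1)} \simeq \Db\bigl(\mathrm{Coh}(X)_{(-l)}/\mathrm{Coh}(X)_{(-l+1)}\bigr),\]
which, again as the bounded derived category of an abelian category, is itself idempotent complete.

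These two verifications put us in the setting of Remark~\ref{remcapnormcomp}, so the inclusions $\prescript{}{\cap}\Cyc^{\Delta}_{-p}(\mathcal{T}) \hookrightarrow \Cyc^{\Delta}_{-p}(\mathcal{T})$ and $\prescript{}{\cap}\Chow^{\Delta}_{-p}(\mathcal{T}) \hookrightarrow \Chow^{\Delta}_{-p}(\mathcal{T})$ are equalities for every $p$; Theorem~\ref{agreementthm} then finishes the argument by identifying the latter with $\Chow^{p}(X)$. The step I expect to require the most care is the identification of the triangulated Verdier quotient $\mathcal{T}_{(l)}/\mathcal{T}_{(l-1)}$ with the bounded derived category of the abelian Serre quotient \emph{before} any idempotent completion is taken: non-singularity of $X$ is essential here in order to pass freely between $\Dperf(X)$ and $\Db(\mathrm{Coh}(X))$, and without such a strictly abelian-categorical identification one would only have the Balmer decomposition of Theorem~\ref{thmfiltdecomp}, which describes the sub-quotient only up to idempotent completion and would not suffice to apply Remark~\ref{remcapnormcomp}.
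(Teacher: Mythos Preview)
Your proposal is correct and follows essentially the same route as the paper's own proof: both arguments use non-singularity to pass to $\Db(\mathrm{Coh}(X))$, identify the sub-quotient $\mathcal{T}_{(l)}^{\natural}/\mathcal{T}_{(l-1)}^{\natural}$ with the bounded derived category of the abelian Serre quotient $\mathrm{Coh}(X)_{(l)}/\mathrm{Coh}(X)_{(l-1)}$, conclude idempotent completeness, and then invoke Remark~\ref{remcapnormcomp} together with Theorem~\ref{agreementthm}. Your version is slightly more explicit about why $\mathcal{T}_{(l)}^{\natural}=\mathcal{T}_{(l)}$, but otherwise the two proofs coincide.
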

\begin{proof}
	Under our assumptions, Theorem \ref{agreementthm} shows that $\Chow^{\Delta}_{-p}(\mathcal{T}) \cong \Chow^{p}(X)$ for all $p \in \mathbb{Z}$. The isomorphisms $\prescript{}{\cap}\Chow^{\Delta}_{-p}(\mathcal{T}) \cong \Chow^{\Delta}_{-p}(\mathcal{T})$ are a consequence of the fact that the categories $\mathcal{T}_{(-p)}^{\natural}/\mathcal{T}_{(-p-1)}^{\natural}$ can be expressed as bounded derived categories of abelian categories since we assumed that $X$ is non-singular (namely $\mathcal{T}_{(-p)}^{\natural}/\mathcal{T}_{(-p-1)}^{\natural} \cong \Db(\mathrm{Coh}(X))_{(-p)}/\Db(\mathrm{Coh}(X))_{(-p-1)} \cong \Db(\mathrm{Coh}(X)_{(-p)}/\mathrm{Coh}(X)_{(-p-1)} )$), and are therefore idempotent complete already. Thus, there is an equivalence 
	\[\mathcal{T}_{(-p)}^{\natural}/\mathcal{T}_{(-p-1)}^{\natural} \to \left(\mathcal{T}_{(-p)}^{\natural}/\mathcal{T}_{(-p-1)}^{\natural}\right)^{\natural} \cong \left(\mathcal{T}_{(-p)}/\mathcal{T}_{(-p-1)}\right)^{\natural}\]
	induced by the inclusion functor, which gives the isomorphism by Remark \ref{remcapnormcomp}.
\end{proof}

We now want to compare the usual intersection product on $X$ and the product from Definition \ref{productdef} on the tensor triangular Chow groups of $\Dperf(X)$, coming from the tensor Frobenius pair $\mathbf{sPerf}$. In order to do this, consider the isomorphisms 
\[\mathbb{K}_i(\mathbf{T}_U) \to \mathrm{K}_i(\mathrm{Coh}(U))\] 
that were constructed in the proof of Lemma \ref{gerstenexample}. If we denote them by $s^U_i$, then for all $i,j \geq 0$ and $U \subset X$ open, they fit into a diagram
\begin{equation}
\begin{aligned}
\xymatrix{
	\mathbb{K}_i(\mathbf{T}_U) \otimes \mathbb{K}_j(\mathbf{T}_U) \ar[r] \ar[d]^{s^U_i \otimes s^U_j}&  \mathbb{K}_{i+j}(\mathbf{T}_U) \ar[d]^{s^U_{i+j}} \\
	\mathrm{K}_i(\mathrm{Coh}(U)) \otimes \mathrm{K}_j(\mathrm{Coh}(U)) \ar[r]&  \mathrm{K}_{i+j}(\mathrm{Coh}(U))
}
\end{aligned}
\label{conjkprodsame}
\end{equation}
where the horizontal arrows are given by the products in the Waldhausen $\mathrm{K}$-theory of $\mathbf{T}_U$ and in the Quillen $\mathrm{K}$-theory of $\mathrm{Coh}(U)$ (see \cite{waldgenprods}), respectively.

\begin{thm} 
	Let $\alpha$ denote the intersection product from Definition~\ref{productdef} coming from the tensor Frobenius pair $\mathbf{sPerf}$ and let $\alpha'$ be the usual intersection product on~$X$. Assume that diagram~(\ref{conjkprodsame}) commutes for all $i,j \geq 0$ and all opens $U \subset X$. Then the diagram
	\[
	\xymatrix{
		\prescript{}{\cap}\Chow^{\Delta}_{-p}(\mathcal{T}) \otimes \prescript{}{\cap}\Chow^{\Delta}_{-q}(\mathcal{T}) \ar[r]^(0.6){\alpha} \ar[d]^{\cong}& \prescript{}{\cap}\Chow^{\Delta}_{-p-q}(\mathcal{T}) \ar[d]^{\cong} \\
		\Chow^{p}(X) \otimes \Chow^{q}(X) \ar[r]^(0.6){\alpha'} & \Chow^{p+q}(X)
	}
	\]
	commutes up to a sign $(-1)^{pq}$ for all $p,q \geq 0$.
	\label{thmprodagrees}
\end{thm}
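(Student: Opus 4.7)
The plan is to reduce the statement to Grayson's comparison theorem from \cite{graysonproduct} by identifying the entire tt-theoretic construction with its classical Quillen K-theoretic counterpart. The identifications are carried out in three steps.

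First, I would promote the isomorphisms $s^U_i: \mathbb{K}_i(\mathbf{T}_U) \to \mathrm{K}_i(\mathrm{Coh}(U))$ from the proof of Lemma~\ref{gerstenexample} to an isomorphism of sheaves $\mathscr{K}^{(0)}_p \cong \mathcal{K}_p$ on $\Spc(\mathcal{T}) \cong X$, where $\mathcal{K}_p$ denotes the sheaf associated to $U \mapsto \mathrm{K}_p(\mathrm{Coh}(U))$ used by Quillen in his proof of the classical Bloch formula. By the same sequence of maps of Frobenius pairs, these isomorphisms extend to an isomorphism between the partial flasque resolution~(\ref{res}) of $\mathscr{K}^{(0)}_p$ and the coniveau resolution of $\mathcal{K}_p$, so that the Bloch formula of Theorem~\ref{bloch} is identified with the classical Bloch formula under the isomorphism $\prescript{}{\cap}\Chow^{\Delta}_{-p}(\mathcal{T}) \cong \Chow^p(X)$ of Lemma~\ref{lmachoweq}.

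Second, the assumption that diagram~(\ref{conjkprodsame}) commutes for all $i,j \geq 0$ and all opens $U \subset X$ promotes the $s^U_i$ to an identification of the pairing $\mathscr{K}^{(0)}_p \otimes \mathscr{K}^{(0)}_q \to \mathscr{K}^{(0)}_{p+q}$ arising from the Waldhausen K-theory of $\mathbf{sPerf}$ with the pairing $\mathcal{K}_p \otimes \mathcal{K}_q \to \mathcal{K}_{p+q}$ on Quillen K-theory sheaves. By the uniqueness and naturality of the cup product in sheaf cohomology (\cite{bredonsheaf}*{Theorem 7.1}), the composition of the cup product from Corollary~\ref{intersectionstep1} with the map~(\ref{intersectionstep2}) is then identified, via the identifications of the first step, with the Quillen K-theoretic intersection product on $\bigoplus_p \mathrm{H}^p(X,\mathcal{K}_p)$ considered by Grayson.

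Finally, Grayson's theorem from \cite{graysonproduct} asserts that this Quillen K-theoretic product agrees, up to the sign $(-1)^{pq}$, with the usual intersection product $\alpha'$ on $\Chow^*(X)$ under the classical Bloch formula. Combining this with the previous step yields the claimed commutativity of the diagram up to $(-1)^{pq}$. The main obstacle is the first step: one must verify carefully that the partial flasque resolution~(\ref{res}) and the coniveau resolution of $\mathcal{K}_p$ are not merely isomorphic sheaf by sheaf (which follows directly from Lemma~\ref{gerstenexample}) but also have matching differentials. This requires tracking the connecting maps of the Schlichting and Thomason--Trobaugh localization sequences through the maps of Frobenius pairs in~(\ref{eqexseq1}) and~(\ref{eqexseq2}), and checking that the splicings in~(\ref{res}) and in Quillen's coniveau construction agree under these identifications.
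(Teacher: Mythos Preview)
Your proposal is correct and follows essentially the same route as the paper's own proof: identify $\mathscr{K}^{(0)}_p$ with the Quillen $\mathrm{K}$-theory sheaf $\mathcal{F}_p$ via the isomorphisms $s^U_p$, use the assumed commutativity of diagram~(\ref{conjkprodsame}) to match the two sheaf pairings, and then invoke Grayson's theorem. The paper's argument is terser and does not spell out the compatibility of the flasque resolutions that you flag as the ``main obstacle''; your more careful attention to matching the differentials through the maps~(\ref{eqexseq1}) and~(\ref{eqexseq2}) is a welcome elaboration rather than a deviation.
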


\begin{rem}
	The construction of the products in Quillen and Waldhausen $\mathrm{K}$-theory is so natural that it seems very plausible that diagram~(\ref{conjkprodsame}) always commutes for all $i,j \geq 0$ and all opens $U \subset X$. A hypothetical proof of this statement involves comparing the products in Waldhausen and Quillen $\mathrm{K}$-theory via the homotopy equivalence between the Waldhausen and Quillen $\mathrm{K}$-theory spectra of an exact category constructed in \cite{waldktheory}*{Section 1.9}. The author plans to further investigate this in future work.
\end{rem}

\begin{proof}[Proof of Theorem \ref{thmprodagrees}]
	As we have $\Spc(\mathcal{T}) \cong X$ and $X$ is regular, the sheaves $\mathscr{K}^{(0)}_p$ will be isomorphic to the sheaves $\mathcal{F}_p$ associated to the presheaf $U \mapsto \mathrm{K}_p(\mathrm{Coh}(U))$ on $X$ via the isomorphisms $s_p$. By the Bloch formula, $\mathrm{H}^{p}(X,\mathcal{F}_p) \cong \Chow^{p}(X)$. The statement now follows by the commutativity of diagram (\ref{conjkprodsame}) and the main result of \cite{graysonproduct}, where it is shown that the product
	\[\mathrm{H}^{p}(X,\mathcal{F}_p) \otimes \mathrm{H}^{q}(X,\mathcal{F}_q) \to H^{p+q}(X,\mathcal{F}_p \otimes \mathcal{F}_q) \to \mathrm{H}^{p+q}(X,\mathcal{F}_{p+q})\]
	agrees with the usual intersection product up to a sign $(-1)^{pq}$, where the second map comes from the product on Quillen $\mathrm{K}$-theory induced by the tensor product.
\end{proof}

\newpage
\begin{appendices}
\section{The countable envelope of a tensor Frobenius pair}
\label{chaptercountable}
In this section, we show that the countable envelope of a tensor Frobenius pair (see Definition \ref{dfntensorfrobpair}) naturally inherits the structure of a tensor Frobenius pair. This is an extension of work of Keller \cite{kellerchain}*{Appendix B} and Schlichting \cite{schlichtingnegative}*{Section 4} to a symmetric monoidal setting. It enables us to embed a tensor Frobenius pair into one that has countable coproducts, which in turn makes it possible to lift the embedding of its derived category into its idempotent completion to the level of Frobenius models (see Section \ref{sectmodic}).

\subsection{Ind-objects in an additive category}
In this section we recall some of the theory of ind-objects in the additive setting. We heavily rely on the exposition in \cite{kaschap}.

Let $\mathcal{E}$ be a small additive category and denote by $\hat{\mathcal{E}}^{\mathrm{add}} := \mathrm{Funct}_{\mathrm{add}}(\mathcal{E}^{\mathrm{op}},\mathbf{Ab})$ the abelian category of additive functors from $\mathcal{E}$ to the category of Abelian groups. By composition with the forgetful functor, it can be considered as a full subcategory of the category of all functors $\hat{\mathcal{E}}:=\mathrm{Funct}(\mathcal{E}^{\mathrm{op}},\mathbf{Set})$ from $\mathcal{E}$ to the category of sets (see \cite{kaschap}*{Proposition 8.2.12}).

The Yoneda functor gives an a priori embedding $\mathcal{E} \to \hat{\mathcal{E}}$, but as Hom-sets are abelian groups and Hom-functors are additive in our setting, it factors through an embedding $h_{\mathcal{E}}: \mathcal{E} \to \hat{\mathcal{E}}^{\mathrm{add}}$. Given a small filtered category $I$ and a functor $\alpha: I \to \mathcal{E}$ in $\mathcal{E}$, its colimit in $\mathcal{E}$ might not exist. We denote by ``${\varinjlim}$'' $\alpha$ the colimit of the inductive system $h_{\mathcal{E}} \circ F$ in $\hat{\mathcal{E}}$, which is also in $\hat{\mathcal{E}}^{\mathrm{add}}$.

\begin{dfn}[cf.\ \cite{kaschap}*{Definition 6.1.1}] 
	An \emph{ind-object} in $\mathcal{E}$ is by definition an object of $\hat{\mathcal{E}}$ that is isomorphic in $\hat{\mathcal{E}}$ to ``${\varinjlim}$'' $\alpha$ for some small filtered category $I$ and a functor~$\alpha: I \to \mathcal{E}$. We denote by $\mathrm{Ind}(\mathcal{E})$ the full subcategory of $\hat{\mathcal{E}}$ consisting of the ind-objects in $\mathcal{E}$. The functor $h_{\mathcal{E}}$ induces a full embedding $\iota_\mathcal{E}: \mathcal{E} \to \mathrm{Ind}(\mathcal{E})$.
	\label{dfnindobdef}
\end{dfn}

\begin{rem}
	In the literature, the category of ind-objects in $\mathcal{E}$ is often defined as the full subcategory of $\hat{\mathcal{E}}$ consisting of filtered colimits of representable functors (see e.g.\ \cite{SGA4-1}). The resulting category $\mathrm{Ind}'(\mathcal{E})$ is equivalent to $\mathrm{Ind}(\mathcal{E})$ from Definition \ref{dfnindobdef} and it is also possible to construct an explicit quasi-inverse to the inclusion $\mathrm{Ind}'(\mathcal{E}) \hookrightarrow \mathrm{Ind}(\mathcal{E})$ as follows: for any object $A \in \mathrm{Ind}(\mathcal{E})$, denote by $\mathcal{E}_A$ the category with objects arrows $s_U: U \to A$ in $\mathrm{Ind}(\mathcal{E})$ with $U \in \mathcal{E}$ (we identify $\mathcal{E}$ with a subcategory of $\mathrm{Ind}(\mathcal{E})$ via $\iota_{\mathcal{E}}$). A morphism $f: s_U \to s_V$ in $\mathcal{E}_A$ is a morphism in $\mathcal{E}$ that makes the diagram in~$\mathrm{Ind}(\mathcal{E})$
	\[
	\xymatrix{
		U \ar[r]^{s_U} \ar[d]_{f} & A \\
		V \ar[ur]_{s_V}}
	\] 
	commute. The category $\mathcal{E}_A$ is cofinally small and filtered by \cite{kaschap}*{Proposition 6.1.5} and thus we can define a functor
	\begin{equation}
	\begin{split}
	\mathrm{Ind}(\mathcal{E}) &\to \mathrm{Ind}'(\mathcal{E}) \subset \mathrm{Ind}(\mathcal{E})\\
	A &\mapsto \underset{(U \to A) \in \mathcal{E}_A}{\text{``$\varinjlim$''}} U
	\end{split}
	\label{eqquinv}
	\end{equation}
	which has image in $\mathrm{Ind}'(\mathcal{E})$. By \cite{kaschap}*{Proposition 2.6.3 (i)}, the natural map 
	\[\underset{(U \to A) \in \mathcal{E}_A}{\text{``$\varinjlim$''}} U \to A\]
	is an isomorphism. If $A = \text{``$\varinjlim$''} \alpha$ for some functor $\alpha: I \to \mathcal{E}$, then there is an associated functor $I \to \mathcal{E}_A$ which maps $i \in I$ to the canonical morphism $\alpha(i) \to A$. This functor is cofinal by \cite{kaschap}*{Proposition 2.6.3 (ii)} and we see that the functor (\ref{eqquinv}) is indeed the desired quasi-inverse.
\end{rem}

Under our assumptions, $\mathrm{Ind}(\mathcal{E})$ carries the expected additional structure.
\begin{lma}
	The category $\mathrm{Ind}(\mathcal{E})$ is additive.
\end{lma}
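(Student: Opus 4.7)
The plan is to verify the three axioms of an additive category — Ab-enrichment with bilinear composition, existence of a zero object, and existence of finite biproducts — by exploiting the inclusion $\mathrm{Ind}(\mathcal{E}) \hookrightarrow \hat{\mathcal{E}}^{\mathrm{add}}$ already noted before Definition \ref{dfnindobdef}.

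First I would handle the Ab-enrichment. Since $\hat{\mathcal{E}}^{\mathrm{add}} = \mathrm{Funct}_{\mathrm{add}}(\mathcal{E}^{\mathrm{op}}, \mathbf{Ab})$ is abelian, its Hom-sets are abelian groups (add natural transformations componentwise in $\mathbf{Ab}$) and composition is bilinear. As $\mathrm{Ind}(\mathcal{E})$ is a full subcategory, it inherits this structure automatically. The zero object is handled by taking the image $\iota_{\mathcal{E}}(0)$ of the zero object of $\mathcal{E}$ (which exists since $\mathcal{E}$ is additive); this is terminal and initial in $\hat{\mathcal{E}}^{\mathrm{add}}$, hence in $\mathrm{Ind}(\mathcal{E})$.

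The substantive step is closure under finite biproducts. Given $A = \text{``}\varinjlim\text{''}\,\alpha$ and $B = \text{``}\varinjlim\text{''}\,\beta$ with $\alpha\colon I \to \mathcal{E}$, $\beta\colon J \to \mathcal{E}$ and $I,J$ small filtered, I would form the product category $I \times J$, observe that it is again small and filtered (coproducts of objects obtained componentwise, and parallel arrows equalized componentwise), and consider the functor $\alpha \oplus \beta\colon I \times J \to \mathcal{E}$, $(i,j) \mapsto \alpha(i) \oplus \beta(j)$. The candidate direct sum is then $\text{``}\varinjlim\text{''}(\alpha \oplus \beta) \in \mathrm{Ind}(\mathcal{E})$. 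To identify it with the biproduct of $A$ and $B$ in $\hat{\mathcal{E}}^{\mathrm{add}}$, I would compute pointwise (colimits and direct sums in $\hat{\mathcal{E}}^{\mathrm{add}}$ are computed objectwise in $\mathbf{Ab}$) and invoke the commutation of filtered colimits with finite direct sums in $\mathbf{Ab}$:
\[
\bigl(\text{``}\varinjlim\text{''}(\alpha \oplus \beta)\bigr)(U) = \varinjlim_{(i,j)} \bigl(\mathcal{E}(U,\alpha(i)) \oplus \mathcal{E}(U,\beta(j))\bigr) \cong \varinjlim_i \mathcal{E}(U,\alpha(i)) \oplus \varinjlim_j \mathcal{E}(U,\beta(j)) = A(U) \oplus B(U),
\]
naturally in $U \in \mathcal{E}$. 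This shows that $A \oplus B$ (the direct sum in $\hat{\mathcal{E}}^{\mathrm{add}}$) is itself an ind-object, and the universal inclusions/projections of the pointwise biproduct make it a biproduct in $\mathrm{Ind}(\mathcal{E})$.

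The main obstacle, such as it is, is precisely this closure under biproducts: one has to pick the right indexing category ($I \times J$, not some diagonal construction) and cite the right commutation lemma. Everything else is inherited from $\hat{\mathcal{E}}^{\mathrm{add}}$.
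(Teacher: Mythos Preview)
Your proof is correct and follows essentially the same strategy as the paper: inherit the pre-additive structure from the ambient functor category, then verify closure under finite (co)products. The only difference is that where you explicitly construct the biproduct via the index category $I \times J$ and a pointwise computation, the paper simply cites \cite{kaschap}*{Proposition 6.1.18} (which gives small coproducts in $\mathrm{Ind}(\mathcal{E})$ whenever $\mathcal{E}$ has finite coproducts) and then invokes the standard fact that finite coproducts are biproducts in a pre-additive category.
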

\begin{proof}
	It is immediate from the definition of $\mathrm{Ind}(\mathcal{E})$ as a full subcategory of $\hat{\mathcal{E}}$ that the category $\mathrm{Ind}(\mathcal{E})$ is pre-additive, i.e.\ the morphism sets are abelian groups and composition is bilinear. As $\mathcal{E}$ is additive it has finite coproducts and by \cite{kaschap}*{Proposition 6.1.18}, it follows that $\mathrm{Ind}(\mathcal{E})$ admits small (and in particular finite) coproducts. As finite coproducts and products coincide in a pre-additive category (see \cite{kaschap}*{Corollary 8.2.4}), it follows by \cite{kaschap}*{Lemma 8.2.9} that $\mathrm{Ind}(\mathcal{E})$ is additive.
\end{proof}

We finish the section with two statements about the indization of symmetric monoidal categories.
\begin{prop}
	Let $\mathcal{E}$ be endowed with a symmetric monoidal structure such that the functor $a \otimes -$ is additive for all objects $a \in \mathcal{E}$. Then $\mathrm{Ind}(\mathcal{E})$ naturally inherits a symmetric monoidal structure such that the inclusion $h_{\mathcal{E}}:\mathcal{E} \to \mathrm{Ind}(\mathcal{E})$ preserves the tensor product.
	\label{propindmonoidal}
\end{prop}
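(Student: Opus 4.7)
The plan is to extend the tensor product on $\mathcal{E}$ to $\mathrm{Ind}(\mathcal{E})$ by the formula
\[A \otimes B := \text{``${\varinjlim}$''}_{(i,j) \in I \times J} \, A_i \otimes B_j\]
whenever $A = \text{``${\varinjlim}$''}_{i \in I} A_i$ and $B = \text{``${\varinjlim}$''}_{j \in J} B_j$ in $\mathrm{Ind}(\mathcal{E})$. To avoid worries about well-definedness, I would first \emph{define} the tensor product using the canonical presentations $A = \text{``${\varinjlim}$''}_{(U \to A) \in \mathcal{E}_A} U$ and $B = \text{``${\varinjlim}$''}_{(V \to B) \in \mathcal{E}_B} V$ from the remark following Definition \ref{dfnindobdef}, setting
\[A \otimes B := \text{``${\varinjlim}$''}_{\substack{(U \to A) \in \mathcal{E}_A \\ (V \to B) \in \mathcal{E}_B}} U \otimes V~.\]
The product category $\mathcal{E}_A \times \mathcal{E}_B$ is filtered because each factor is, so this is genuinely an ind-object.

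Next, I would verify bifunctoriality. A morphism $f: A \to A'$ in $\mathrm{Ind}(\mathcal{E})$ induces a functor $\mathcal{E}_A \to \mathcal{E}_{A'}$ by postcomposition with $f$, and similarly for $g: B \to B'$. Together these induce the required morphism $A \otimes B \to A' \otimes B'$ on the ind-colimits. Once bifunctoriality is in hand, I would show that the natural map
\[\text{``${\varinjlim}$''}_{(i,j)} A_i \otimes B_j \longrightarrow A \otimes B\]
is an isomorphism for any presentations $A = \text{``${\varinjlim}$''}_i A_i$, $B = \text{``${\varinjlim}$''}_j B_j$, by using the cofinality of the associated functors $I \to \mathcal{E}_A$ and $J \to \mathcal{E}_B$ established in \cite{kaschap}*{Proposition 2.6.3 (ii)} together with \cite{kaschap}*{Proposition 2.6.3 (i)}. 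This effectively recovers the more symmetric ``colimit-over-any-presentation'' formula from the canonical one.

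The associator, unitor and symmetry isomorphisms on $\mathrm{Ind}(\mathcal{E})$ would then be defined by applying $\text{``${\varinjlim}$''}$ to the corresponding natural isomorphisms in $\mathcal{E}$ indexed by the filtered categories $\mathcal{E}_A, \mathcal{E}_B, \mathcal{E}_C$. The unit object is $h_{\mathcal{E}}(\mathbb{I})$, and the unitor $h_{\mathcal{E}}(\mathbb{I}) \otimes A \cong A$ follows because $\mathcal{E}_{h_{\mathcal{E}}(\mathbb{I})}$ has $\mathrm{id}_{\mathbb{I}}$ as a terminal object, reducing the ind-colimit to $\text{``${\varinjlim}$''}_{(V \to A)} \mathbb{I} \otimes V \cong \text{``${\varinjlim}$''}_{(V \to A)} V \cong A$. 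The coherence pentagons and hexagons are inherited from those in $\mathcal{E}$: each diagram in $\mathrm{Ind}(\mathcal{E})$ is the ind-colimit of the corresponding diagram in $\mathcal{E}$, hence commutes. The compatibility $h_{\mathcal{E}}(a) \otimes h_{\mathcal{E}}(b) \cong h_{\mathcal{E}}(a \otimes b)$ is immediate from the canonical presentation of $h_{\mathcal{E}}(a)$, whose index category $\mathcal{E}_{h_{\mathcal{E}}(a)}$ has $\mathrm{id}_a$ as a terminal object.

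The main obstacle is the independence of presentations and the verification that these isomorphisms assemble coherently; both amount to careful bookkeeping with the cofinality arguments of \cite{kaschap}*{Section 2.6} applied to product index categories. The additivity hypothesis on $a \otimes -$ is what ensures that the ind-extension stays inside $\hat{\mathcal{E}}^{\mathrm{add}}$ rather than merely $\hat{\mathcal{E}}$, and it will be used precisely to show that $A \otimes -$ commutes with the filtered colimits defining ind-objects, so that the resulting bifunctor takes values in $\mathrm{Ind}(\mathcal{E})$.
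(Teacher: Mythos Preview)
Your proposal is correct and follows essentially the same approach as the paper: the paper defines the tensor product by exactly your formula $A \otimes_{\mathrm{I}} B := \text{``}\varinjlim\text{''}_{\mathcal{E}_A \times \mathcal{E}_B} U \otimes V$, obtained by pulling back the well-known product on $\mathrm{Ind}'(\mathcal{E})$ along the equivalence~(\ref{eqquinv}), and takes the unit and coherence isomorphisms from those of~$\mathcal{E}$. The paper is much terser, citing \cite{delignecattak} and \cite{haiemb} for the well-knownness of the construction; your outline simply spells out the bookkeeping (one small imprecision: the additivity hypothesis is not actually needed here to land in $\mathrm{Ind}(\mathcal{E})$, since any filtered ``$\varinjlim$'' of representables is an ind-object by definition---additivity is used only in the subsequent lemma).
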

\begin{proof}
	The statement seems to be well-known for $\mathrm{Ind}'(\mathcal{E})$, at least in the context of abelian monoidal categories (see e.g.\ \cite{delignecattak}*{Section 7} or \cite{haiemb}*{Section 3.4}), where one sets 
	\[\underset{I}{\text{``${\varinjlim}$''}} \alpha \otimes_{\mathrm{I}} \underset{J}{\text{``${\varinjlim}$''}} \beta := \underset{I \times J}{\text{``${\varinjlim}$''}} \alpha \otimes \beta \]
	with $\alpha: I \to \mathcal{E}$ and $\beta: J \to \mathcal{E}$ functors from small filtered categories $I,J$ to $\mathcal{E}$ and $\otimes$ the tensor product on $\mathcal{E}$. Thus, we can define a symmetric monoidal structure on $\mathrm{Ind}(\mathcal{E})$ by pulling back along the equivalence (\ref{eqquinv}). Explicitly, we set for two objects $A,B \in \mathrm{Ind}(\mathcal{E})$ 
	\[A \otimes_{\mathrm{I}} B :=  \underset{((U \to A),(V \to B)) \in \mathcal{E}_A \times \mathcal{E}_B}{\text{``${\varinjlim}$''}} U \otimes V ~.\]
	The unit object of $\mathrm{Ind}(\mathcal{E})$ is given as the image of the unit object of $\mathcal{E}$ under $h_{\mathcal{E}}$ and the associativity, commutativity and unit isomorphisms are all induced by the ones of~$\mathcal{E}$.
\end{proof}

\begin{rem}
	The product $\otimes_I$ is naturally isomorphic to the restriction to $\mathrm{Ind}(\mathcal{E})$  of the \emph{Day convolution product} on $\hat{\mathcal{E}}$ (see \cite{dayclosedcats}). This product commutes with colimits in both arguments and the Yoneda embedding takes the tensor product on $\mathcal{E}$ to the convolution product on $\hat{\mathcal{E}}$. Therefore, it must be isomorphic to $\otimes_I$.
\end{rem}

\begin{lma}
	In the situation of Proposition \ref{propindmonoidal}, the functor $A \otimes_{\mathrm{I}} -$ is additive for all objects $A \in \mathrm{Ind}(\mathcal{E})$.
\end{lma}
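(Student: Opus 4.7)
The plan is to reduce the additivity of $A \otimes_{\mathrm{I}} -$ to the additivity of $u \otimes -$ on $\mathcal{E}$ (which holds by hypothesis) via the fact that $\otimes_{\mathrm{I}}$ is built out of filtered colimits. Since $\mathrm{Ind}(\mathcal{E})$ is additive, it suffices to verify that $A \otimes_{\mathrm{I}} -$ preserves finite direct sums and the zero object; additivity on morphisms then follows by the standard equivalence between preservation of finite biproducts and additivity.

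First I would treat the case $A = \iota_{\mathcal{E}}(u)$ for $u \in \mathcal{E}$. By Proposition \ref{propindmonoidal}, $\iota_{\mathcal{E}}(u) \otimes_{\mathrm{I}} \iota_{\mathcal{E}}(v) \cong \iota_{\mathcal{E}}(u \otimes v)$, and by construction of $\otimes_{\mathrm{I}}$, for arbitrary $B \in \mathrm{Ind}(\mathcal{E})$ one has $u \otimes_{\mathrm{I}} B \cong \text{``$\varinjlim$''}_{(V \to B) \in \mathcal{E}_B}(u \otimes V)$. Because $u \otimes -$ is additive on $\mathcal{E}$, and because filtered colimits in $\mathrm{Ind}(\mathcal{E})$ commute with finite direct sums (this is the additive analogue of the standard fact that filtered colimits commute with finite limits in $\mathbf{Ab}$, and it holds for $\mathrm{Ind}(\mathcal{E})$ since this category is closed in $\hat{\mathcal{E}}^{\mathrm{add}}$ under filtered colimits and finite direct sums), one deduces $u \otimes_{\mathrm{I}} (B \oplus B') \cong (u \otimes_{\mathrm{I}} B) \oplus (u \otimes_{\mathrm{I}} B')$, and similarly $u \otimes_{\mathrm{I}} 0 \cong 0$.

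For a general $A \in \mathrm{Ind}(\mathcal{E})$, write $A \cong \text{``$\varinjlim$''}_{(U \to A) \in \mathcal{E}_A} U$. The defining formula for $\otimes_{\mathrm{I}}$ gives $A \otimes_{\mathrm{I}} C \cong \text{``$\varinjlim$''}_{(U \to A)}(U \otimes_{\mathrm{I}} C)$ functorially in $C$. Applying this with $C = B \oplus B'$, using the representable case to rewrite each $U \otimes_{\mathrm{I}}(B \oplus B')$ as $(U \otimes_{\mathrm{I}} B) \oplus (U \otimes_{\mathrm{I}} B')$, and then invoking once more the commutation of filtered colimits with finite direct sums, yields the desired isomorphism $A \otimes_{\mathrm{I}}(B \oplus B') \cong (A \otimes_{\mathrm{I}} B) \oplus (A \otimes_{\mathrm{I}} B')$. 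The case of the zero object is handled identically.

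The main potential obstacle is making rigorous the commutation of filtered colimits with finite direct sums inside $\mathrm{Ind}(\mathcal{E})$, since colimits there are computed in $\hat{\mathcal{E}}$ rather than in $\mathcal{E}$. A clean way to circumvent this, which I would keep in reserve, is to appeal to the remark following Proposition \ref{propindmonoidal}: $\otimes_{\mathrm{I}}$ is the restriction of the Day convolution on $\hat{\mathcal{E}}$, and Day convolution preserves colimits in each variable by construction. Since finite direct sums are (in particular) colimits, this immediately yields preservation of finite biproducts and hence additivity.
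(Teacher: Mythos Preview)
Your proposal is correct and rests on the same core ingredients as the paper's proof: the criterion that additivity follows from preservation of finite (co)products, the additivity of $\otimes$ on $\mathcal{E}$, and the commutation of ``$\varinjlim$'' with finite products. The paper, however, does not split into a representable case followed by a general case; it works directly with presentations $A \cong \text{``$\varinjlim$''}_I \alpha$, $B \cong \text{``$\varinjlim$''}_J \beta$, $C \cong \text{``$\varinjlim$''}_K \gamma$ and computes $A \otimes_{\mathrm{I}} (B \times C)$ in one pass, invoking the cofinality of the diagonal $I \to I \times I$ (Kashiwara--Schapira, Corollary 3.2.3) to separate the $I$-variable. This is slightly more streamlined than your two-stage argument, which in its first step implicitly needs a cofinality statement relating $\mathcal{E}_{B \oplus B'}$ to $\mathcal{E}_B \times \mathcal{E}_{B'}$ that you do not spell out. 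Your reserve argument via Day convolution (tensor preserves colimits in each variable, hence finite coproducts) is a genuinely cleaner alternative that the paper does not use, and would bypass all the explicit colimit manipulation.
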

\begin{proof}
	By \cite{kaschap}*{Proposition 8.2.15}, in order to prove additivity, it suffices to show that $A \otimes_{\mathrm{I}} -$ preserves binary products. Assume we are given functors $\alpha: I \to \mathcal{E}, \beta: J \to \mathcal{E}, \gamma: K \to \mathcal{E}$ from small filtered categories $I,J,K$ to $\mathcal{E}$ such that $\underset{I}{\text{``${\varinjlim}$''}} \alpha \cong A, \underset{J}{\text{``${\varinjlim}$''}} \beta \cong B,  \underset{K}{\text{``${\varinjlim}$''}} \gamma \cong C$. Then
	\begin{align*}
	A \otimes_{\mathrm{I}} (B \times C) & \cong \left(\underset{I}{\text{``${\varinjlim}$''}} \alpha \right)  \otimes_{\mathrm{I}} \left(\left(\underset{J}{\text{``${\varinjlim}$''}} \beta \right) \times  \left( \underset{K}{\text{``${\varinjlim}$''}} \gamma \right)\right) \\
	&\cong \left(\underset{I}{\text{``${\varinjlim}$''}} \alpha \right)  \otimes_{\mathrm{I}} \left(\underset{J \times K}{\text{``${\varinjlim}$''}} \beta \times \gamma \right)\\
	&\cong \underset{I \times J \times K}{\text{``${\varinjlim}$''}}\alpha \otimes(\beta \times \gamma)\\
	&\cong \underset{I \times J \times K}{\text{``${\varinjlim}$''}}\alpha \otimes \beta \times \alpha \otimes \gamma ~,\\
	\end{align*}
	where we used that $\text{``${\varinjlim}$''}$ commutes with finite products and that $\otimes$ is additive in each variable. As the diagonal functor $I \to I \times I$ is cofinal (see \cite{kaschap}*{Corollary 3.2.3}), we obtain 
	\begin{align*}
	\underset{I \times J \times K}{\text{``${\varinjlim}$''}}\alpha \otimes \beta \times \alpha \otimes \gamma &\cong \underset{I \times J \times I \times K}{\text{``${\varinjlim}$''}}\alpha \otimes \beta \times \alpha \otimes \gamma \\
	&\cong \left(\underset{I \times J}{\text{``${\varinjlim}$''}}\alpha \otimes \beta\right)  \times  \left(\underset{I \times K}{\text{``${\varinjlim}$''}}\alpha \otimes \gamma\right) \\
	&\cong A \otimes_{\mathrm{I}} B \times A \otimes_{\mathrm{I}} C
	\end{align*}
	as desired.
\end{proof}

\subsection{The countable envelope of an exact category}
From now on, we endow $\mathcal{E}$ with the structure of an exact category (in the sense of Quillen). We are interested in the countable evelope $\mathrm{C}\mathcal{E}$, which is defined as a full subcategory of $\mathrm{Ind}(\mathcal{E})$. Let $I_0$ denote the category
\[\bullet \to \bullet \to \bullet \to \bullet \to \cdots\]
where we omit identities and compositions of morphisms.
\begin{dfn}[cf.\ \cite{kellerchain}*{Appendix B}] 
	The \emph{countable envelope $\mathrm{C}\mathcal{E}$ of $\mathcal{E}$} is defined as the full subcategory of $\mathrm{Ind}(\mathcal{E})$ consisting of all those objects isomorphic to one of the form $\text{``${\varinjlim}$''} \alpha$, where $\alpha: I_0 \to \mathcal{E}$ is a functor that maps all arrows of $I_0$ to inflations in $\mathcal{E}$.
\end{dfn}

\begin{rem}
	The embedding $\mathcal{E} \to \mathrm{Ind}(\mathcal{E})$ factors via $\mathrm{C}\mathcal{E}$ by choosing for an object $E \in \mathcal{E}$ the functor $\alpha_E$ that maps $I_0$ to the constant diagram
	\[E \xrightarrow{\mathrm{id}} E \xrightarrow{\mathrm{id}} E \xrightarrow{\mathrm{id}} E \to \cdots\]
	in $\mathcal{E}$.
	\label{remyonedfactorscountable}
\end{rem}

Keller shows in \cite{kellerchain}*{Appendix B} that $\mathrm{C}\mathcal{E}$ can be endowed with an exact structure as follows: 
\begin{thm}[\cite{kellerchain}*{Appendix B}]
	The following defines an exact structure on $\mathrm{C}\mathcal{E}$: a sequence of maps $A \to B \to C$ is a conflation if and only if it is isomorphic to a sequence 
	\[\text{``${\varinjlim}$''} \alpha \xrightarrow{\text{``${\varinjlim}$''} f}  \text{``${\varinjlim}$''} \beta \xrightarrow{\text{``${\varinjlim}$''} g} \text{``${\varinjlim}$''} \gamma \]
	where $\alpha,\beta,\gamma: I_0 \to \mathcal{E}$ are functors that send all maps of $I_0$ to inflations, and $f: \alpha \to \beta, g:\beta \to \gamma$ are morphisms of functors such that $\alpha(i) \xrightarrow{f(i)} \beta(i) \xrightarrow{g(i)} \gamma(i)$ is a conflation in $\mathcal{E}$ for all $i \in I_0$.
	\label{thmcountableexact}
\end{thm}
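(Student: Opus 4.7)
The plan is to verify the Quillen--Keller axioms for an exact category one by one, lifting each verification from $\mathcal{E}$ to $\mathrm{C}\mathcal{E}$ along $I_0$-indexed systems of inflations. The central technical tool is a reindexing lemma: for any morphism $f\colon A \to B$ between objects $A \cong \text{``${\varinjlim}$''} \alpha$ and $B \cong \text{``${\varinjlim}$''} \beta$ of $\mathrm{C}\mathcal{E}$, after composing with a suitable cofinal subfunctor $I_0 \to I_0$, one can represent $f$ by a morphism of $I_0$-diagrams $\tilde{f}\colon \alpha \to \beta$. This exploits the fact that every object of $\mathcal{E}$ is compact in $\mathrm{Ind}(\mathcal{E})$, so that the composite $\alpha(i) \to A \xrightarrow{f} B$ factors through some $\beta(j(i))$; an inductive adjustment of $j$ secures compatibility with the transition maps, and shifts of $I_0$ are cofinal in $I_0$.

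With this in hand, I would check the axioms as follows. Identity sequences and split short exact sequences are realized by constant $I_0$-systems, and the class of conflations is closed under isomorphism by definition. For the composition of two inflations, one represents both as levelwise morphisms of $I_0$-systems after a common reindexing given by the lemma and invokes closure of inflations in $\mathcal{E}$ under composition. For the pushout of an inflation $A \rightarrowtail B$ along an arbitrary morphism $A \to C$, the strategy is to reindex so that both are levelwise, form the pushouts in $\mathcal{E}$ stagewise (these exist and yield inflations by the exactness axioms of $\mathcal{E}$), and use the standard pasting properties of pushouts in an exact category to see that they assemble into an $I_0$-system whose transition maps are again inflations. Taking ``${\varinjlim}$'' over $I_0$ produces the pushout in $\mathrm{Ind}(\mathcal{E})$, since filtered colimits commute with finite colimits; by construction the result lies in $\mathrm{C}\mathcal{E}$. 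Pullbacks of deflations are handled dually, using that sequential colimits of inflations preserve the kernels of deflations in $\mathrm{Ind}(\mathcal{E})$.

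The main obstacle is the combinatorial bookkeeping required to make several cofinal reindexings compatible at once, so that the relevant data really form a morphism of $I_0$-systems with conflations level by level. Because $I_0 = \omega$ is countable and each requirement amounts to choosing a single index beyond which a property holds, one arranges this by a diagonal argument, interleaving the finitely many compatibility constraints that appear in each axiom. Once this bookkeeping is made precise, each verification reduces to its counterpart in $\mathcal{E}$ together with the observation that ``${\varinjlim}$'' over $I_0$ of levelwise conflations is a conflation in the sense of the stated definition, which is exactly the content needed to close off the proof.
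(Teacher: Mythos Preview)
The paper does not give its own proof of this theorem: it is stated with attribution to Keller \cite{kellerchain}*{Appendix B} and no argument is supplied. So there is nothing in the present paper to compare your proposal against beyond the bare citation.

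That said, your outline is essentially the strategy Keller uses in his appendix: the reindexing/straightening lemma you describe is precisely his device for turning an arbitrary morphism in $\mathrm{C}\mathcal{E}$ into a levelwise one after a cofinal shift of $I_0$, and the axiom-by-axiom verification proceeds as you indicate. One point to be careful about is your sentence on pullbacks of deflations being ``handled dually'': the construction of $\mathrm{C}\mathcal{E}$ is manifestly asymmetric (colimits of \emph{inflations}), so there is no formal duality available. In Keller's treatment this axiom is instead checked directly by representing the deflation and the test map levelwise via the reindexing lemma, forming the levelwise pullbacks in $\mathcal{E}$, and then verifying that the resulting transition maps are again inflations (this last step uses the exact-category axioms in $\mathcal{E}$, not any dual argument in $\mathrm{C}\mathcal{E}$). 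If you flesh out that step explicitly rather than appealing to duality, your sketch becomes a correct outline of Keller's proof.
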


\begin{rem}
	It follows that the embedding $\mathcal{E} \to \mathrm{C}\mathcal{E}$ is exact.
\end{rem}

\begin{rem}
	In \cite{kellerchain}*{Appendix B}, the exact structure is actually defined on the category $\mathrm{Ind}'(\mathcal{E})$, but it defines an exact structure on the equivalent category $\mathrm{Ind}(\mathcal{E})$ as well.
\end{rem}

\subsection{Tensor exact categories}

\begin{dfn}
	A \emph{tensor exact category} is an exact category $\mathcal{E}$ equipped with a compatible symmetric monoidal structure $\otimes_{\mathcal{E}}$, i.e. the functors
	\[a \mapsto a \otimes_{\mathcal{E}} b\]
	are exact for all objects $b \in \mathcal{E}$.
	\label{dfntensorexactcat}
\end{dfn}

\begin{prop}
	For a tensor exact category $\mathcal{E}$, the countable envelope $\mathrm{C}\mathcal{E}$ naturally inherits the structure of a tensor exact category such that the embedding $\mathcal{E} \to \mathrm{C}\mathcal{E}$ is tensor exact.
	\label{propcetensex}
\end{prop}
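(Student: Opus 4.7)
The plan is to restrict the symmetric monoidal structure $\otimes_{\mathrm{I}}$ on $\mathrm{Ind}(\mathcal{E})$ constructed in Proposition~\ref{propindmonoidal} to the full subcategory $\mathrm{C}\mathcal{E}$, and then to check that the resulting structure is compatible with Keller's exact structure from Theorem~\ref{thmcountableexact}. The verification splits into four points: (i) closure of $\mathrm{C}\mathcal{E}$ under $\otimes_{\mathrm{I}}$; (ii) the unit object and the coherence isomorphisms already live in $\mathrm{C}\mathcal{E}$; (iii) the functor $A \otimes_{\mathrm{I}} -$ is exact on $\mathrm{C}\mathcal{E}$ for every $A \in \mathrm{C}\mathcal{E}$; and (iv) the embedding $\mathcal{E} \hookrightarrow \mathrm{C}\mathcal{E}$ is tensor exact.

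For (i), take $A, B \in \mathrm{C}\mathcal{E}$ and choose $\alpha, \beta: I_0 \to \mathcal{E}$ sending every arrow to an inflation such that $A \cong \text{``${\varinjlim}$''}\alpha$ and $B \cong \text{``${\varinjlim}$''}\beta$. By the construction of $\otimes_{\mathrm{I}}$, one has $A \otimes_{\mathrm{I}} B \cong \underset{I_0 \times I_0}{\text{``${\varinjlim}$''}}(\alpha \otimes_{\mathcal{E}} \beta)$, and cofinality of the diagonal $I_0 \to I_0 \times I_0$ (cf.\ \cite{kaschap}*{Corollary 3.2.3}) rewrites this as
\[ A \otimes_{\mathrm{I}} B \;\cong\; \underset{i \in I_0}{\text{``${\varinjlim}$''}} \bigl(\alpha(i) \otimes_{\mathcal{E}} \beta(i)\bigr). \]
The transition map from index $i$ to $i+1$ factors as
\[ \alpha(i) \otimes_{\mathcal{E}} \beta(i) \;\to\; \alpha(i+1) \otimes_{\mathcal{E}} \beta(i) \;\to\; \alpha(i+1) \otimes_{\mathcal{E}} \beta(i+1), \]
and since by Definition~\ref{dfntensorexactcat} the functors $- \otimes_{\mathcal{E}} b$ and $a \otimes_{\mathcal{E}} -$ are exact for every $a, b \in \mathcal{E}$, each of the two maps is an inflation. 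Inflations compose to inflations, so the resulting diagram witnesses $A \otimes_{\mathrm{I}} B \in \mathrm{C}\mathcal{E}$. Point (ii) is immediate: $\mathrm{C}\mathcal{E}$ is full in $\mathrm{Ind}(\mathcal{E})$ and the monoidal unit $h_{\mathcal{E}}(\mathbb{I}_{\mathcal{E}})$ lies in $\mathrm{C}\mathcal{E}$ by Remark~\ref{remyonedfactorscountable}.

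For (iii), fix $A \in \mathrm{C}\mathcal{E}$ written as $\text{``${\varinjlim}$''}\delta$ with $\delta$ as above, and a conflation $X \to Y \to Z$ in $\mathrm{C}\mathcal{E}$. By Theorem~\ref{thmcountableexact} we may assume it is presented by a level-wise conflation $\alpha(i) \to \beta(i) \to \gamma(i)$ in $\mathcal{E}$ with $X \cong \text{``${\varinjlim}$''}\alpha$, $Y \cong \text{``${\varinjlim}$''}\beta$, $Z \cong \text{``${\varinjlim}$''}\gamma$. Applying the diagonal rewriting of step~(i) term-by-term yields the identification
\[ A \otimes_{\mathrm{I}} X \to A \otimes_{\mathrm{I}} Y \to A \otimes_{\mathrm{I}} Z \;\cong\; \underset{i \in I_0}{\text{``${\varinjlim}$''}} \bigl(\delta(i) \otimes_{\mathcal{E}} \alpha(i) \to \delta(i) \otimes_{\mathcal{E}} \beta(i) \to \delta(i) \otimes_{\mathcal{E}} \gamma(i)\bigr). \]
Exactness of $\delta(i) \otimes_{\mathcal{E}} -$ makes each level a conflation in $\mathcal{E}$, while the transition maps are inflations by the argument of step~(i). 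Theorem~\ref{thmcountableexact} then certifies that the sequence above is a conflation in $\mathrm{C}\mathcal{E}$, which proves exactness of $A \otimes_{\mathrm{I}} -$. For (iv), the embedding $\mathcal{E} \hookrightarrow \mathrm{C}\mathcal{E}$ of Remark~\ref{remyonedfactorscountable} is exact by the remark following Theorem~\ref{thmcountableexact} and factors the monoidal embedding $h_{\mathcal{E}}$ from Proposition~\ref{propindmonoidal}, so it preserves the tensor product.

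The main technical obstacle is step~(i): one must replace the $I_0 \times I_0$-indexed formal colimit coming from the definition of $\otimes_{\mathrm{I}}$ by an $I_0$-indexed one and simultaneously keep track of inflations along the diagonal. Once this diagonal bookkeeping is in place, exactness in~(iii) follows essentially formally from Keller's level-wise characterisation of conflations combined with biexactness of $\otimes_{\mathcal{E}}$.
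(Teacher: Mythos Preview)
Your proof is correct and follows essentially the same approach as the paper's: restrict $\otimes_{\mathrm{I}}$ from $\mathrm{Ind}(\mathcal{E})$, use cofinality of the diagonal $I_0 \to I_0 \times I_0$ to rewrite the tensor product as an $I_0$-indexed formal colimit with inflation transition maps, and verify exactness of $A \otimes_{\mathrm{I}} -$ level-wise via Keller's description of conflations. You are slightly more explicit than the paper in spelling out the factorisation of the diagonal transition map as a composite of two inflations and in isolating points~(ii) and~(iv), but these are cosmetic differences rather than a different route.
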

\begin{proof}
	The symmetric monoidal structure on $\mathrm{C}\mathcal{E}$ is the restriction of the one on $\mathrm{Ind}(\mathcal{E})$ (see Proposition \ref{propindmonoidal}). For two functors $\alpha, \beta: I_0 \to \mathcal{E}$ with 
	\[\text{``${\varinjlim}$''} \alpha = A, \quad \text{``${\varinjlim}$''} \beta = B\] 
	we have by definition
	\[A \otimes_{\mathrm{I}} B = \underset{((U \to A),(V \to B)) \in \mathcal{E}_A \times \mathcal{E}_B}{\text{``${\varinjlim}$''}} U \otimes V \cong \underset{I_0 \times I_0}{\text{``${\varinjlim}$''}} \alpha \otimes \beta \cong \underset{I_0}{\text{``${\varinjlim}$''}} \alpha \otimes \beta\]
	where the first isomorphism follows from \cite{kaschap}*{Proposition 2.6.3 (ii)} and the second one follows as the diagonal functor $I_0\to I_0 \times I_0$ is cofinal (see \cite{kaschap}*{Corollary 3.2.3}). This proves that the tensor product of two objects in $\mathrm{C}\mathcal{E}$ is in $\mathrm{C}\mathcal{E}$ again. Indeed, the morphisms $\alpha(i) \otimes \beta(i) \to \alpha(j) \otimes \beta(j)$ are inflations for all objects $i,j \in I_0$ by the exactness property of $\otimes$.
	
	It remains to show that for $A \in \mathrm{C}\mathcal{E}$, the functor $A \otimes_{\mathrm{I}} -$ is exact. Let $\alpha,\beta,\gamma: I_0 \to \mathcal{E}$ be functors and $f: \alpha \to \beta, g: \beta \to \gamma$ be natural transformations such that 
	\[\alpha(i) \xrightarrow{f(i)} \beta(i) \xrightarrow{g(i)} \gamma(i)\] 
	is a conflation for all objects $i \in I_0$. If $A \cong \text{``${\varinjlim}$''} \delta$, then applying $A \otimes_{\mathrm{I}} -$ to the conflation
	\[\text{``${\varinjlim}$''} \alpha \xrightarrow{\text{``${\varinjlim}$''} f}  \text{``${\varinjlim}$''} \beta \xrightarrow{\text{``${\varinjlim}$''} g} \text{``${\varinjlim}$''} \gamma\]
	yields a sequence isomorphic to
	\[\text{``${\varinjlim}$''} \alpha \otimes \delta \xrightarrow{\text{``${\varinjlim}$''} f \otimes \id}  \text{``${\varinjlim}$''} \beta \otimes \delta \xrightarrow{\text{``${\varinjlim}$''} g \otimes \id} \text{``${\varinjlim}$''} \gamma \otimes \delta ~.\]
	As for all $i \in I_0$, the sequence
	\[\alpha(i) \otimes \delta(i) \xrightarrow{ f(i) \otimes \id} \beta(i) \otimes \delta(i) \xrightarrow{g(i) \otimes \id} \gamma(i) \otimes \delta(i)\]
	is a conflation by the exactness of the tensor product on $\mathcal{E}$, it follows that  $A \otimes_{\mathrm{I}} -$ is isomorphic to an exact functor and therefore exact itself.
\end{proof}

\begin{dfn}
	We say that a tensor exact category $\mathcal{E}$ satisfies \emph{the pushout product axiom} if for every two inflations $f: A \to B, g:C \to D$ in $\mathcal{E}$, the canonical morphism
	\[A \otimes D \coprod_{A \otimes C} B \otimes C \to B \otimes D\]
	\label{dfnpushoutprodaxiom}
	is an inflation.
\end{dfn}

Recall from \cite{buehlerexact}*{Example 13.11} that for any category $\mathcal{D}$ and an exact category $\mathcal{E}$, the category $\mathcal{E}^{\mathcal{D}}$ of functors $\mathcal{D} \to \mathcal{E}$ inherits an exact structure, where a sequence of natural transformations
\[F \to G \to H\]
is defined to be exact if $F(d) \to G(d) \to H(d)$ is exact in $\mathcal{E}$ for all objects $d \in \mathcal{D}$. We call this the \emph{pointwise exact structure on $\mathcal{E}^{\mathcal{D}}$}.
\begin{lma}
	Let $\mathcal{E}$ be a tensor exact category with tensor product $\otimes_{\mathcal{E}}$ and denote by $\tilde{\mathrm{C}}(\mathcal{E})$ the category of functors $\alpha: I_0 \to \mathcal{E}$, such that $\alpha$ maps all morphisms of $I_0$ to inflations, with the pointwise exact structure (see \cite{kellerchain}). Then $\tilde{\mathrm{C}}(\mathcal{E})$ with the pointwise tensor product $\otimes_{\tilde{\mathrm{C}}(\mathcal{E})}$ makes $\tilde{\mathrm{C}}(\mathcal{E})$ a tensor exact category. Furthermore, if $\mathcal{E}$ satisfies the pushout product axiom, then so does $\tilde{\mathrm{C}}(\mathcal{E})$. 
	\label{lmapushprodfunctcat}
\end{lma}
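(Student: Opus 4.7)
The claim splits into two parts: (a) that the pointwise tensor product endows $\tilde{\mathrm{C}}(\mathcal{E})$ with the structure of a tensor exact category, and (b) that the pushout product axiom is inherited. I would define the pointwise tensor product by $(\alpha \otimes_{\tilde{\mathrm{C}}(\mathcal{E})} \beta)(i) := \alpha(i) \otimes_{\mathcal{E}} \beta(i)$ with transition map $\alpha(i\!\to\!j) \otimes_{\mathcal{E}} \beta(i\!\to\!j)$, and take associator, unitor and commutator isomorphisms induced levelwise by those of $\mathcal{E}$; the unit object is the constant functor at the unit of $\mathcal{E}$.

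The first thing to check is that $\tilde{\mathrm{C}}(\mathcal{E})$ is closed under this tensor product, i.e.\ that for any arrow $i \to j$ in $I_0$ the morphism $\alpha(i) \otimes_{\mathcal{E}} \beta(i) \to \alpha(j) \otimes_{\mathcal{E}} \beta(j)$ is an inflation. I would factor it as
\[
\alpha(i) \otimes_{\mathcal{E}} \beta(i) \xrightarrow{\alpha(i\to j) \otimes \id} \alpha(j) \otimes_{\mathcal{E}} \beta(i) \xrightarrow{\id \otimes \beta(i\to j)} \alpha(j) \otimes_{\mathcal{E}} \beta(j),
\]
noting that both factors are inflations by exactness of $(-)\otimes_{\mathcal{E}} \beta(i)$ and $\alpha(j) \otimes_{\mathcal{E}}(-)$, and appealing to the fact that inflations are closed under composition. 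Exactness of $\alpha \otimes_{\tilde{\mathrm{C}}(\mathcal{E})}(-)$ is then immediate: a conflation in $\tilde{\mathrm{C}}(\mathcal{E})$ is by definition pointwise a conflation in $\mathcal{E}$, and pointwise tensoring with $\alpha(i)$ preserves conflations by tensor exactness of $\mathcal{E}$. The monoidal coherence diagrams commute pointwise because they do in $\mathcal{E}$, which completes (a).

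For (b), I would argue as follows. Given inflations $f: \alpha \to \beta$ and $g: \gamma \to \delta$ in $\tilde{\mathrm{C}}(\mathcal{E})$ (meaning pointwise inflations), the maps $\alpha \otimes \gamma \to \beta \otimes \gamma$ and $\alpha \otimes \gamma \to \alpha \otimes \delta$ are again pointwise inflations by the same argument as above. By the construction of the exact structure on $\tilde{\mathrm{C}}(\mathcal{E})$ in \cite{kellerchain}, the pushout of an inflation along an arbitrary morphism in $\tilde{\mathrm{C}}(\mathcal{E})$ exists and is computed pointwise. Thus the pointwise pushout
\[
P(i) := \alpha(i) \otimes \delta(i) \coprod_{\alpha(i) \otimes \gamma(i)} \beta(i) \otimes \gamma(i)
\]
assembles into an object of $\tilde{\mathrm{C}}(\mathcal{E})$, and the canonical map $P \to \beta \otimes \delta$ is pointwise the canonical map from the pushout product diagram in $\mathcal{E}$, which is an inflation by the pushout product axiom in $\mathcal{E}$. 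Since inflations in $\tilde{\mathrm{C}}(\mathcal{E})$ are detected pointwise, we conclude.

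\textbf{Main obstacle.} The delicate point, which must be borrowed from Keller's construction, is that the pointwise pushout indeed lies in $\tilde{\mathrm{C}}(\mathcal{E})$, i.e.\ that the induced transition maps $P(i) \to P(j)$ are inflations in $\mathcal{E}$. A direct verification proceeds by building $P(j)$ from $P(i)$ through two successive pushouts of inflations along $\beta(i) \to \beta(j)$ (extending the $\beta \otimes \gamma$-side) and $\delta(i) \to \delta(j)$ (extending the $\alpha \otimes \delta$-side), using that pushouts of inflations are inflations in the exact category $\mathcal{E}$; once this is in place everything else is a routine unpacking of the pointwise structure.
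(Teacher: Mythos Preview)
Your proposal is correct and follows essentially the same route as the paper: both parts reduce to the observation that the monoidal structure, the exact structure, and the relevant pushouts are all computed pointwise, so each axiom is inherited levelwise from $\mathcal{E}$. You are in fact more careful than the paper in two places---your explicit factorization of the transition map $\alpha(i)\otimes\beta(i)\to\alpha(j)\otimes\beta(j)$ as a composite of two inflations, and your discussion of why the pointwise pushout $P$ lands back in $\tilde{\mathrm{C}}(\mathcal{E})$---whereas the paper simply asserts these follow from ``the exactness properties of $\otimes_{\mathcal{E}}$'' and from Keller's construction of the pointwise exact structure, respectively.
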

\begin{proof}
	It is clear that $\tilde{\mathrm{C}}(\mathcal{E})$ inherits a symmetric monoidal structure from $\mathcal{E}$: the associator, unitor and commutator isomorphisms are all given pointwise by the symmetric monoidal structure on $\mathcal{E}$ and they satisfy the required coherence conditions as they are satisfied for $\otimes_{\mathcal{E}}$. Note that the exactness properties of $\otimes_{\mathcal{E}}$ show that for two functors $\alpha,\beta \in \tilde{\mathrm{C}}(\mathcal{E})$, their tensor product $a \otimes_{\tilde{\mathrm{C}}(\mathcal{E})} b$ is again a functor that maps all morphisms of $I_0$ to inflations. The exactness properties of $\otimes_{\mathcal{E}}$ also imply that $\otimes_{\tilde{\mathrm{C}}(\mathcal{E})}$ has them as well and thus $\tilde{\mathrm{C}}(\mathcal{E})$ together with $\otimes_{\tilde{\mathrm{C}}(\mathcal{E})}$ is indeed a tensor exact category.
	
	Now let us assume that $\mathcal{E}$ satisfies the pushout product axiom. As we can compute pushouts in $\tilde{\mathrm{C}}(\mathcal{E})$ pointwise, it follows that the map in question from Definition \ref{dfnpushoutprodaxiom} is pointwise an inflation and therefore an inflation in $\tilde{\mathrm{C}}(\mathcal{E})$ by definition of the exact structure.
\end{proof}

\begin{prop}
	Assume $\mathcal{E}$ satisfies the pushout product axiom. Then the same holds true for $\mathrm{C}\mathcal{E}$.
	\label{propcepushprod}
\end{prop}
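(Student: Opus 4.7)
The plan is to reduce the pushout product axiom in $\mathrm{C}\mathcal{E}$ to its analogue in the functor category $\tilde{\mathrm{C}}(\mathcal{E})$ (Lemma \ref{lmapushprodfunctcat}) and then push the result through the $\text{``}\varinjlim\text{''}$ functor. Given inflations $f:A\to B$ and $g:C\to D$ in $\mathrm{C}\mathcal{E}$, Theorem \ref{thmcountableexact} allows us to present them, up to isomorphism in $\mathrm{C}\mathcal{E}$, as $f\cong\text{``}\varinjlim\text{''} f_{\bullet}$ and $g\cong\text{``}\varinjlim\text{''} g_{\bullet}$ where $f_{\bullet}:\alpha\to\beta$ and $g_{\bullet}:\gamma\to\delta$ are natural transformations of functors $I_0\to\mathcal{E}$ in $\tilde{\mathrm{C}}(\mathcal{E})$ that are pointwise inflations in $\mathcal{E}$. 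In particular $f_{\bullet}$ and $g_{\bullet}$ are inflations with respect to the pointwise exact structure on $\tilde{\mathrm{C}}(\mathcal{E})$.

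By Lemma \ref{lmapushprodfunctcat}, the pushout product morphism
\[\alpha\otimes_{\tilde{\mathrm{C}}(\mathcal{E})}\delta\coprod_{\alpha\otimes_{\tilde{\mathrm{C}}(\mathcal{E})}\gamma}\beta\otimes_{\tilde{\mathrm{C}}(\mathcal{E})}\gamma\longrightarrow\beta\otimes_{\tilde{\mathrm{C}}(\mathcal{E})}\delta\]
is an inflation in $\tilde{\mathrm{C}}(\mathcal{E})$, i.e.\ pointwise an inflation in $\mathcal{E}$ at every index of $I_0$, with the transition morphisms themselves inflations. Applying $\text{``}\varinjlim\text{''}$ therefore produces an inflation in $\mathrm{C}\mathcal{E}$, directly by the description of the exact structure in Theorem \ref{thmcountableexact}.

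It remains to identify the resulting inflation with the pushout product map of $f$ and $g$ in $\mathrm{C}\mathcal{E}$. Two compatibilities are needed. First, $\text{``}\varinjlim\text{''}$ commutes with $\otimes_{\mathrm{I}}$: this is the pointwise formula $A\otimes_{\mathrm{I}} B\cong\text{``}\varinjlim\text{''}_{I_0}(\alpha\otimes\beta)$ established in the proof of Proposition \ref{propcetensex} via cofinality of the diagonal $I_0\to I_0\times I_0$. Second, and this is the main obstacle, the pushout in $\mathrm{C}\mathcal{E}$ along the inflation $\mathrm{id}_A\otimes_{\mathrm{I}} g$ must be computable as the $\text{``}\varinjlim\text{''}$ of the pointwise pushouts in $\mathcal{E}$. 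I would handle this by noting that filtered colimits in $\hat{\mathcal{E}}$ commute with finite colimits, so the pointwise pushouts (which lie in $\mathcal{E}$ and whose transition maps are inflations, hence assemble into an object of $\tilde{\mathrm{C}}(\mathcal{E})$) compute the pushout in $\mathrm{Ind}(\mathcal{E})$; exactness of the inclusion $\mathcal{E}\hookrightarrow\mathrm{C}\mathcal{E}$ from Theorem \ref{thmcountableexact} then shows this pushout agrees with the one in $\mathrm{C}\mathcal{E}$. With these two compatibilities in place, the inflation produced by $\text{``}\varinjlim\text{''}$ is canonically isomorphic to the pushout product morphism of $f$ and $g$ in $\mathrm{C}\mathcal{E}$, which proves the axiom.
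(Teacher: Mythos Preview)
Your proposal is correct and follows essentially the same strategy as the paper: lift the inflations to $\tilde{\mathrm{C}}(\mathcal{E})$ via Theorem~\ref{thmcountableexact}, apply Lemma~\ref{lmapushprodfunctcat} there, and push the resulting inflation through $\text{``}\varinjlim\text{''}$ using that this functor is exact and commutes with the tensor product. The only difference is in how you justify that $\text{``}\varinjlim\text{''}$ preserves the relevant pushout: you argue via commutation of filtered colimits with finite colimits in $\hat{\mathcal{E}}$, whereas the paper simply invokes the general fact that exact functors of exact categories preserve pushouts along inflations (\cite{buehlerexact}*{Proposition 5.2}), which is more direct and avoids having to separately check that the result lands in $\mathrm{C}\mathcal{E}$.
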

\begin{proof}
	Let us first remark that by Lemma \ref{lmapushprodfunctcat}, the functor category $\tilde{\mathrm{C}}(\mathcal{E})$ satisfies the pushout-product axiom. Furthermore, it is an immediate consequence of the definition of the exact structure on $\mathrm{C}\mathcal{E}$ and the tensor product $\otimes_{\mathrm{C}\mathcal{E}}$ that the functor 
	\[\text{``${\varinjlim}$''}:\tilde{\mathrm{C}}(\mathcal{E}) \to \mathrm{C}\mathcal{E}\] 
	is exact and preserves tensor products.
	
	Now, let $f:A \to A'$, $g:B \to B'$ be two inflations in $\mathrm{C}\mathcal{E}$. This means that there exist inflations $f': \alpha \to \alpha'$ and $g': \beta \to \beta'$ in $\tilde{\mathrm{C}}\mathcal{E}$ such that $f \cong \text{``${\varinjlim}$''}(f')$ and $g \cong \text{``${\varinjlim}$''}(g')$ (see Theorem \ref{thmcountableexact}). Look at the pushout diagram in $\tilde{\mathrm{C}}(\mathcal{E})$
	\[
	\xymatrix{
		\alpha \otimes_{\tilde{\mathrm{C}}\mathcal{E}} \beta \ar[r]^{\id \otimes g'} \ar[d]_{f' \otimes \id} & \alpha \otimes_{\tilde{\mathrm{C}}\mathcal{E}} \beta' \ar[d] \ar@/^1pc/[rdd]^{\id \otimes g'}& \\
		\alpha' \otimes_{\tilde{\mathrm{C}}\mathcal{E}} \beta \ar[r] \ar@/_/[rrd]_{f' \otimes \id} & \alpha' \otimes_{\tilde{\mathrm{C}}\mathcal{E}} \beta \coprod\limits_{\alpha \otimes_{\tilde{\mathrm{C}}\mathcal{E}} \beta} \alpha \otimes_{\tilde{\mathrm{C}}\mathcal{E}} \beta' \ar@{.>}[rd]^{h'} & \\
		&& \alpha' \otimes_{\tilde{\mathrm{C}}\mathcal{E}} \beta'
	}
	\]
	where $h'$ is an inflation as ${\tilde{\mathrm{C}}}\mathcal{E}$ satisfies the pushout product axiom. We now apply the functor $\text{``${\varinjlim}$''}$ to this diagram. As exact functors preserve pushouts along inflations (see \cite{buehlerexact}*{Proposition 5.2}) and $\text{``${\varinjlim}$''}$ commutes with the tensor products, we obtain a pushout diagram isomorphic to
	\[
	\xymatrix{
		A \otimes_{\mathrm{C}\mathcal{E}} B  \ar[r]^{\id \otimes g} \ar[d]_{f \otimes \id} & A \otimes_{\mathrm{C}\mathcal{E}} B' \ar[d] \ar@/^1pc/[rdd]^{\id \otimes g}& \\
		A' \otimes_{\mathrm{C}\mathcal{E}} B \ar[r] \ar@/_/[rrd]_{f \otimes \id} & A' \otimes_{\mathrm{C}\mathcal{E}} B \coprod\limits_{A \otimes_{\mathrm{C}\mathcal{E}} B} A \otimes_{\mathrm{C}\mathcal{E}} B' \ar@{.>}[rd]^{h} & \\
		&& A' \otimes_{\mathrm{C}\mathcal{E}} B'
	}
	\]
	where $h$ is an inflation as $\text{``${\varinjlim}$''}$ is exact. This finishes the proof.
\end{proof}

\subsection{Tensor Frobenius pairs}
Recall that a Frobenius category is an exact category with enough injective objects, such that the class of injective and projective objects coincide.

\begin{dfn}[see \cite{schlichtingnegative}*{Section 3.4}] 
	A \emph{Frobenius pair} $\mathbf{E}=(\mathcal{E},\mathcal{E}_0)$ is a strictly full, faithful and exact inclusion of Frobenius categories $\mathcal{E}_0 \hookrightarrow \mathcal{E}$ such that the projective-injective objects of $\mathcal{E}_0$ are mapped to the projective-injective objects of~$\mathcal{E}$.
	\label{dfnfrobpair}
\end{dfn}

We now give a symmetric monoidal version of Definition \ref{dfnfrobpair}.
\begin{dfn}
	A \emph{tensor Frobenius pair} $\mathbf{E}=(\mathcal{E},\mathcal{E}_0, \otimes)$ consists of a Frobenius pair $(\mathcal{E},\mathcal{E}_0)$ and a symmetric monoidal structure on $\mathcal{E}$ with tensor product $\otimes$, that makes $\mathcal{E}$ a tensor exact category and satisfies the following properties:
	\begin{enumerate}[label=(\roman*)]
		\item For all objects $A \in \mathcal{E}$, the functor $A \otimes -$ preserves the projective/injective objects of $\mathcal{E}$.
		\item $\mathcal{E}_0$ is a $\otimes$-ideal in $\mathcal{E}$, i.e.\ it is stable under tensoring with any object of $\mathcal{E}$. \label{axtensorideal}
		\item The tensor exact category $\mathcal{E}$ satisfies the pushout product axiom.
	\end{enumerate}
	\label{dfntensorfrobpair}
\end{dfn}

\begin{rem}
	In many examples, $\mathcal{E}$ will be a category of chain complexes over some exact category and $\mathcal{E}_0$ the subcategory of acyclic complexes. From this point of view, requiring that $\mathcal{E}_0$ is a $\otimes$-ideal says that $\otimes$ passes directly to the corresponding derived category.
	
	The pushout product axiom is there to make sure that $\otimes$ induces a product in the Waldhausen $\mathrm{K}$-theory of the Frobenius pair (see Lemma \ref{tensorwaldexact}).
\end{rem}

\begin{rem}
	Here is an example where axiom \ref{axtensorideal} of Definition \ref{dfntensorfrobpair} is \emph{not} satisfied: let $R\Mod$ be the abelian category of finitely generated modules over a commutative noetherian ring $R$ such that $\otimes_R$ is not an exact functor (i.e.\ $R$ is not absolutely flat). Consider $\mathrm{C^b}(R\Mod)$, the exact category of bounded chain complexes of finitely generated $R$-modules, with conflations the degree-wise split ones and $\mathrm{aC^b(R\Mod)}$, the exact subcategory of acyclic complexes. Then
	$(\mathrm{C^b}(R\Mod),\mathrm{aC^b(R\Mod)})$ is a Frobenius pair and the tensor product of chain complexes $\otimes_R$ makes this example almost a tensor Frobenius pair. However, $\mathrm{aC^b(R\Mod)}$ is not a tensor ideal as $\otimes_R$ is not exact.
\end{rem}

If $\mathcal{E}$ is a Frobenius category, $\mathrm{C}\mathcal{E}$ is one as well, with the exact structure from Theorem \ref{thmcountableexact}, according to \cite{schlichtingnegative}*{Section 4}. It follows that for a Frobenius pair $\mathbf{E}=(\mathcal{E},\mathcal{E}_0)$, its countable envelope 
$\mathrm{C}\mathbf{E}:=(\mathrm{C}\mathcal{E},\mathrm{C}\mathcal{E}_0)$ is again a Frobenius pair. We want to prove an analogous statement for tensor Frobenius pairs.

\begin{thm}
	Let $\mathbf{E} = (\mathcal{E},\mathcal{E}_0,\otimes)$ be a tensor Frobenius pair. Then its \emph{countable envelope} $\mathrm{C}\mathbf{E} := (\mathrm{C}\mathcal{E},\mathrm{C}\mathcal{E}_0,\otimes_{\mathrm{I}})$ is a tensor Frobenius pair.
	\label{thmceistfrobpair}
\end{thm}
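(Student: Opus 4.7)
The plan is to verify only what is not yet in hand. The underlying Frobenius pair structure on $\mathrm{C}\mathbf{E} = (\mathrm{C}\mathcal{E},\mathrm{C}\mathcal{E}_0)$ is established in \cite{schlichtingnegative}*{Section 4}, the tensor exact structure with product $\otimes_{\mathrm{I}}$ is Proposition \ref{propcetensex}, and the pushout product axiom is Proposition \ref{propcepushprod}. So the two remaining clauses of Definition \ref{dfntensorfrobpair} to check are: (a) $\mathrm{C}\mathcal{E}_0$ is a $\otimes_{\mathrm{I}}$-ideal in $\mathrm{C}\mathcal{E}$, and (b) for each $A \in \mathrm{C}\mathcal{E}$, the functor $A \otimes_{\mathrm{I}} -$ preserves projective-injective objects.

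The ideal property (a) is a short direct calculation. Given $A \cong \text{``${\varinjlim}$''}\alpha$ with $\alpha : I_0 \to \mathcal{E}_0$ sending morphisms to inflations, and $B \cong \text{``${\varinjlim}$''}\beta$ in $\mathrm{C}\mathcal{E}$, the description of $\otimes_{\mathrm{I}}$ established in Proposition \ref{propcetensex} gives $A \otimes_{\mathrm{I}} B \cong \text{``${\varinjlim}$''}_i\bigl(\alpha(i) \otimes \beta(i)\bigr)$. Each $\alpha(i) \otimes \beta(i)$ lies in $\mathcal{E}_0$ since $\mathcal{E}_0$ is a $\otimes$-ideal in $\mathcal{E}$, and the transition to $\alpha(i+1) \otimes \beta(i+1)$ factors as the composite of two inflations $\alpha(i) \otimes \beta(i) \rightarrowtail \alpha(i+1) \otimes \beta(i) \rightarrowtail \alpha(i+1) \otimes \beta(i+1)$ obtained by tensoring the inflations $\alpha(i) \rightarrowtail \alpha(i+1)$ and $\beta(i) \rightarrowtail \beta(i+1)$ with a fixed factor, using exactness of $x \otimes -$. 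Hence $A \otimes_{\mathrm{I}} B \in \mathrm{C}\mathcal{E}_0$.

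For the preservation of projective-injectives (b), I would invoke the description of the proj-injs in $\mathrm{C}\mathcal{E}$ extracted from \cite{kellerchain}*{Appendix B} and \cite{schlichtingnegative}*{Section 4}: they are precisely the direct summands of countable coproducts $\bigoplus_{n \geq 0} P_n$ with each $P_n$ proj-inj in $\mathcal{E}$. Since $A \otimes_{\mathrm{I}} -$ is additive and commutes with $\text{``${\varinjlim}$''}$-constructions in its argument, it commutes with countable coproducts in $\mathrm{C}\mathcal{E}$, so one is reduced to showing that $A \otimes_{\mathrm{I}} P$ is proj-inj in $\mathrm{C}\mathcal{E}$ whenever $P \in \mathcal{E}$ is proj-inj. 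For this, write $A = \text{``${\varinjlim}$''}\alpha$ and let $C_i$ denote the cokernel of $\alpha(i) \rightarrowtail \alpha(i+1)$. Tensoring the conflation $\alpha(i) \rightarrowtail \alpha(i+1) \twoheadrightarrow C_i$ with $P$ yields a conflation $\alpha(i) \otimes P \rightarrowtail \alpha(i+1) \otimes P \twoheadrightarrow C_i \otimes P$ in $\mathcal{E}$ whose quotient $C_i \otimes P$ is proj-inj (because $- \otimes P$ preserves proj-injs in $\mathcal{E}$), and in particular injective, so the conflation splits. Consequently $A \otimes_{\mathrm{I}} P = \text{``${\varinjlim}$''}_i(\alpha(i) \otimes P)$ is a colimit in $\mathrm{C}\mathcal{E}$ of proj-inj objects of $\mathcal{E}$ along split inflations, which assembles into the countable coproduct $(\alpha(0) \otimes P) \oplus \bigoplus_{i \geq 0}(C_i \otimes P)$ of proj-inj objects of $\mathcal{E}$, hence is proj-inj in $\mathrm{C}\mathcal{E}$.

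The main obstacle I expect is the first step of part (b): firmly pinning down the characterization of the proj-injs of $\mathrm{C}\mathcal{E}$ and confirming that $\otimes_{\mathrm{I}}$ commutes with countable coproducts in $\mathrm{C}\mathcal{E}$ (the latter should be essentially immediate from the definition via $\text{``${\varinjlim}$''}$). Once these are in hand, both the ideal property and the splitting argument fall out directly from the tensor Frobenius pair axioms assumed for $\mathbf{E}$.
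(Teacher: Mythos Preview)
Your proposal is correct and follows the same overall structure as the paper's proof: cite \cite{schlichtingnegative} for the Frobenius pair, Propositions \ref{propcetensex} and \ref{propcepushprod} for the tensor exact structure and pushout product axiom, then verify the ideal property and proj-inj preservation directly. Part (a) is essentially identical to the paper's argument.

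For part (b), your route is valid but more circuitous than the paper's. The paper uses the same characterization of proj-injs in $\mathrm{C}\mathcal{E}$ (direct summands of $\text{``${\varinjlim}$''}\iota$ with $\iota: I_0 \to \mathcal{E}\text{-prinj}$), but instead of reducing to a single $P \in \mathcal{E}$ via commutation with coproducts and then running a splitting argument, it simply computes $\bigl(\text{``${\varinjlim}$''}\alpha\bigr) \otimes_{\mathrm{I}} \bigl(\text{``${\varinjlim}$''}\iota\bigr) \cong \text{``${\varinjlim}$''}(\alpha \otimes \iota)$ and observes that $\alpha \otimes \iota$ already takes values in $\mathcal{E}\text{-prinj}$ because each $\alpha(i) \otimes -$ preserves proj-injs. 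This one-line computation makes your splitting of the transitions $\alpha(i) \otimes P \rightarrowtail \alpha(i+1) \otimes P$ and the reassembly into a coproduct unnecessary. (Also, a small phrasing issue: you wrote that $C_i \otimes P$ is proj-inj ``because $- \otimes P$ preserves proj-injs''; the correct reason is that $C_i \otimes -$ preserves proj-injs and $P$ is proj-inj.) Your detour works, but the paper's direct computation is shorter and avoids the auxiliary checks you flagged as obstacles.
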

\begin{proof}
	We know that $(\mathrm{C}\mathcal{E},\mathrm{C}\mathcal{E}_0)$ is a Frobenius pair and Proposition \ref{propcetensex} gives a symmetric monoidal structure on $\mathrm{C}\mathcal{E}$ with tensor product $\otimes_{\mathrm{I}}$ that makes $\mathrm{C}\mathcal{E}$ a tensor exact category. Furthermore, $\mathrm{C}\mathcal{E}$ will satisfy the pushout product axiom by Proposition \ref{propcepushprod}. 
	
	In order to show that $\mathrm{C}\mathcal{E}_0$ is a $\otimes_{\mathrm{I}}$-ideal in $\mathrm{C}\mathcal{E}$, let $A \cong \text{``${\varinjlim}$''} \alpha, B \cong \text{``${\varinjlim}$''} \beta$ for two functors $\alpha: I_0 \to \mathcal{E}, \beta: I_0 \to \mathcal{E}_0$. Then
	\[A \otimes_{\mathrm{I}} B \cong \text{``${\varinjlim}$''} \alpha \otimes \beta \]
	and as $\mathcal{E}_0$ is a $\otimes$-ideal in $\mathcal{E}$, it follows that $\alpha \otimes \beta$ has image $\mathcal{E}_0$ and thus $A \otimes_{\mathrm{I}} B \in \mathrm{C}\mathcal{E}_0$.
	
	It remains to prove that $A \otimes_{\mathrm{I}} -$ preserves the projective-injective objects of $\mathrm{C}\mathcal{E}$ which are given as direct summands of objects isomorphic to $\text{``${\varinjlim}$''} \iota$ where $\iota: I_0 \to \mathcal{E}\mathrm{-prinj}$ takes values in the full subcategory of projective-injective objects of $\mathcal{E}$ (see \cite{schlichtingnegative}*{Definition 4.3}). For such $\iota$ and any $\text{``${\varinjlim}$''} \alpha \in \mathrm{C}\mathcal{E}$ we have
	\[\left(\text{``${\varinjlim}$''} \alpha \right) \otimes_{\mathrm{I}} \left(\text{``${\varinjlim}$''} \iota\right) \cong \left(\text{``${\varinjlim}$''} \alpha \otimes \iota \right)\]
	and as $\mathcal{E}$ is a tensor Frobenius pair we see that the functor $\alpha \otimes \iota$ takes values in $\mathcal{E}\mathrm{-prinj}$. Thus for any $A \in \mathrm{C}\mathcal{E}$, $A \otimes_{I} - $ preserves objects isomorphic to $\text{``${\varinjlim}$''} \alpha$ where $\alpha: I_0 \to \mathcal{E}\mathrm{-prinj}$. As it is an additive functor it also preserves their direct summands. We conclude that $A \otimes_{\mathrm{I}} -$ preserves the projective-injective objects of $\mathrm{C}\mathcal{E}$ which finishes the proof.
\end{proof}

\end{appendices}

\begin{bibdiv}
	\addcontentsline{toc}{section}{References}
	\begin{biblist}
		
		\bib{SGA4-1}{book}{
			author={Artin, Michael},
			author={Grothendieck, Alexander},
			author={Verdier, Jean-Louis},
			title={Th\'eorie de topos et cohomologie \'etale des schemas {I}},
			series={Lecture Notes in Mathematics},
			publisher={Springer},
			date={1971},
			volume={269},
		}
		
		\bib{balmer2005spectrum}{article}{
			author={Balmer, Paul},
			title={The spectrum of prime ideals in tensor triangulated categories},
			date={2005},
			ISSN={0075-4102},
			journal={J. Reine Angew. Math.},
			volume={588},
			pages={149\ndash 168},
			url={http://dx.doi.org/10.1515/crll.2005.2005.588.149},
		}
		
		\bib{balmerfiltrations}{article}{
			author={Balmer, Paul},
			title={Supports and filtrations in algebraic geometry and modular
				representation theory},
			date={2007},
			ISSN={0002-9327},
			journal={Amer. J. Math.},
			volume={129},
			number={5},
			pages={1227\ndash 1250},
			url={http://dx.doi.org/10.1353/ajm.2007.0030},
		}
		
		\bib{balmer2010tensor}{inproceedings}{
			author={Balmer, Paul},
			title={Tensor triangular geometry},
			date={2010},
			booktitle={Proceedings of the {I}nternational {C}ongress of
				{M}athematicians. {V}olume {II}},
			publisher={Hindustan Book Agency},
			address={New Delhi},
			pages={85\ndash 112},
		}
		
		\bib{balmerchow}{article}{
			author={Balmer, Paul},
			title={Tensor triangular {C}how groups},
			date={2013},
			ISSN={0393-0440},
			journal={J. Geom. Phys.},
			volume={72},
			pages={3\ndash 6},
			url={http://dx.doi.org/10.1016/j.geomphys.2013.03.017},
		}
		
		\bib{balschlichidem}{article}{
			author={Balmer, Paul},
			author={Schlichting, Marco},
			title={Idempotent completion of triangulated categories},
			date={2001},
			ISSN={0021-8693},
			journal={J. Algebra},
			volume={236},
			number={2},
			pages={819\ndash 834},
			url={http://dx.doi.org/10.1006/jabr.2000.8529},
		}
		
		\bib{bencarlrick}{article}{
			author={Benson, David J.},
			author={Carlson, Jon F.},
			author={Rickard, Jeremy},
			title={Thick subcategories of the stable module category},
			journal={Fund. Math.},
			volume={153},
			date={1997},
			number={1},
			pages={59--80},
			issn={0016-2736},
		}
		
		\bib{sga6}{book}{
			editor={Berthelot, Pierre},
			editor={Grothendieck, Alexander},
			editor={Illusie., Luc},
			title={Th\'eorie des intersections et th\'eor\`eme de {R}iemann-{R}och},
			series={Lecture Notes in Mathematics, Vol. 225},
			publisher={Springer-Verlag},
			address={Berlin},
			date={1971},
			note={S{\'e}minaire de G{\'e}om{\'e}trie Alg{\'e}brique du Bois-Marie
				1966--1967 (SGA 6)},
		}
		
		\bib{bredonsheaf}{book}{
			author={Bredon, Glen~E.},
			title={Sheaf theory},
			publisher={McGraw-Hill Book Co.},
			address={New York},
			date={1967},
		}
		
		\bib{bkssupport}{article}{
			author={Buan, Aslak~Bakke},
			author={Krause, Henning},
			author={Solberg, {\O}yvind},
			title={Support varieties: an ideal approach},
			date={2007},
			ISSN={1532-0073},
			journal={Homology, Homotopy Appl.},
			volume={9},
			number={1},
			pages={45\ndash 74},
			url={http://projecteuclid.org/getRecord?id=euclid.hha/1175791087},
		}
		
		\bib{buehlerexact}{article}{
			author={B{\"u}hler, Theo},
			title={Exact categories},
			date={2010},
			ISSN={0723-0869},
			journal={Expo. Math.},
			volume={28},
			number={1},
			pages={1\ndash 69},
			url={http://dx.doi.org/10.1016/j.exmath.2009.04.004},
		}
		
		\bib{dayclosedcats}{article}{
			author={Day, Brian},
			title={On closed categories of functors},
			conference={
				title={Reports of the Midwest Category Seminar, IV},
			},
			book={
				series={Lecture Notes in Mathematics, Vol. 137},
				publisher={Springer, Berlin},
			},
			date={1970},
			pages={1--38},
		}
		
		\bib{delignecattak}{article}{
			author={Deligne, Pierre},
			title={Cat\'egories tannakiennes},
			conference={
				title={The Grothendieck Festschrift, Vol.\ II},
			},
			book={
				series={Progr. Math.},
				volume={87},
				publisher={Birkh\"auser Boston, Boston, MA},
			},
			date={1990},
			pages={111--195},
		}
		
		\bib{graysonproduct}{article}{
			author={Grayson, Daniel~R.},
			title={Products in {K}-theory and intersecting algebraic cycles},
			date={1978},
			ISSN={0020-9910},
			journal={Invent. Math.},
			volume={47},
			number={1},
			pages={71\ndash 83},
		}
		
		\bib{haiemb}{article}{
			author={H{\'a}i, Ph{\`u}ng~H{\^{o}}},
			title={An embedding theorem for abelian monoidal categories},
			date={2002},
			ISSN={0010-437X},
			journal={Compositio Math.},
			volume={132},
			number={1},
			pages={27\ndash 48},
			url={http://dx.doi.org/10.1023/A:1016076714394},
		}
		
		\bib{kaschap}{book}{
			author={Kashiwara, Masaki},
			author={Schapira, Pierre},
			title={Categories and sheaves},
			series={Grundlehren der Mathematischen Wissenschaften},
			publisher={Springer-Verlag},
			address={Berlin},
			date={2006},
			volume={332},
			ISBN={978-3-540-27949-5; 3-540-27949-0},
		}
		
		\bib{kellerchain}{article}{
			author={Keller, Bernhard},
			title={Chain complexes and stable categories},
			date={1990},
			ISSN={0025-2611},
			journal={Manuscripta Math.},
			volume={67},
			number={4},
			pages={379\ndash 417},
			url={http://dx.doi.org/10.1007/BF02568439},
		}
		
		\bib{kellerderuse}{article}{
			author={Keller, Bernhard},
			title={Derived categories and their uses},
			conference={
				title={Handbook of algebra, Vol.\ 1},
			},
			book={
				publisher={North-Holland, Amsterdam},
			},
			date={1996},
			pages={671--701},
		}
		
		\bib{kleinchow}{misc}{
			author={Klein, Sebastian},
			title={Chow groups for tensor triangulated categories},
			date={2014},
			note={preprint, arXiv:1301.0707v2 [math.AG]},
		}
		
		\bib{maclanecwm}{book}{
			author={Mac~Lane, Saunders},
			title={Categories for the working mathematician},
			edition={Second},
			series={Graduate Texts in Mathematics},
			publisher={Springer-Verlag, New York},
			date={1998},
			volume={5},
			ISBN={0-387-98403-8},
		}
		
		\bib{panin}{article}{
			author={Panin, Ivan A.},
			title={The equicharacteristic case of the {G}ersten conjecture},
			date={2003},
			ISSN={0371-9685},
			journal={Tr. Mat. Inst. Steklova},
			volume={241},
			number={Teor. Chisel, Algebra i Algebr. Geom.},
			pages={169\ndash 178},
		}
		
		\bib{quillenhigher}{article}{
			author={Quillen, Daniel},
			title={Higher algebraic $K$-theory. I},
			conference={
				title={Algebraic K-theory, I: Higher K-theories},
				address={Proc. Conf., Battelle Memorial Inst., Seattle, Wash.},
				date={1972},
			},
			book={
				publisher={Springer, Berlin},
			},
			date={1973},
			pages={85--147. Lecture Notes in Math., Vol. 341},
		}
		
		\bib{schlichtingknote}{article}{
			author={Schlichting, Marco},
			title={A note on {K}-theory and triangulated categories},
			date={2002},
			ISSN={0020-9910},
			journal={Invent. Math.},
			volume={150},
			number={1},
			pages={111\ndash 116},
			url={http://dx.doi.org/10.1007/s00222-002-0231-1},
		}
		
		\bib{schlichtingnegative}{article}{
			author={Schlichting, Marco},
			title={Negative {K}-theory of derived categories},
			date={2006},
			ISSN={0025-5874},
			journal={Math. Z.},
			volume={253},
			number={1},
			pages={97\ndash 134},
			url={http://dx.doi.org/10.1007/s00209-005-0889-3},
		}

		\bib{thomasonclassification}{article}{
			author={Thomason, Robert~W.},
			title={The classification of triangulated subcategories},
			date={1997},
			journal={Compositio Mathematica},
			volume={105},
			number={1},
			pages={1\ndash 27},
		}
		
		\bib{thomason-trobaugh}{article}{
			author={Thomason, Robert W.},
			author={Trobaugh, Thomas},
			title={Higher algebraic K-theory of schemes and of derived categories},
			conference={
				title={The Grothendieck Festschrift, Vol.\ III},
			},
			book={
				series={Progr. Math.},
				volume={88},
				publisher={Birkh\"auser Boston, Boston, MA},
			},
			date={1990},
			pages={247--435},
		}
		
		\bib{waldgenprods}{article}{
			author={Waldhausen, Friedhelm},
			title={Algebraic K-theory of generalized free products. I},
			journal={Ann. of Math. (2)},
			volume={108},
			date={1978},
			number={1},
			pages={135--204},
			issn={0003-486X},
		}
		
		\bib{waldktheory}{article}{
			author={Waldhausen, Friedhelm},
			title={Algebraic K-theory of spaces},
			conference={
				title={Algebraic and geometric topology},
				address={New Brunswick, N.J.},
				date={1983},
			},
			book={
				series={Lecture Notes in Math.},
				volume={1126},
				publisher={Springer, Berlin},
			},
			date={1985},
			pages={318--419},
		}

	\end{biblist}
\end{bibdiv}
\small{\textsc{Sebastian Klein, Departement Wiskunde-Informatica, Universiteit Antwerpen, Middelheimcampus, Middelheimlaan 1, 2020 Antwerp, Belgium}\\
\textit{E-mail address:} \texttt{sebastian.klein@uantwerpen.be}}

\end{document}